\newtheorem{theorem}{Theorem}[section]
\newtheorem{lemma}[theorem]{Lemma}
\newtheorem{corollary}[theorem]{Corollary}
\newtheorem{proposition}[theorem]{Proposition}
\theoremstyle{definition}
\newtheorem{definition}[theorem]{Definition}
\newtheorem{example}[theorem]{Example}
\newtheorem{examples}[theorem]{Examples}
\theoremstyle{remark}
\newtheorem{remark}[theorem]{Remark}
 \DeclareMathOperator{\Hom}{Hom}
\DeclareMathOperator{\Ext}{Ext} \DeclareMathOperator{\End}{End}
\DeclareMathOperator{\Tor}{Tor} 
\newcommand{\Dcal}{\ensuremath{\mathcal{D}}}
\newcommand{\Xcal}{\ensuremath{\mathcal{X}}}
\newcommand{\Tcal}{\ensuremath{\mathcal{T}}}
\newcommand{\Gcal}{\ensuremath{\mathcal{G}}}
\newcommand{\Fcal}{\ensuremath{\mathcal{F}}}
\newcommand{\Ccal}{\ensuremath{\mathcal{C}}}
\newcommand{\Nbb}{\ensuremath{\mathbb{N}}}
\newcommand{\Acal}{\ensuremath{\mathcal{A}}}
\newcommand{\Wcal}{\ensuremath{\mathcal{W}}}
\newcommand{\Scal}{\ensuremath{\mathcal{S}}}
\newcommand{\Ucal}{\ensuremath{\mathcal{U}}}
\newcommand{\Vcal}{\ensuremath{\mathcal{V}}}
\newcommand{\D}{{\rm D}}
\newcommand{\Kbb}{\mathbb{K}}
\newcommand{\Z}{\mathbb{Z}}
\newcommand{\ra}{\rightarrow}
\DeclareMathOperator{\Ker}{Ker} 
\DeclareMathOperator{\Coker}{Coker}
\numberwithin{equation}{section}
\newcommand{\Spec}[1]{\mbox{\rm{Spec}}(#1)}
\newcommand{\Add}{\mbox{\rm{Add\,}}}
\newcommand{\Prod}{\mbox{\rm{Prod\,}}}
\newcommand{\Gen}{\mbox{\rm{Gen\,}}}
\newcommand{\gen}{\mbox{\rm{gen\,}}}
\newcommand{\Cogen}{\mbox{\rm{Cogen\,}}}
\newcommand{\im}{\mbox{\rm{Im\,}}}
\newcommand{\rfmod}[1]{\mbox{\rm{mod}-}{#1}}
\newcommand{\rmod}[1]{\mbox{\rm{Mod}-}{#1}}
\newcommand{\lfmod}[1]{{#1}\mbox{-\rm{mod}}}
\newcommand{\lmod}[1]{{#1}\mbox{-\rm{Mod}}}
\newcommand{\ModA}{\ensuremath\mbox{\rm{Mod}-$A$}}
\newcommand{\modA}{\ensuremath\mbox{\rm{mod}-$A$}}
\newcommand{\ProjA}{\ensuremath\mbox{\rm{Proj}-$A$}}\newcommand{\projA}{\ensuremath\mbox{\rm{proj}-$A$}}
\newcommand{\Aproj}{\ensuremath\mbox{A-\rm{proj}}}
\newcommand{\ModB}{\ensuremath\mbox{\rm{Mod}-$B$}}
\newcommand{\Ann}[1]{\mbox{\rm{Ann}}(#1)}
\newcommand{\Ab}{\mathrm{Ab}}
\newcommand{\C}{\mathcal{C}}
\newcommand{\Mor}[1]{\mbox{\rm{Mor}}(#1)}
\begin{document}
\title{On the abundance of silting modules}
\author{Lidia Angeleri H\"ugel}
\address{Lidia Angeleri H\"ugel, Dipartimento di Informatica - Settore di Matematica, Universit\`a degli Studi di Verona, Strada le Grazie 15 - Ca' Vignal, I-37134 Verona, Italy} \email{lidia.angeleri@univr.it}

\maketitle

\begin{abstract} Silting modules are abundant. Indeed, they parametrise the definable torsion classes over a noetherian ring, and the hereditary torsion pairs of finite type over a commutative ring. Also the universal localisations of a hereditary ring, or of a finite dimensional algebra of finite representation type, can be  parametrised by silting modules. In these notes, we give a brief introduction to the fairly recent concepts of  silting and cosilting module, and we explain the classification results mentioned above.
\end{abstract}
 
\bigskip

\bigskip

\noindent
\section{Introduction}
The notion of a (compact) silting complex was introduced by Keller and Vossieck \cite{KV}, and it was  later rediscovered in work of 
 Aihara-Iyama, Keller-Nicol\`as, K\"onig-Yang,  Mendoza-S\'aenz-Santiago-Souto Salorio, and Wei \cite{AI,KN,MSSS,Wei,KY}. This renewed interest was motivated by cluster theory, and also by the interplay with certain torsion pairs in triangulated  categories. In fact, silting complexes are closely related with  t-structures and co-t-structures in the derived category. This connection was recently extended to non-compact silting complexes, and  more generally, to silting objects in triangulated categories \cite{AMV1,PV,NSZ}.
 
 \smallskip

 But silting is also closely related to torsion pairs and localisation of abelian categories, as well as to ring theoretic localisation. The aim of these notes is to review  some of these connections. We will see that localisation techniques provide  constructions of silting objects and lead to classification results over several important classes of rings. There is also an interesting interaction with  combinatorial  aspects of silting. Certain posets studied in cluster theory have a ring theoretic interpretation which sheds new light on their structure. On the other hand, silting theory yields a new approach to general questions  on homological properties of ring epimorphisms. 
 
  \smallskip
 
We will consider silting modules, that is, the modules that arise as zero cohomologies of (not necessarily compact) 2-term silting complexes over an arbitrary ring. These modules were introduced in \cite{AMV1}. They provide a generalisation  of  (not necessarily finitely generated) tilting modules. Moreover, over a  finite dimensional algebra, the finitely generated silting modules are precisely the support $\tau$-tilting modules introduced in \cite{AIR} 
and studied in cluster theory.

\smallskip

In representation theory, one usually studies finite dimensional tilting or support $\tau$-tilting modules up to isomorphism and multiplicities. Similarly, in the infinite dimensional case, it is convenient to study silting classes rather than modules. The silting class $\Gen T$ given by a silting module $T$ consists of all $T$-generated modules, and it determines the additive closure $\Add T$ of $T$. It has several useful closure properties, making it  a definable torsion class. A silting class thus provides every module $M$ with a (minimal) right $\Gen T$-approximation given by the trace of $T$ in $M$, and with a left $\Gen T$-approximation. Furthermore,  $\Gen T$ - even when $T$ is not finitely generated - satisfies an important finiteness condition: it is determined by a set $\Sigma$ of morphisms between finitely generated projective modules \cite{MS2}. This ``finite type'' result extends the analogous result for tilting modules from \cite{BH} stating that every tilting class is determined by a set of finitely presented modules of projective dimension at most one.

\smallskip

It turns out that in many respects the dual concept  of a 
 cosilting module studied in \cite{BP,WZ,BZ} is more accessible. 
 Cosilting modules are pure-injective, and cosilting classes are definable torsionfree classes, in fact, they coincide with the definable torsionfree classes. Notice that the dual result is not true in general: not every definable torsion class is silting. Indeed, there are more cosilting than silting classes, because in general cosilting modules are not of ``cofinite type'', that is, they need not be determined by a set $\Sigma$ as above. Also this phenomenon was already known for cotilting modules \cite{Ba}, a cosilting example is exhibited in Example~\ref{notcoft}.
 
 \smallskip
 
 However, over a noetherian ring,  every torsion pair with definable torsionfree class is generated by a set of finitely presented modules. This allows to show that all cosilting modules are of cofinite type. Then we obtain dually that the silting classes coincide with the  definable torsion classes \cite{AH}. This can be regarded as a ``large'' analog of   \cite[Theorem 2.7]{AIR} stating that over a finite dimensional algebra $A$, there is a bijection between isomorphism classes of basic support $\tau$-tilting modules and functorially finite torsion classes  in $\modA$.

\smallskip

Similarly, one can show that over a commutative ring, silting modules correspond bijectively to hereditary torsion pairs of finite type, and if the ring is commutative noetherian, they are parametrised by subsets of the spectrum closed under specialisation \cite{AH}.

\smallskip

Silting is thus often related to Gabriel localisation of  module categories. But it is also related to ring theoretic localisation, in particular to  universal localisation of rings in the sense of \cite{Sch}. In fact, by \cite{AMV2} every partial silting module over a ring $A$ induces a ring epimorphism $A\to B$ with nice homological properties, and it is proved in \cite{MS2} that every universal localisation of $A$ arises in this way from some partial silting module.  Further, in certain cases, the universal localisations of $A$, or the homological ring epimorphisms starting in $A$, are parametrised by silting modules. 

\smallskip 

More precisely, one obtains a parametrisation by minimal silting modules, that is, silting modules satisfying a condition which ensures the existence of a minimal left $\Gen T$-approximation ${f:A\to T_0}$ for the regular module $A$. For example,  
when $A$ is a finite dimensional algebra, every finite dimensional silting module is minimal. Moreover,   the cokernel of the approximation $f$ is a partial silting module uniquely determined by $T$, so one can associate to $T$  a ring epimorphism  $A\to B$, which turns out to be a universal localisation of $A$. If $A$ is of  finite representation type, or more generally, $\tau$-tilting finite in the sense of \cite{DIJ}, this assignment yields a bijection between silting modules and universal localisations  of $A$, see \cite{MS1}.

\smallskip

A similar result holds true when $A$ is a  hereditary ring. As shown in \cite{AMV2}, the minimal silting $A$-modules are in bijection with the universal localisations  of $A$, which by \cite{KSt} coincide  with the homological ring epimorphisms. If $A$ is a finite dimensional hereditary algebra, this correspondence yields a bijection between finite dimensional support tilting modules and functorially finite wide subcategories of $\modA$,  recovering results from \cite{IT, Ma}. The combinatorial interpretation of finite dimensional support tilting modules in terms of noncrossing partitions or clusters can then be translated into ring theoretic terms by considering the poset of all universal localisations of $A$. Notice that universal localisations of $A$  can have infinite dimension. Leaving the finite dimensional world, however, has the advantage that the poset becomes a lattice, while the same is not true if we restrict to ring epimorphisms with finite dimensional target, see e.g.~Example~\ref{posets}(1).

\smallskip

The paper is organised as follows. Section 2 is devoted to the notion of a silting module. In Section 3, we  collect some results on cosilting modules scattered in the literature. The interplay between silting and cosilting modules is discussed in Section 4, which also contains the classification results over noetherian or commutative rings mentioned above. In Section 5, we review the connections with ring epimorphisms, and we explain the parametrisation of all universal localisations of a hereditary ring. Finally, in Section 6, we focus on finite dimensional algebras and on the poset of their universal localisations.

\bigskip

{\bf Acknowledgements.} These notes arose from my talk at the ``Maurice Auslander Distinguished Lectures and Conference 2015'' in Woods Hole, and from  lectures delivered  at  the University of Verona. I would like to thank the organisers  for the invitation to a very nice and interesting conference. Furthermore, I thank 
Simion Breaz, 
Frederik Marks, Jan  \v{S}\v{t}ov\'\i\v{c}ek, and Jorge Vit\'oria for many valuable comments on a first version of the manuscript, and I thank  Pavel P\v{r}\'ihoda for providing the argument in Example~\ref{exs2}(3).

\bigskip

{\bf Notation.}
Throughout the paper, $A$ will denote a ring, $\ModA$ the category of all right $A$-modules,  and  $\rfmod{A}$ the category of all finitely presented right $A$-modules. The corresponding categories of left $A$-modules are denoted by $\lmod{A}$ and $\lfmod{A}$. The unbounded derived category of $\ModA$ is denoted by $\D(A)$.
We further denote by $\ProjA$ and $\projA$ the full subcategory of $\ModA$ consisting of all projective and all finitely generated projective right $A$-modules, respectively, and we write $\;^\ast =\Hom_A(-,A)$.

 Given a subcategory $\mathcal{C}$ of $\ModA$,  we denote   $$\mathcal{C}^o=\{M \in \ModA \mid \Hom_A({\Ccal},{M})=0\},$$ $$\mathcal{C}^{\perp_1}=\{M \in \ModA \mid \Ext^{1}_A({\Ccal},{M})=0\}.$$ The classes ${}^o\Ccal$ and ${}^{\perp_1}\Ccal$ are defined dually.
Moreover, $\Add\Ccal$ and $\Prod\Ccal$ are the subcategories of $\ModA$  formed by the modules that are isomorphic to a direct summand of a coproduct  of modules in $\Ccal$, or of a product of such modules, respectively.
 $\Gen\Ccal$ is the subcategory of the $\Ccal$-generated modules, i.~e.~the epimorphic images of modules in $\Add\Ccal$, and $\Cogen\Ccal$ is defined dually. When $\Ccal$ just consists of a single module $M$, we write $\Add M, \Prod M, \Gen M, \Cogen M$.

Furthermore, we denote by $\Mor{\mathcal{C}}$ the class of all morphisms in $\ModA$ between objects in $\mathcal{C}$.
Given a set of morphisms $\Sigma\subset\Mor\Ccal$, we consider the classes
\[\begin{array}{lcl}
\Dcal_\Sigma&=&\{X\in \ModA \,\mid\, \Hom_A(\sigma,X)\ \text{is surjective for all } \sigma\in\Sigma \};\\
\Xcal_\Sigma&=&\{X\in \ModA \,\mid\, \Hom_A(\sigma,X)\ \text{is bijective for all } \sigma\in\Sigma \};\\
\mathcal{C}_\Sigma&=&\{X\in \rmod A \,\mid\, \Hom_{A}({X},{\sigma})\ \text{is surjective  for all } \sigma\in\Sigma\};\\ 
\mathcal{F}_\Sigma&=& \{X \in \lmod{A} \,\mid\, \sigma \otimes_A X \text{ is injective for all } \sigma\in\Sigma\}.\end{array}\]
Again, when $\Sigma=\{\sigma\}$, we  write $\Dcal_\sigma, \Xcal_\sigma, \Ccal_\sigma, \Fcal_\sigma$.

\bigskip

\bigskip
\section{Silting modules}

We start out by briefly reviewing the notion of a (not necessarily compact) silting complex. \begin{definition}  \cite{Wei}
A bounded complex of projective $A$-modules $\sigma$ is said to be {\em silting} if 
\begin{enumerate}
\item $\Hom_{\D(A)}(\sigma,\sigma^{(I)}[i])=0$, for all sets $I$ and $i>0$.
\item[(2)] the smallest triangulated subcategory of $\D(A)$ containing $\Add\sigma$ is ${\rm K^b}(\ProjA)$.
\end{enumerate}
\end{definition}

A prominent role is played by the silting complexes of length two which were studied in \cite{AIR} in connection with cluster mutation. Their endomorphism ring has interesting properties investigated in \cite{BuanZhou1,BuanZhou2, BuanZhou3}.
Another important feature is the fact that 
  2-term silting complexes (which we  always assume concentrated in cohomological degrees -1 and 0) are determined by their zero cohomology. In fact, identifying a complex $\ldots0\to P_{-1}\stackrel{\sigma}{\longrightarrow} P_0\to 0\ldots$ with the morphism $\sigma$ in $\Mor\ProjA$, one obtains the following result which goes back to work of Hoshino-Kato-Miyachi \cite{HKM}.
 
\begin{proposition} Let $\sigma$ be 2-term complex in ${\rm K^b}(\ProjA)$ and $T=H^0(\sigma)$.
 Then $\sigma$ is a silting complex if and only if the class
$\Gen T$ of $T$-generated modules coincides with  
$\Dcal_\sigma$.
\end{proposition}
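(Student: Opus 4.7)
Plan. My first step is to reinterpret $\Dcal_\sigma$ cohomologically. The brutal truncation of $\sigma$ gives a triangle $P_{-1} \xrightarrow{\sigma} P_0 \to \sigma \to P_{-1}[1]$ in $\D(A)$, and applying $\Hom_{\D(A)}(-,X[1])$ for a module $X$ collapses---thanks to the projectivity of $P_0$ and $P_{-1}$---to the identification
$$\Hom_{\D(A)}(\sigma,X[1]) \;\cong\; \Coker\bigl(\Hom_A(\sigma, X)\bigr),$$
so that $\Dcal_\sigma=\{X \in \rmod{A} \mid \Hom_{\D(A)}(\sigma,X[1])=0\}$. From the same triangle one also reads off that $\Hom_{\D(A)}(\sigma, M[k]) = 0$ for any module $M$ and $k \geq 2$, and a quick diagram chase shows that $\Dcal_\sigma$ is closed under quotients and extensions in $\rmod{A}$.

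Next, I would compare silting condition (1) to the inclusion $\Gen T \subseteq \Dcal_\sigma$. Applying $\Hom_{\D(A)}(\sigma, -)$ to the truncation triangle $\Ker(\sigma)^{(I)}[1] \to \sigma^{(I)} \to T^{(I)}$ and feeding in the vanishing from Step~1, one obtains $\Hom_{\D(A)}(\sigma, \sigma^{(I)}[i]) = 0$ automatically for $i \geq 2$, and an isomorphism
$$\Hom_{\D(A)}(\sigma, \sigma^{(I)}[1]) \;\cong\; \Hom_{\D(A)}(\sigma, T^{(I)}[1]).$$
Condition (1) is thus equivalent to $T^{(I)} \in \Dcal_\sigma$ for all sets $I$, and closure of $\Dcal_\sigma$ under quotients upgrades this equivalently to $\Gen T \subseteq \Dcal_\sigma$.

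Third, I would address condition (2) and the opposite inclusion $\Dcal_\sigma\subseteq\Gen T$. The crux is the existence of a two-step presentation
$$A \to \sigma_0 \to \sigma_1 \to A[1]$$
in $\D(A)$ with $\sigma_0,\sigma_1\in\Add\sigma$. Assuming $\sigma$ silting, $A\in{\rm K^b}(\ProjA)=\text{thick}(\Add\sigma)$, and the 2-term shape of $\sigma$ forces such a resolution to exist. Applying $\Hom_{\D(A)}(-,X)$ for $X\in\Dcal_\sigma$ and using $\Hom_{\D(A)}(\sigma_1,X[1])=0$ yields a surjection $\Hom_A(T^{(J)},X)\cong\Hom_{\D(A)}(\sigma_0,X)\twoheadrightarrow\Hom_{\D(A)}(A,X)=X$, exhibiting every $x\in X$ in the image of some map $T^{(J)}\to X$, whence $X\in\Gen T$. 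Conversely, assuming $\Gen T=\Dcal_\sigma$, I would build the triangle by lifting the canonical map $A\to T^{(\Hom_A(A,T))}$ (which is a left $\Gen T$-approximation of $A$) to a morphism $A\to\sigma^{(\Hom_A(A,T))}$ in $\D(A)$---possible because $A$ is projective, so the long exact sequence for $\Hom_{\D(A)}(A,-)$ applied to the truncation triangle of $\sigma^{(I)}$ has no obstruction---and then checking that the cone lies in $\Add\sigma$ via the torsion class structure of $\Gen T=\Dcal_\sigma$.

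The main obstacle I expect is precisely this condition~(2) step, especially in the converse direction: verifying that the cone of the constructed left approximation really belongs to $\Add\sigma$, not merely to $\text{thick}(\Add\sigma)$. This is the place where the proof truly uses the 2-term shape of $\sigma$ and the precise interaction between the torsion class $\Gen T$ and the cohomological description of $\Dcal_\sigma$, rather than purely formal manipulations of $\Hom$ and $\Ext$.
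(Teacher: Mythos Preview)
The paper does not actually prove this proposition. It is stated as a known result ``which goes back to work of Hoshino--Kato--Miyachi \cite{HKM}'' (and, in the form used here, to \cite{AMV1,Wei}); no argument is given in the text. So there is no proof in the paper to compare your proposal against.

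That said, your outline is sound and matches the standard route taken in the literature. Your Steps~1 and~2 are correct and are exactly how one reduces condition~(1) of silting to the inclusion $\Gen T\subseteq\Dcal_\sigma$; this is the content of Lemma~\ref{corigid} on the dual side. For the forward direction in Step~3 you invoke the approximation triangle $A\to\sigma_0\to\sigma_1\to A[1]$ with $\sigma_0,\sigma_1\in\Add\sigma$: this is precisely the triangle~(\ref{approx tria}) that the paper later quotes from \cite{Wei}, and your deduction of $\Dcal_\sigma\subseteq\Gen T$ from it is the right one.

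You correctly flag the converse direction of condition~(2) as the nontrivial point. Your plan---lift the canonical map $A\to T^{(I)}$ to $\phi\colon A\to\sigma^{(I)}$ and study its cone $\omega$---is the right construction (it is dual to the one carried out in Proposition~\ref{constr} and appears again in Proposition~\ref{Prop silting}). What your sketch leaves open is why $\omega\in\Add\sigma$, and the phrase ``via the torsion class structure'' is not yet an argument. The missing ingredient is to first show that $\phi$ is a left $\Add\sigma$-approximation in $\D(A)$ (this uses that $\Hom_{\D(A)}(\sigma^{(I)},\sigma^{(J)})\twoheadrightarrow\Hom_A(T^{(I)},T^{(J)})$ together with the approximation property of $f$), whence $\Hom_{\D(A)}(\omega,\Add\sigma[1])=0$; one then checks $\Hom_{\D(A)}(\Add\sigma,\omega[1])=0$ and $\Hom_{\D(A)}(\omega,\omega^{(J)}[1])=0$ from the defining triangle, so that $\sigma\oplus\omega$ satisfies condition~(1), and condition~(2) follows because the triangle $A\to\sigma^{(I)}\to\omega\to A[1]$ exhibits $A\in\tria(\Add(\sigma\oplus\omega))$. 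Finally, two silting complexes $\sigma$ and $\sigma\oplus\omega$ with $\Add\sigma\subseteq\Add(\sigma\oplus\omega)$ must have equal additive closures, forcing $\omega\in\Add\sigma$. This is essentially the argument in \cite{Wei} and \cite[Theorem~4.9]{AMV1}; your proposal points at it but does not yet carry it out.
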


We are interested in the modules that occur in this way. 
\begin{definition}
A right $A$-module $T$ is
{\em silting} if it admits a projective presentation $ P_{-1}\stackrel{\sigma}{\longrightarrow} P_0\to T\to 0$ such that 
 $\Gen T=\Dcal_\sigma.$ The class $\Gen T$ is then called a {\em silting class}.
 
 \smallskip
 
 We say that two silting modules $T,T'$ are {\em equivalent} if they generate the same silting class. By \cite[Section 3]{AMV1}, this is equivalent to $\Add T=\Add T'$.
\end{definition}

\begin{examples}\label{exs}
(1) A module $T$ is silting with respect to a monomorphic projective presentation $\sigma:P_{-1}{\hookrightarrow} P_0$ if and only if $\Gen T=\Ker\Ext_A^1(T,-),$ i.~e.~$T$ is a {\it tilting} module (of projective dimension at most one, not necessarily finitely generated).

\smallskip

(2) If $A$ is a finite dimensional algebra over a field, and $T\in \rfmod{A}$, then  $T$ is silting if and only if it is {\it support $\tau$-tilting} in the sense of \cite{AIR}.

\smallskip

(3) Let $A$ be the path algebra of   the quiver $Q$ having two vertices, 1 and 2, and countably many arrows from 1 to 2. Let $P_i=e_iA$ be the indecomposable projective $A$-module  for $i=1,2$. Then $T:=S_2$ with the
projective presentation
$$\xymatrix{0\ar[r] & P_1^{(\Nbb)}\ar[r]^\sigma & P_2\ar[r] & T\ar[r] & 0,}$$
 is a silting module (of projective dimension one, not  finitely presented) which is not tilting. 
 
Indeed,  it is not tilting as the class $\Gen T$ consists precisely of the semisimple injective $A$-modules, so $\Gen T=\Ker\Hom_A(P_1,-)\subsetneq \Ker\Ext_A^1(T,-)$. But $T$ is  silting with respect to the projective presentation $\gamma$  of $T$ obtained as the direct sum of $\sigma$ with the trivial map $P_1\ra 0$, since $\mathcal{D}_{\gamma}=\Ker\Ext_A^1(T,-)\cap \Ker\Hom_A(P_1,-)=\Ker\Hom_A(P_1,-)=\Gen T.$
\end{examples}

Notice  that every class of the form $\Dcal_\sigma$ for a morphism $\sigma\in\Mor\ProjA$ 
is closed under extensions, epimorphic images, and direct products. In the silting case, $\Dcal_\sigma=\Gen T$ is also closed under coproducts,  and it is therefore a torsion class. 
Moreover, even though $\sigma$ need not belong to $\Mor\projA$, the silting class $\Dcal_\sigma$  is determined by a set $\Sigma$ of morphisms in $\Mor\projA$, that is,  $\Dcal_\sigma=\Dcal_\Sigma$. This property is obtained in \cite{MS2} as a consequence of  the analogous result  for tilting modules proved in \cite{BH}; an alternate proof  in \cite{AH} uses that $\Dcal_\sigma$ is definable
(as shown in  \cite[3.5 and 3.10]{AMV1}).
Recall that a class of modules is said to be  {\em definable} if it is closed under direct limits, direct products, and pure submodules.

 \smallskip
 
In fact, these  properties characterise silting classes.

\begin{theorem}\cite{MS2}\label{deft}
The following statements are equivalent for a full subcategory $\Dcal$ of $\ModA$. 
		\begin{enumerate}
			\item[(i)] $\Dcal$ is a silting class;
			\item[(ii)]  there is	$\sigma \in \Mor{\ProjA}$ such that  $\Dcal=\Dcal_\sigma$, and $\Dcal$ is  closed for coproducts;	
			\item[(iii)] there is	a set $\Sigma\subset\Mor\projA$ such that  $\Dcal=\Dcal_\Sigma$.		\end{enumerate}
			Moreover, $\Dcal$ is a tilting class if and only if $\Dcal=\Dcal_\Sigma$ for a set  $\Sigma$ of monomorphisms in $\Mor\projA$.
\end{theorem}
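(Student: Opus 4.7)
The plan is to prove the cycle (i)$\Rightarrow$(ii)$\Rightarrow$(iii)$\Rightarrow$(i), followed by the tilting addendum. The implication (i)$\Rightarrow$(ii) is a restatement of the definition of a silting class, combined with the observation just before the theorem that silting classes are torsion classes and hence closed under coproducts. For (iii)$\Rightarrow$(i), given $\Sigma\subseteq\Mor\projA$ with $\Dcal=\Dcal_\Sigma$, I would set $\sigma:=\coprod_{\tau\in\Sigma}\tau\in\Mor\ProjA$ and $T:=\Coker\sigma$. Since each $\tau\in\Sigma$ has finitely generated projective source and target, $\Hom_A(\tau,-)$ commutes with coproducts, so $\Dcal_\sigma=\Dcal_\Sigma$ is closed under coproducts; together with closure under epimorphic images and the fact that $T\in\Dcal_\sigma$, this gives $\Gen T\subseteq\Dcal_\sigma$. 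The reverse inclusion uses the trace of $T$: the surjectivity of $\Hom_A(\sigma,X)$ for $X\in\Dcal_\sigma$ translates into enough maps $T\to X$ to exhaust $X$, and the Hoshino-Kato-Miyachi proposition above then identifies $T$ as a silting module with silting class $\Dcal$.

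The real obstacle is (ii)$\Rightarrow$(iii). Given $\sigma\in\Mor\ProjA$ with $\Dcal_\sigma$ closed under coproducts, the class is a torsion class, and by \cite[3.5, 3.10]{AMV1} it is in fact \emph{definable}: closed also under direct limits and pure submodules. My strategy would be to exploit definability to reduce $\sigma$ to finitely generated data. Writing $P_{-1}$ and $P_0$ as directed colimits of their finitely generated projective summands, I would lift $\sigma$ to a directed system $(\sigma_i)_{i\in I}\subset\Mor\projA$; definability, combined with the behaviour of $\Hom_A(-,X)$ on such colimits, should force $\Dcal_\sigma=\bigcap_{i\in I}\Dcal_{\sigma_i}=\Dcal_\Sigma$ for $\Sigma:=\{\sigma_i\mid i\in I\}$. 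The hard part is precisely controlling this reduction, because $T=\Coker\sigma$ need not have finite projective dimension, so the Bazzoni-Herbera tilting finite-type theorem \cite{BH} is not directly available; an alternative route, used in \cite{MS2}, is to pass through a bireflective subcategory on which the silting datum behaves like a tilting module of projective dimension at most one, invoke \cite{BH} there, and pull the resulting set of finitely generated projective witnesses back to $\Mor\projA$.

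The tilting addendum then follows by combining Example~\ref{exs}(1) with the implications already proven. If $\Sigma\subset\Mor\projA$ consists of monomorphisms, then the coproduct $\sigma:=\coprod_{\tau\in\Sigma}\tau$ is still monomorphic (coproducts being exact in $\ModA$), so $T:=\Coker\sigma$ admits a monomorphic projective presentation and is tilting by Example~\ref{exs}(1); hence $\Dcal=\Gen T$ is a tilting class. Conversely, if $\Dcal$ is a tilting class, \cite{BH} yields a set $\{S_j\}$ of finitely presented modules of projective dimension $\le 1$ with $\Dcal=\bigcap_j\Ker\Ext_A^1(S_j,-)$; choosing finitely generated projective resolutions $0\to P_{-1}^j\xrightarrow{\sigma_j}P_0^j\to S_j\to 0$ in $\projA$ then produces a set $\Sigma:=\{\sigma_j\}$ of monomorphisms in $\Mor\projA$ with $\Dcal=\Dcal_\Sigma$.
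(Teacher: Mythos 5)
The paper does not reproduce a proof of this theorem---it is cited to \cite{MS2} with a one-line indication of the two known strategies (reduction to \cite{BH}, or definability as in \cite{AH})---so I will assess your argument on its own terms. It has a genuine gap that appears in two places.

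\textbf{The gap.} In both your (iii)$\Rightarrow$(i) step and the tilting addendum you set $\sigma:=\coprod_{\tau\in\Sigma}\tau$, put $T:=\Coker\sigma$, and assert ``the fact that $T\in\Dcal_\sigma$''. This is false in general. Whether $\Coker\sigma$ lies in $\Dcal_\sigma$ is a nontrivial rigidity condition: for $\sigma$ a monomorphism it is equivalent to $\Ext_A^1(T,T)=0$, which is precisely part of what makes a module (partial) tilting and does not come for free. Concretely, take $A=\Z$ and $\Sigma=\{\sigma\}$ with $\sigma=(\,\cdot 2):\Z\to\Z$, a monomorphism in $\Mor\projA$. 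Then $\Dcal_\sigma$ is the class of $2$-divisible groups, while $\Coker\sigma=\Z/2\Z$ is not $2$-divisible, so $\Coker\sigma\notin\Dcal_\sigma$; and indeed $\Z/2\Z$ is not the tilting module generating this class. The silting (resp.\ tilting) module witnessing $\Dcal_\Sigma$ is not $\Coker\sigma$ but must be produced by a Bongartz-type completion: one realises $\Coker\sigma$ as a partial silting module only after possibly replacing $\sigma$, and then constructs a left $\Add$-approximation triangle $A\to\sigma_0\to\sigma^{(\kappa)}\to$ whose zero cohomology gives the actual silting module (cf.\ Proposition~\ref{Prop silting} in Section 5, or \cite[Theorem 3.12]{AMV1}). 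Your cycle (i)$\Rightarrow$(ii)$\Rightarrow$(iii)$\Rightarrow$(i) therefore does not close.

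\textbf{On (ii)$\Rightarrow$(iii).} You correctly identify this as the hard implication, but the directed-colimit sketch is too optimistic. It is not clear that an arbitrary $\sigma\in\Mor\ProjA$ can be realised as a filtered colimit of maps $\sigma_i\in\Mor\projA$ in a way compatible with $\Hom_A(-,X)$; and even granting such a system, the asserted equality $\Dcal_\sigma=\bigcap_i\Dcal_{\sigma_i}$ would need a genuine argument, since surjectivity of $\Hom_A(\sigma,X)$ is not a termwise condition. Definability of $\Dcal_\sigma$ (from \cite{AMV1}) gives you $\Dcal_\sigma=\Dcal_\Sigma$ for some $\Sigma\subset\Mor(\modA)$, not $\Mor\projA$; upgrading from finitely presented modules to finitely generated projectives is exactly the substance of the finite-type theorem and cannot be waved through. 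The route you mention in passing---factoring through the bireflective subcategory $\Xcal_\sigma$ and applying \cite{BH} there, as in \cite{MS2}---is the one that actually works and is what you should develop. By contrast, your converse direction in the tilting addendum (tilting class $\Rightarrow\Dcal_\Sigma$ with monic $\Sigma$, via \cite{BH} and projective resolutions) is correct as written.
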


A silting class $\Dcal$, being a  torsion class,   provides any module $M$ with a minimal right $\Dcal$-approximation $d(M)\hookrightarrow M$ where $d$ denotes the torsion radical corresponding to $\Dcal$. Moreover, being definable, $\Dcal$ also provides $M$ with a left $\Dcal$-approximation. 
In fact, the existence of special left approximations can be used to detect the torsion classes which are generated by a tilting module.
\begin{theorem} \cite{ATT}\label{attilting}
A torsion class $\Tcal$ in $\ModA$  is of the form $\Tcal=\Gen T$ for some tilting module $T$ if and only if for every $A$-module $M$ (or equivalently, for $M=A$) there is a short exact sequence $0\to M\to B\to C\to 0$ with $B\in\Tcal$ and $C\in {}^{\perp_1} \Tcal$.
\end{theorem}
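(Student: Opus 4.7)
The plan is to go through the special case $M=A$: show that the full special preenveloping condition is equivalent to the weaker assumption that the approximation exists just for $A$, and then read off a tilting module from that single sequence.

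For the ``only if'' direction, a tilting module $T$ satisfies $\Gen T=\Ker\Ext_A^1(T,-)$ by Example~\ref{exs}(1), so $\Tcal$ is the $\perp_1$-class of the single module $T$. By Eklof--Trlifaj, the cotorsion pair $({}^{\perp_1}\Tcal,\Tcal)$ is then complete, and the special $\Tcal$-preenvelope of any $M$ gives the required sequence $0\to M\to B\to C\to 0$ with $B\in\Tcal$ and $C\in{}^{\perp_1}\Tcal$.

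For the ``if'' direction it suffices to assume the hypothesis for $M=A$. Fix a sequence $0\to A\stackrel{\iota}{\to} T_0\to T_1\to 0$ with $T_0\in\Tcal$ and $T_1\in{}^{\perp_1}\Tcal$, and set $T=T_0\oplus T_1$. Since $\Tcal$ is closed under quotients, $T_1\in\Tcal$, so $T\in\Tcal$ and $\Gen T\subseteq\Tcal$. For any $X\in\Tcal$, applying $\Hom_A(-,X)$ to the sequence and using the vanishing of $\Ext_A^1(T_1,X)$ (by hypothesis on $T_1$) and of $\Ext_A^1(A,X)$ (by projectivity of $A$) forces $\Ext_A^1(T_0,X)=0$. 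Hence $T\in{}^{\perp_1}\Tcal$, so $\Tcal\subseteq\Ker\Ext_A^1(T,-)$.

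The key step is the reverse inclusion $\Ker\Ext_A^1(T,-)\subseteq\Gen T$. Given $X$ with $\Ext_A^1(T,X)=0$, the same long exact sequence yields a surjection of abelian groups $\Hom_A(T_0,X)\twoheadrightarrow \Hom_A(A,X)\cong X$, the second map sending $g\mapsto g(\iota(1))$. For each $x\in X$ pick a preimage $g_x\in\Hom_A(T_0,X)$; then the $A$-linear map $T_0^{(X)}\to X$ whose $x$-th component is $g_x$ has image containing every $g_x(\iota(1))=x$, hence is surjective. Thus $X\in\Gen T_0\subseteq\Gen T$. Combining the three inclusions yields $\Gen T=\Tcal=\Ker\Ext_A^1(T,-)$, so $T$ is tilting by Example~\ref{exs}(1); the equality $\Gen T=\Ker\Ext_A^1(T,-)$ additionally forces $\pdim{T}\le 1$ via the standard injective-envelope argument, supplying the monomorphic projective presentation. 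I expect the main obstacle to be precisely this inclusion --- upgrading an abelian-group surjection to a genuine $T$-generation in $\ModA$ --- but once the trace map is written down it is clean, and the rest reduces to bookkeeping with long exact sequences.
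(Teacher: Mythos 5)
The paper itself states this result as a citation to \cite{ATT} and does not reproduce a proof, so there is nothing in-text to compare against; I can only assess your argument on its own merits, and it is correct.

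For the ``only if'' direction, your appeal to Eklof--Trlifaj is legitimate: since $\Tcal = T^{\perp_1}$, the pair $({}^{\perp_1}\Tcal,\Tcal)$ is a cotorsion pair cogenerated by a set, hence complete, giving the required special $\Tcal$-preenvelopes for every $M$. For the ``if'' direction, starting from $0\to A\stackrel{\iota}\to T_0\to T_1\to 0$ with $T_0\in\Tcal$ and $T_1\in{}^{\perp_1}\Tcal$, your chain of inclusions is clean: $T_1$ is a quotient of $T_0$ so $T:=T_0\oplus T_1\in\Tcal$, whence $\Gen T\subseteq\Tcal$ (a torsion class is closed under coproducts and quotients); the long exact sequence from applying $\Hom_A(-,X)$ for $X\in\Tcal$, together with projectivity of $A$ and $\Ext^1_A(T_1,X)=0$, kills $\Ext^1_A(T_0,X)$, so $\Tcal\subseteq T^{\perp_1}$; and the trace argument (surjectivity of $\Hom_A(T_0,X)\to\Hom_A(A,X)\cong X$ whenever $\Ext^1_A(T_1,X)=0$) closes the loop $T^{\perp_1}\subseteq\Gen T_0\subseteq\Gen T$. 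Then $\Gen T=\Tcal=T^{\perp_1}$, and as you note, the equality $\Gen T=\Ker\Ext^1_A(T,-)$ is precisely the characterisation of tilting modules invoked in Example~\ref{exs}(1) (the usual injective-envelope argument gives $\pdim T\le 1$). Assuming only the case $M=A$ in the ``if'' direction while proving the ``only if'' direction for arbitrary $M$ correctly handles the parenthetical ``or equivalently'' in the statement. This is the standard Bongartz-style argument that goes back to Colpi--Trlifaj and Angeleri--Tonolo--Trlifaj; the only place where you deviate from what the original reference likely does is in routing the preenvelope existence through the general Eklof--Trlifaj completeness theorem rather than a bespoke construction, which is a perfectly reasonable modern shortcut.
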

 At the end of the next section, we will discuss how this result extends to silting modules.

\bigskip

\section{Cosilting modules}

When dealing with classification results for silting modules, it turns out that  the dual concept of a cosilting module, which was
introduced by Breaz and Pop in \cite{BP}, is often more accessible. In order to study the interplay between the two notions, it will be convenient to consider right silting and left cosilting modules. 

\begin{definition} A left $A$-module $C$ is {\it cosilting} if it admits an injective copresentation  $0\to C\longrightarrow E_{0}\stackrel{\omega}{\longrightarrow} E_1$ such that the class $\Cogen C$ of $C$-cogenerated modules coincides with the class 
$\mathcal{C}_\omega.$ The class $\Cogen C$ is then called a {\it cosilting class}.

\smallskip

Two cosilting modules $C,C'$ are said to be {\em equivalent} if they cogenerate the same class $\Cogen C=\Cogen C'$. It will follow from Remark~\ref{prod} below that this is equivalent to  $\Prod C=\Prod C'$. 
\end{definition}

Dually to Example~\ref{exs}(1), we have that $C$ is cosilting with respect to an epimorphic injective copresentation  $E_{0}\stackrel{\omega}{\twoheadrightarrow} E_1$  if and only if $\Cogen C=\Ker\Ext^1_A(-,C),$ i.~e.~$C$ is a {\it cotilting} module (of injective dimension at most one, not necessarily finitely generated).

\smallskip

Further,   dually to the silting case, we see that  every cosilting class $\Ccal_\omega=\Cogen C$ is a torsionfree class.
How to describe the torsionfree classes that arise in this way?
For  cotilting modules there is the following characterisation in terms of the existence of approximations.

\begin{theorem} \cite{ATT}\label{att}
A torsionfree class $\Fcal$ in $\lmod{A}$ is of the form $\Fcal=\Cogen C$ for some cotilting module $C$ if and only if for every left $A$-module $M$ (or equivalently, for an injective cogenerator  $E$ of  $\lmod{A}$) there is a short exact sequence $0\to  K\to L\to M\to 0$ with $L\in\Fcal$ and $K\in \Fcal^{\perp_1} $.
\end{theorem}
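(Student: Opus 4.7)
The plan is to dualize the argument for Theorem~\ref{attilting}.

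For the direction ``$\Rightarrow$'', I would rely on the fact that, for a cotilting module $C$, the pair $(\Fcal, \Fcal^{\perp_1})$ with $\Fcal = \Cogen C = \Ker \Ext^1_A(-,C)$ is a cotorsion pair in $\lmod{A}$, and moreover this cotorsion pair is complete: this is a standard consequence of $C$ having finite injective dimension (Eklof--Trlifaj, together with deconstructibility of $\Fcal$, or a Bongartz-style completion). Completeness immediately produces the desired special $\Fcal$-precover $0 \to K \to L \to M \to 0$ with $L \in \Fcal$ and $K \in \Fcal^{\perp_1}$ for every $M$, and in particular for $M = E$.

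For the direction ``$\Leftarrow$'', I would assume just the approximation property for an injective cogenerator $E$ of $\lmod{A}$, take the given sequence $0 \to K \to L \to E \to 0$, and set $C := L \oplus K$. Since $\Fcal$, being a torsionfree class, is closed under submodules, $K \hookrightarrow L \in \Fcal$ gives $K \in \Fcal$, hence $C \in \Fcal$. The engine of the proof is a pullback argument: for any left $A$-module $X$, the cogenerator property provides an embedding $X \hookrightarrow E^I$, and pulling back $0 \to K^I \to L^I \to E^I \to 0$ along this embedding produces $0 \to K^I \to P \to X \to 0$ with $P \hookrightarrow L^I$. Whenever $\Ext^1_A(X, K^I) = 0$, this sequence splits, placing $X$ as a direct summand of $L^I \subseteq C^I$. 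Since Ext commutes with products in the second slot, the hypothesis $\Ext^1_A(X, K^I) = 0$ holds both for $X \in \Fcal$ (using $K \in \Fcal^{\perp_1}$) and for $X \in \Ker \Ext^1_A(-, C)$ (which forces $\Ext^1_A(X, K) = 0$). Combined with the direct inclusions $\Cogen C \subseteq \Fcal$ (closure of $\Fcal$ under products and submodules) and $\Fcal \subseteq \Ker \Ext^1_A(-, C)$ (from the long exact sequence of $\Hom_A(X, -)$ applied to $0 \to K \to L \to E \to 0$, using $\Ext^1_A(X, E) = 0$), this yields $\Fcal = \Cogen C = \Ker \Ext^1_A(-, C)$.

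To complete the verification that $C$ is cotilting, I still need the epimorphic injective copresentation $0 \to C \to E(C) \to E(C)/C \to 0$, equivalently $\id{C} \leq 1$. This becomes inexpensive once the identity $\Cogen C = \Ker \Ext^1_A(-, C)$ is in hand: since $A$ is projective, $\Ext^1_A(A, C) = 0$, so $A \in \Cogen C$; closure of $\Cogen C$ under products and submodules then places all projectives, and hence all first syzygies $\Omega Y$, into $\Cogen C = \Ker \Ext^1_A(-, C)$. A dimension shift delivers $\Ext^2_A(Y, C) \cong \Ext^1_A(\Omega Y, C) = 0$ for every $Y$, so $\id{C} \leq 1$. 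I expect the main obstacle to be the pullback-and-split construction underlying the two key identifications; once this trick is set up, both the identification of $\Fcal$ as $\Cogen C$ and as $\Ker \Ext^1_A(-, C)$, as well as the verification of $\id{C} \leq 1$, fall out with routine bookkeeping.
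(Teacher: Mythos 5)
The paper only cites this theorem to \cite{ATT} and gives no proof of its own, so I will assess your argument on its merits.

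Your ``$\Leftarrow$'' direction is correct and complete, and it is the heart of the matter. You correctly reduce to the single sequence $0\to K\to L\to E\to 0$, put $C=L\oplus K$, observe $K\in\Fcal$ by closure of the torsionfree class under submodules, and then run the pullback-and-split argument to get $\Fcal\subseteq\Cogen C$ and $\Ker\Ext^1_A(-,C)\subseteq\Cogen C$. Together with the cheap inclusions $\Cogen C\subseteq\Fcal$ (closure under products and submodules) and $\Fcal\subseteq\Ker\Ext^1_A(-,C)$ (long exact sequence and $K\in\Fcal^{\perp_1}$), this gives $\Fcal=\Cogen C=\Ker\Ext^1_A(-,C)$, and your dimension shift for $\id C\le 1$ is also fine. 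This is essentially the ATT argument, and it also mirrors what the paper does explicitly in the proof of Theorem~\ref{deftfree} (2) $\Rightarrow$ (1).

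The weak spot is the ``$\Rightarrow$'' direction. Two of the three routes you offer do not apply cleanly. The Eklof--Trlifaj completeness theorem concerns cotorsion pairs of the form $({}^{\perp_1}(\Scal^{\perp_1}),\Scal^{\perp_1})$ generated on the right by a set $\Scal$; the cotilting cotorsion pair is of the opposite shape $({}^{\perp_1}C,({}^{\perp_1}C)^{\perp_1})$, and completeness of such pairs is not automatic -- the standard way to obtain it is via pure-injectivity of $C$, which is Bazzoni's theorem \cite{B} and postdates \cite{ATT}. Likewise, deconstructibility of $\Fcal={}^{\perp_1}C$ is not an obvious fact one can invoke here without that extra input. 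The third route you mention, a direct Bongartz-style completion, is the one that actually works and is presumably what ATT do: take the exact injective copresentation $0\to C\to E_0\stackrel{\omega}{\to}E_1\to 0$ (here $\id C\le 1$ is used), set $\kappa=\Hom_A(M,E_1)$, pull back $0\to C^\kappa\to E_0^\kappa\to E_1^\kappa\to 0$ along the evaluation map $M\to E_1^\kappa$ to get $0\to C^\kappa\to L\to M\to 0$, note $C^\kappa\in\Fcal^{\perp_1}$ by closure of $\Fcal^{\perp_1}$ under products, and check $L\in\Fcal=\Ker\Ext^1_A(-,C)$ via the long exact sequence: the connecting map $\Hom_A(C^\kappa,C)\to\Ext^1_A(M,C)$ is surjective by construction (composing with the projections $\pi_i$ hits every class coming from $\Hom_A(M,E_1)\twoheadrightarrow\Ext^1_A(M,C)$), while $\Ext^1_A(C^\kappa,C)=0$. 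You should commit to this construction rather than leaving the choice open.
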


An extension 
of this theorem to cosilting modules was first obtained by Zhang and Wei \cite{WZ}, as a result of their comparison of several  notions generalizing the definition of a cotilting  module. Here we focus only on  the arguments relevant to the notion of a cosilting module, which we collect   below for the reader's convenience. 
We start with the following  observations.

\begin{lemma}\label{corigid}
Let $C$ be a left $A$-module with  injective copresentation  $0\to C\longrightarrow E_{0}\stackrel{\omega}{\longrightarrow} E_1$. 
\begin{enumerate}
\item A module $X$ belongs to the class $\C_\omega$ if and only if $\Hom_{\D(A)}(X,\omega[1])=0$, if and only if $\Hom_{\D(A)}(\sigma,\omega[1])=0$ for any injective copresentation  $0\to X\longrightarrow I_{0}\stackrel{\sigma}{\longrightarrow} I_1$.
\item Assume that  $0\to C\longrightarrow E_{0}\stackrel{\omega}{\longrightarrow} E_1$ is a minimal injective copresentation. Then 
$\Cogen C\subset {}^{\perp_1} C$ if and only if $\Hom_{\D(A)}(\omega^I,\omega[1])=0$ for any set $I$, if and only if $\Hom_{\D(A)}(X,\omega[1])=0$ for all $X\in\Cogen C$.
\end{enumerate}
\end{lemma}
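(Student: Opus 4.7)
The plan for part (1) is a direct computation of $\Hom_{\D(A)}(X,\omega[1])$. Because $\omega[1]$ is a bounded complex of injectives, it is K-injective, so the derived Hom equals chain maps modulo homotopy in $\mathcal{K}(A)$. Viewing $X$ as a stalk complex in degree $0$ and $\omega[1]$ as $E_0\to E_1$ in degrees $-1,0$, a chain map reduces to a morphism $X\to E_1$, and null-homotopies correspond to factorizations through $\omega$. Hence $\Hom_{\D(A)}(X,\omega[1])\cong \Coker(\omega_*)$, which vanishes precisely when $X\in\C_\omega$. For the third equivalence in (1), the inclusion $X=\ker\sigma\hookrightarrow I_0$ produces a short exact sequence of complexes $0\to X\to \sigma\to \sigma/X\to 0$; since $H^0(\sigma)=X$, the quotient is quasi-isomorphic to $\Coker(\sigma)[-1]$, giving a triangle $X\to \sigma\to \Coker(\sigma)[-1]\to X[1]$. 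Applying $\Hom_{\D(A)}(-,\omega[1])$ and using the vanishing $\Hom_{\D(A)}(M,\omega[n])=0$ for any module $M$ and $n\ge 2$ (because $\omega[n]$ has no component in degree $0$), the long exact sequence collapses to the desired isomorphism $\Hom_{\D(A)}(\sigma,\omega[1])\cong \Hom_{\D(A)}(X,\omega[1])$.

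For part (2), the equivalence (b)$\Leftrightarrow$(c) follows cleanly from part (1). Since $\omega^I\colon E_0^I\to E_1^I$ is an injective copresentation of $C^I$ (products of injectives being injective), part (1) gives $\Hom_{\D(A)}(\omega^I,\omega[1])\cong \Hom_{\D(A)}(C^I,\omega[1])$, and the implication (c)$\Rightarrow$(b) is immediate because $C^I\in\Prod C\subset \Cogen C$. Conversely, any $X\in\Cogen C$ embeds into some $C^I$; the resulting triangle $X\to C^I\to Q\to X[1]$, under $\Hom_{\D(A)}(-,\omega[1])$ and using once more that $\Hom_{\D(A)}(Q,\omega[2])=0$, displays $\Hom_{\D(A)}(X,\omega[1])$ as a quotient of $\Hom_{\D(A)}(C^I,\omega[1])=0$, proving (b)$\Rightarrow$(c).

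For the remaining equivalence (a)$\Leftrightarrow$(c), I would exploit the triangle $C\to \omega\to \Coker(\omega)[-1]\to C[1]$ in $\D(A)$, which after applying $\Hom_{\D(A)}(X,-)$ yields the four-term exact sequence
\[
0\to \Ext^1_A(X,C)\to \Hom_{\D(A)}(X,\omega[1])\to \Hom_A(X,\Coker\omega)\to \Ext^2_A(X,C).
\]
The direction (c)$\Rightarrow$(a) is then immediate: vanishing of $\Hom_{\D(A)}(X,\omega[1])$ forces $\Ext^1_A(X,C)=0$. The reverse direction is the main technical obstacle, and this is where minimality of the copresentation enters essentially. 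I would approach it by deducing (b) from (a): since $C^I\in\Cogen C\subset{}^{\perp_1}C$ yields $\Ext^1_A(C^I,C)=0$ for every set $I$, it remains to control the term $\Hom_A(C^I,\Coker\omega)$, and the essentiality of $\Img\omega\hookrightarrow E_1$ in the minimal copresentation should ensure this term cannot contribute to $\Hom_{\D(A)}(C^I,\omega[1])$. Once (a)$\Rightarrow$(b) is established, the already-proven implication (b)$\Rightarrow$(c) closes the cycle.
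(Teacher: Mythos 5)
Your treatment of part (1) is correct and amounts to the ``standard arguments'' the paper invokes: compute $\Hom_{\D(A)}(X,\omega[1])$ as homotopy classes of chain maps into a bounded complex of injectives, obtaining $\Coker(\Hom_A(X,\omega))$, and then use the triangle $X\to\sigma\to\Coker(\sigma)[-1]\to X[1]$ together with the degree-support vanishing $\Hom_{\D(A)}(M,\omega[n])=0$ for $n\ge 2$. Likewise your proofs of (b)$\Rightarrow$(c) and (c)$\Rightarrow$(a) are fine, though a little heavier than necessary: the paper simply notes that $\Ccal_\omega$ is closed under submodules and satisfies $\Ccal_\omega\subset{}^{\perp_1}C$, which yields both implications without invoking triangles.

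The genuine gap is (a)$\Rightarrow$(b), and you flag it yourself with ``I would approach it by\ldots{} should ensure\ldots''. The four-term exact sequence
\[
0\to \Ext^1_A(C^I,C)\to \Hom_{\D(A)}(C^I,\omega[1])\to \Hom_A(C^I,\Coker\omega)
\]
together with $\Ext^1_A(C^I,C)=0$ gives only an \emph{injection} $\Hom_{\D(A)}(C^I,\omega[1])\hookrightarrow\Hom_A(C^I,\Coker\omega)$; vanishing of the source is not forced, and showing that the image vanishes is exactly equivalent to showing that every $f\colon C^I\to E_1$ has image inside $\Img\omega$ -- which is the whole point of the lemma, so nothing has been reduced. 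The paper (following Zhang--Wei) proves this directly: given $f\colon C^\kappa\to E_1$, set $K=\Ker(gf)=f^{-1}(\Img\omega)$ where $g\colon E_1\to\Coker\omega$; then $K\subset C^\kappa$ lies in $\Cogen C\subset{}^{\perp_1}C$, so the induced map $K\to\Img\omega$ lifts along $E_0\twoheadrightarrow\Img\omega$ to some $h\colon K\to E_0$; extending $h$ to $s_0\colon C^\kappa\to E_0$ by injectivity of $E_0$, one checks that $(f-\omega s_0)|_K=0$ and hence $\Img(f-\omega s_0)\cap\Img\omega=0$, whereupon essentiality of $\Img\omega\subset E_1$ (this is where minimality is used) forces $f=\omega s_0$. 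None of this diagram chase is captured by ``should ensure the term cannot contribute''; you have the right ingredients in mind, but the argument that makes essentiality bite has to be carried out.
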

\begin{proof} 
(1) is shown by standard arguments.
Moreover, using (1), the statement in (2) can be rephrased as follows: $\Cogen C\subset {}^{\perp_1} C$ if and only if all products of copies of $C$ are in $\Ccal_\omega$, if and only if $\Cogen C\subset\Ccal_\omega$. Now, since
  $\Ccal_\omega$ is always closed under submodules, and  $\Ccal_\omega\subset {}^{\perp_1} C$, the second condition means $\Cogen C\subset\Ccal_\omega$, which in turn  entails the first condition  $\Cogen C\subset {}^{\perp_1} C$.
 So it remains to show that  the first condition implies the second. 
We sketch an argument from  \cite[Lemma 4.13]{WZ}. Consider a cardinal $\kappa$ and a map $f: C^\kappa\to E_1$. The map $\omega$ factors as  $\omega=e\circ \omega'$ with  $e:\im\omega\hookrightarrow E_1$ being an injective envelope. Setting $Z=\Coker\omega$, we obtain  the following commutative diagram with exact rows
\begin{equation}
\xymatrix{&0\ar[r]&K\ar[d]^{h}\ar[r]^{i} & C^\kappa\ar[d]^{f}\ar[r]^{gf} & Z\ar @{=}[d]&\\
0\ar[r] & C\ar[r]^{} &E_0\ar[r]^{\omega} &E_1\ar[r]^{g} & Z\ar[r]&0}
\end{equation}
where $h$ is constructed by first taking the map $f':K\to \im\omega$ induced by $f$, and then lifting $f'$ to $h$ thanks to the fact that 
$K\in\Cogen C\subset {}^{\perp_1} C$. Now the injectivity of $E_0, E_1$ yields maps $s_0: C^\kappa\to E_0$ and $s_1:Z\to E_1$ such that  $f=\omega s_0+ s_1(gf)$, and one verifies $\im (f-\omega s_0)\cap\im \omega=0$. Since $\im \omega$ is an essential submodule of $E_1$, it follows  $f=\omega s_0$, as required. 
\end{proof}

\begin{remark}\label{prod}
 A module $M$ in a torsionfree class $\Fcal$ is said to be {\it Ext-injective} in $\Fcal$ if $\Ext^1_A(\Fcal, M)=0$.
Notice that the condition $\Cogen C\subset {}^{\perp_1} C$ implies that $(^oC,\Cogen C)$ is a torsion pair, and it can be rephrased by saying that $C$ is an  {Ext-injective} object in the torsionfree class $\Cogen C$.

Now assume that $C$ is a cosilting module with respect to an injective copresentation $\omega$. Since $\Ccal_\omega\subset {}^{\perp_1} C$, it follows that $C$ is Ext-injective in $\Cogen C$. More precisely, $\Prod C$ is the class of all Ext-injective modules in $\Cogen C$. Indeed, 
for every module  $M\in\Cogen C$ there is an embedding $f:M\to C'$ with $C'\in\Prod C$ and $\Coker f\in\Cogen C$. If $M$ is {Ext-injective} in $\Cogen C$, then $f$ is a split monomomorphism and $M\in\Prod C$.  For details we refer to \cite[Lemma 2.3 and 3.3, Proposition 3.10]{AMV1} where the dual statements are proved.
\end{remark}

\smallskip

The following result is inspired by the proof of  \cite[Proposition 4.13]{WZ}.
\begin{proposition}\label{constr}
Let $C$ be a left $A$-module which is Ext-injective in $\Cogen C$. Assume there are an injective cogenerator  $E$ of  $\lmod{A}$, a cardinal $\kappa$, and a right $\Prod C$-approximation $g:C^\kappa\to E$. Then $\Ker g\oplus C$ is a cosilting module with cosilting class $\Cogen C$.
\end{proposition}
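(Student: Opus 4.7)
Set $C' := \Ker g \oplus C$. My plan is a three-step verification that $C'$ is cosilting with $\Cogen C' = \Cogen C$. First, $\Cogen C' = \Cogen C$: the embedding $\Ker g \hookrightarrow C^\kappa$ places $\Ker g$ in $\Cogen C$, so $\Cogen C' \subseteq \Cogen C$; the reverse holds since $C$ is a direct summand of $C'$. Next I would verify $C'$ is Ext-injective in $\Cogen C$; only $\Ker g$ requires argument, as $C$ is Ext-injective by hypothesis. Given $M \in \Cogen C$, apply $\Hom_A(M, -)$ to $0 \to \Ker g \to C^\kappa \to \Img g \to 0$. Using $\Ext^1_A(M, C^\kappa) = \Ext^1_A(M, C)^\kappa = 0$, the vanishing of $\Ext^1_A(M, \Ker g)$ reduces to lifting each $f \colon M \to \Img g$ along $C^\kappa \twoheadrightarrow \Img g$. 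This is exactly where the approximation $g$ enters: composing $f$ with $\Img g \hookrightarrow E$, embedding $M \hookrightarrow C^\lambda$ (possible since $M \in \Cogen C$), extending to $\tilde f \colon C^\lambda \to E$ by injectivity of $E$, and factoring $\tilde f$ through $g$ by the right $\Prod C$-approximation property produces, upon restriction to $M$, the desired lift. A useful byproduct is that every map from a module in $\Cogen C$ to $E$ has image contained in $\Img g$.

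For the third step, I would construct an injective copresentation $0 \to C' \to E_0 \xrightarrow{\omega} E_1$ satisfying $\mathcal{C}_\omega = \Cogen C$. The key idea, inspired by \cite[Prop.~4.13]{WZ}, is to build $\omega$ so that it encodes the approximation $g$. A natural candidate takes $E_0 := \hat C^\kappa \oplus \hat C$ and $E_1 := E \oplus I'' \oplus \hat{I}_1^C$, with
\[
\omega(a, b) := (\hat g(a),\, \pi(a),\, \sigma_C(b)),
\]
where $\hat g \colon \hat C^\kappa \to E$ extends $g$, the map $\pi \colon \hat C^\kappa \to I''$ composes the quotient $\hat C^\kappa \twoheadrightarrow \hat C^\kappa/C^\kappa$ with its inclusion into a suitable injective $I''$, and $\sigma_C \colon \hat C \to \hat I_1^C$ is the second differential of a minimal injective copresentation of $C$. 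A direct check gives $\Ker \omega = \Ker g \oplus C = C'$.

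The inclusion $\mathcal{C}_\omega \subseteq \Cogen C$ falls out of the construction: given $X \in \mathcal{C}_\omega$ and a nonzero $x \in X$, the injective cogenerator property yields $\phi \colon X \to E$ with $\phi(x) \neq 0$; lifting $(\phi, 0, 0) \colon X \to E_1$ through $\omega$ produces $a \colon X \to \hat C^\kappa$ with $\hat g \circ a = \phi$ and $\pi \circ a = 0$. The second equation forces $a(X) \subseteq C^\kappa$, so $a \colon X \to C^\kappa$ satisfies $g \circ a = \phi$ and $a(x) \neq 0$; collecting over nonzero $x$ embeds $X$ into a product of copies of $C^\kappa$, so $X \in \Cogen C$. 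The reverse inclusion $\Cogen C \subseteq \mathcal{C}_\omega$ is the main technical obstacle. It combines the Ext-injectivity of $C'$ from Step 2, the byproduct that maps from $\Cogen C$ to $E$ land in $\Img g$, and a careful componentwise analysis of the lifting problem for $(f_1, f_2, f_3) \colon X \to E_1$. Following the pattern of \cite[Prop.~4.13]{WZ}, one lifts $f_2$ through $\pi$ using $\Ext^1_A(X, C^\kappa) = 0$ and then adjusts by a map into $C^\kappa$ to lift $f_1$ through $\hat g$ using $\Ext^1_A(X, \Ker g) = 0$; this final adjustment is precisely where the byproduct on maps to $E$ is essential.
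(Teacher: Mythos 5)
Your overall strategy is in the same spirit as the paper's: encode the approximation $g$ into an injective copresentation of $\Ker g\oplus C$ and check that the resulting class $\mathcal{C}_\omega$ equals $\Cogen C$. The paper packages this as a mapping cone in $\D(A)$: it lets $\omega:=K_{\tilde g}[-1]$ sit in a triangle $\omega\to\gamma^\kappa\stackrel{\tilde g}{\to}E^\bullet\to$, where $\gamma$ is a \emph{minimal} injective copresentation of $C$, and then reads off both inclusions of $\mathcal{C}_\omega=\Cogen C$ from the long exact $\Hom_{\D(A)}(X,-)$-sequence together with Lemma~\ref{corigid}. Your explicit map $\omega(a,b)=(\hat g(a),\pi(a),\sigma_C(b))$ is essentially the degree-zero incarnation of the paper's $\tilde\omega=\omega\oplus\gamma$, so the construction itself is sound, and your Step~1, Step~2, the byproduct on images of maps into $E$, and the inclusion $\mathcal{C}_\omega\subseteq\Cogen C$ are all correct.

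The gap is in the reverse inclusion $\Cogen C\subseteq\mathcal{C}_\omega$. You claim that $f_2\colon X\to I''$ lifts through $\pi$ ``using $\Ext^1_A(X,C^\kappa)=0$''. That vanishing only lifts maps that already land in $\Img\pi=\hat C^\kappa/C^\kappa$: applying $\Hom_A(X,-)$ to $0\to C^\kappa\to\hat C^\kappa\to\hat C^\kappa/C^\kappa\to 0$ gives surjectivity of $\Hom_A(X,\hat C^\kappa)\to\Hom_A(X,\hat C^\kappa/C^\kappa)$, nothing more. A general $f_2$ into $I''$ need not have image inside $\Img\pi$, and nothing in $\Ext^1_A(X,C^\kappa)=0$ forces it. What is really needed here is the essentiality argument of \cite[Lemma 4.13]{WZ}, i.e.\ the content of Lemma~\ref{corigid}(2): choose $I''$ to be the injective envelope of $\hat C^\kappa/C^\kappa$, form $Z=\Coker\pi$, restrict $f_2$ to $K=\Ker(X\to Z)$, lift using $\Ext^1_A(K,C^\kappa)=0$ (this is where Ext-vanishing enters, for the submodule $K$, not for $X$ directly), produce the two sections by injectivity of $\hat C^\kappa$ and $I''$, and conclude via essentiality of $\Img\pi$ in $I''$. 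The same issue recurs, untreated in your sketch, for the third component: lifting $f_3$ through $\sigma_C$ is exactly the statement that $X\in\mathcal{C}_{\sigma_C}$, which again is Lemma~\ref{corigid}(2) applied to the minimal copresentation of $C$, not a formal consequence of $\Ext^1_A(X,C)=0$. You cite ``the pattern of WZ Prop.~4.13'', which suggests you are aware of this, but the stated justification is not the one that makes the step work, and the $f_3$-component is omitted altogether. By contrast, the paper's derived-category formulation avoids these pitfalls: $\Hom_{\D(A)}(X,\gamma^\kappa[1])\cong\Hom_{\D(A)}(X,\gamma[1])^\kappa$ vanishes by Lemma~\ref{corigid}(2), the triangle's long exact sequence then forces $\Hom_{\D(A)}(X,\omega[1])=0$, and $\mathcal{C}_{\tilde\omega}=\mathcal{C}_\omega\cap\mathcal{C}_\gamma=\mathcal{C}_\omega$ falls out. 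I would recommend either invoking Lemma~\ref{corigid} explicitly for the $\pi$- and $\sigma_C$-components (after fixing $I''$ to be the injective envelope), or adopting the mapping-cone bookkeeping outright.
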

\begin{proof}
Take a minimal injective copresentation  $0\to C\longrightarrow E_{0}\stackrel{\gamma}{\longrightarrow} E_1$. Then $\gamma^\kappa$ is an injective copresentation of $C^\kappa$. The map  $g:C^\kappa\to E$ induces a map $\tilde{g}:E_0\,^\kappa\to E$, which  can be viewed as a map of complexes $\tilde{g}:\gamma^\kappa\to E^\bullet$ where $E^\bullet$ is the complex with $E$ concentrated in degree 0.  Note that $\tilde{g}$ has zero cohomology $g$. Considering the mapping cone  $K_{\tilde{g}}: E_0\,^\kappa\stackrel{(\tilde{g},\gamma^\kappa)}{\to} E\oplus E_1\,^\kappa$ and setting $\omega=K_{\tilde{g}}[-1]$, we obtain a triangle in $\D(A)$ $$\omega\to \gamma^\kappa\stackrel{\tilde{g}}{\to} E^\bullet\to $$ whose zero cohomologies give rise to the exact sequence $$0\to C_1\to C^\kappa\stackrel{g}{\to} E\qquad\qquad$$ with $C_1=\Ker g=H^0(\omega)$.

We claim that $\Cogen C=\C_\omega$. Indeed, for a module $X$, applying $\Hom_{\D(A)}(X,-)$ to the triangle above yields a long exact sequence $$\Hom_{\D(A)}(X,\gamma^\kappa)\stackrel{\Hom_{\D(A)}(X,\tilde{g})}{\longrightarrow}\Hom_{\D(A)}(X,E^\bullet)\to \Hom_{\D(A)}(X,\omega[1])\to\Hom_{\D(A)}(X,\gamma^\kappa[1])\to 0.$$
Now, since  $g:C^\kappa\to E$ is a right $\Prod C$-approximation of an injective cogenerator, $X\in\Cogen C$ if and only if $\Hom_A(X,{g})$ is surjective, which amounts to $\Hom_{\D(A)}(X,\tilde{g})$ being surjective.
Further, recall from Lemma~\ref{corigid}(1) that $X\in\Ccal_\omega$ if and only if  $\Hom_{\D(A)}(X,\omega[1])=0$. This immediately gives the inclusion $\Cogen C\supset\Ccal_\omega$.
For the reverse inclusion, use that $\Hom_{\D(A)}(X,\gamma[1])=0$ for all $X\in\Cogen C$  by assumption and Lemma~\ref{corigid}(2).

In order to prove that $\tilde{C}=C_1\oplus C$ is a cosilting module, we now consider its injective copresentation $\tilde{\omega}=\omega\oplus \gamma$. By construction 
$\Ccal_\omega\subset\Ccal_{\gamma^\kappa}=\Ccal_\gamma$. Therefore $\Ccal_{\tilde{\omega}}=\Ccal_\omega\cap\Ccal_\gamma=\Ccal_\omega=\Cogen C=\Cogen\tilde{C}$.
\end{proof}

We know from  \cite{AIR} that $\tau$-tilting modules are
 ``non-faithful tilting'' modules. The same holds true for silting modules, as shown in \cite{AMV1}.
 Here is the dual case.

\begin{theorem}\cite{WZ}\label{cosilting}
The following statements are equivalent for an $A$-module $C$.
\begin{enumerate}
\item $C$ is a cosilting module.
\item $\Cogen C$ is a torsionfree class, and $C$ is cotilting over $\overline{A}=A/\Ann C$.
\item $C$ is Ext-injective in $\Cogen C$, and there are an injective cogenerator  $E$ of  $\lmod{A}$ and  an exact sequence $0\to C_1\to C_0\stackrel{g}{\to} E$ such that $C_1,C_0\in\Prod C$ and $g$ is a right $\Cogen C$-approximation.
\end{enumerate}
\end{theorem}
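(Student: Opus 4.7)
The plan is to establish the cycle $(1)\Rightarrow(2)\Rightarrow(3)\Rightarrow(1)$. The last implication is the content of Proposition~\ref{constr}, the middle one is a transfer argument from $\overline{A}$ to $A$, and the first one is the heart of the theorem, passing from the cosilting condition over $A$ to the cotilting condition over the faithful quotient $\overline{A}=A/\Ann C$.

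For $(1)\Rightarrow(2)$: assume $C$ is cosilting with copresentation $\omega\dd E_{0}\to E_{1}$. Then $\Cogen C=\Ccal_\omega$ is closed under submodules, products and extensions, hence a torsionfree class, and every module in $\Cogen C$ is annihilated by $\Ann C$, so $\Cogen C$ is the same whether computed in $\lmod A$ or in $\lmod{\overline{A}}$. Using Lemma~\ref{corigid} and the inclusion $\Cogen C\subseteq{}^{\perp_1}C$ provided by Remark~\ref{prod}, one builds an $\overline{A}$-injective copresentation of $C$ by passing to the $\Ann C$-annihilated submodules of a minimal $A$-injective copresentation. One then verifies that this copresentation is epimorphic (so that $C$ has $\overline{A}$-injective dimension at most one) and satisfies $\Cogen C=\Ker\Ext^{1}_{\overline{A}}(-,C)$, which gives the cotilting condition over $\overline{A}$.

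For $(2)\Rightarrow(3)$: let $E$ be an injective cogenerator of $\lmod A$ and set $E'=\{e\in E\mid e\cdot\Ann C=0\}$. Then $E'$ is an $\overline{A}$-injective cogenerator of $\lmod{\overline{A}}$, because $\Hom_{\overline{A}}(M,E')=\Hom_{A}(M,E)$ for any $\overline{A}$-module $M$. Since $C$ is cotilting over $\overline{A}$, there is a short exact sequence $0\to C_{1}\to C_{0}\to E'\to 0$ with $C_{0},C_{1}\in\Prod C$ and $C_{0}\to E'$ a right $\Cogen C$-approximation in $\lmod{\overline{A}}$. Composing with the inclusion $E'\hookrightarrow E$ yields the exact sequence $0\to C_{1}\to C_{0}\to E$ required by (3), and it is a right $\Cogen C$-approximation in $\lmod A$ because every map from an $X\in\Cogen C\subseteq\lmod{\overline{A}}$ to $E$ factors through $E'$. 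Ext-injectivity of $C$ in $\Cogen C$ follows from Remark~\ref{prod}.

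For $(3)\Rightarrow(1)$ one simply observes that $\Prod C\subseteq\Cogen C$, so the given right $\Cogen C$-approximation is a right $\Prod C$-approximation, and Proposition~\ref{constr} yields that $C_{1}\oplus C$ is cosilting with cosilting class $\Cogen C$; because $C_{1}\in\Prod C$ we have $\Prod(C_{1}\oplus C)=\Prod C$, so $C$ is equivalent to the cosilting module $C_{1}\oplus C$ and hence itself cosilting. The main obstacle is $(1)\Rightarrow(2)$: producing an $\overline{A}$-cotilting copresentation from a cosilting copresentation requires verifying both the epimorphicity of the $\overline{A}$-cokernel and the equality $\Cogen C=\Ker\Ext^{1}_{\overline{A}}(-,C)$, which forces one to carefully exploit the torsionfreeness of $\Cogen C$ and to relate $\Ext^{1}_{A}$ and $\Ext^{1}_{\overline{A}}$ on modules annihilated by $\Ann C$; this is the point at which cosilting modules over $A$ are revealed as cotilting modules \emph{modulo} their annihilator.
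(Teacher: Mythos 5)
Your cycle $(1)\Rightarrow(2)\Rightarrow(3)\Rightarrow(1)$ matches the paper's, and $(2)\Rightarrow(3)$ is essentially correct, but the other two implications have genuine gaps. In $(1)\Rightarrow(2)$ the hard direction is $\Ker\Ext^{1}_{\overline{A}}(-,C)\subseteq\Cogen C$, and you only gesture at it. The paper proves it by choosing, for an $\overline{A}$-module $X$ with $\Ext^{1}_{\overline{A}}(X,C)=0$, a short exact sequence $0\to K\to L\to X\to 0$ with $K,L\in\Cogen C$ (possible because $\Cogen_{\overline{A}}C$ contains all projective $\overline{A}$-modules), then using the diagonal embedding $K\hookrightarrow C^{I}$ with cokernel again in $\Cogen C$, together with the surjectivity of $\Hom_{\overline{A}}(L,C^{I})\to\Hom_{\overline{A}}(K,C^{I})$, to build a commutative diagram of short exact sequences from which $X\in\Cogen C$ follows by closure under extensions, submodules and quotients. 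Your proposed route of constructing an $\overline{A}$-injective copresentation and separately checking it is epimorphic both omits this argument and is a detour: once $\Cogen_{\overline{A}}C=\Ker\Ext^{1}_{\overline{A}}(-,C)$ is known, the bound on the $\overline{A}$-injective dimension of $C$ is automatic, since $\Cogen C$ contains the $\overline{A}$-projectives and is closed under submodules.

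In $(3)\Rightarrow(1)$ the step ``$C$ is equivalent to the cosilting module $C_{1}\oplus C$ and hence itself cosilting'' is exactly where the content lies and is not a triviality. Equivalence is defined in the paper only between modules already known to be cosilting, so ``$\Prod C=\Prod(C_{1}\oplus C)$ and $C_{1}\oplus C$ cosilting'' does not by itself yield that $C$ is cosilting: you must exhibit an injective copresentation $\omega$ of $C$ itself with $\Ccal_{\omega}=\Cogen C$. The paper does this by taking minimal injective copresentations $\alpha$ of $C_{1}$ and $\gamma$ of $C$, writing the copresentation $\tilde{\omega}$ of $C_{1}\oplus C$ supplied by Proposition~\ref{constr} as $(\alpha\oplus\gamma)\oplus(0\to I)\oplus(I'\to I')$ for injectives $I,I'$, observing that $C_{1}\in\Prod C$ forces $\Ccal_{\gamma}\subseteq\Ccal_{\alpha}$, and concluding that $\omega=\gamma\oplus(0\to I)$ does the job. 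Without that analysis your conclusion does not follow. Also, Proposition~\ref{constr} requires the approximation to have source $C^{\kappa}$; you must first reduce to $C_{0}=C^{\kappa}$, which replaces $C_{1}$ by a larger module still lying in $\Prod C$ -- harmless, but it should be said.
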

\begin{proof} 
First of all, notice that $\Ann C=\Ann{\Cogen C}$
and $\Cogen _{\overline{A}}C=\Cogen _AC$. Moreover, if $\Cogen C$ is extension closed (which holds true in all three statements), then $\Ext^1_{\overline{A}}(\Cogen C,C)=0$ if and only if $\Ext^1_A(\Cogen C,C)=0$. Finally, note also that  $\Ann C=\bigcap_{h\in\Hom_A(A,C)}\Ker h$, so the class $\Cogen _{\overline{A}}C$ contains $\overline{A}$ and all projective $\overline{A}$-modules.

(1) $\Rightarrow$ (2): 
Every cosilting module satisfies  $\Cogen C\subset {}^{\perp_1} C$, thus $\Cogen _{\overline{A}}C\subset \Ker\Ext^1_{\overline{A}}(-,C)$. To verify that $_{\overline{A}}C$ is cotilting, it remains to show the reverse inclusion. Pick a left $\overline{A}$-module $X$ with $\Ext^1_{\overline{A}}(X,C)=0$. Since $\Cogen C$ contains all projective $\overline{A}$-modules, there is a short exact sequence $0\to K\to L\to X\to 0$ in $\lmod{\overline{A}}$ with  $K,L\in\Cogen C$. Now one proves (dually to the proof of \cite[Proposition 3.10]{AMV1}) that the diagonal map $ K\hookrightarrow C^I$ with $I=\Hom_A(K,C)$ has cokernel $M\in\Cogen C$. By assumption on $X$, the  map  $\Hom_{\overline{A}}(L,C^I)\to \Hom_{\overline{A}}(K,C^I)$ is surjective, yielding a map $f$ and a commutative diagram
\begin{equation}
\xymatrix{0\ar[r]&K\ar @{=}[d]\ar[r]^{} & L\ar[d]^{f}\ar[r] & X\ar[d]^{g}\ar[r] & 0\\
0\ar[r] & K\ar[r]^{} & C^I\ar[r]^{} & M\ar[r] & 0.}
\end{equation}
Since $L,M\in\Cogen C$, we infer  $\Ker g\cong\Ker f\in \Cogen C$, and $\im g\in \Cogen C$. But $\Cogen C$ is closed under extensions, so also $X\in\Cogen C$.

(2) $\Rightarrow$ (3):  If $E$ is an injective cogenerator of $\lmod{A}$, then $E'=\{x\in E\mid \Ann C\cdot x=0\}$ is an injective cogenerator of $\lmod{\overline{A}}$. By Theorem~\ref{att} there is a 
short exact sequence of $\overline{A}$-modules $0\to  C_1\to C_0\stackrel{g'}{\to} E'\to 0$ with $C_0\in\Cogen C$ and $C_1\in \Ker\Ext^1_{\overline{A}}(\Cogen C,-)$. Then $g'$ is a right $\Cogen C$-approximation.
 Further, since $E'\in \Ker\Ext^1_{\overline{A}}(\Cogen C,-)$, both $C_1$ and $C_0$ belong to $\Cogen C\cap\Ker\Ext^1_{\overline{A}}(\Cogen C,-)=\Prod C$. 
 
Keeping in mind that every module in $\Cogen C$ is also a $\overline{A}$-module, we conclude that   the map $g:C_0\stackrel{g'}{\to} E'\subset E$ is a right $\Cogen C$-approximation with  the stated properties. Finally, $C$ is Ext-injective in $\Cogen C$ by the first paragraph  of the proof.

(3) $\Rightarrow$ (1): We can assume w.l.o.g. that $C_0=C^\kappa$ for some cardinal $\kappa$. By Proposition~\ref{constr}, there is an injective copresentation $\tilde{\omega}$ of the module $\tilde{C}=C_1\oplus C$ such that $\Ccal_{\tilde{\omega}}=\Cogen \tilde{C}=\Cogen C$. Take  minimal injective copresentations $\alpha$ and $\gamma$ of $C_1$ and $C$, respectively. Then $\alpha\oplus \gamma$ is a minimal injective copresentation of $\tilde{C}$, hence there are injective modules $I,I'$ such that $\tilde{\omega}=(\alpha\oplus \gamma)\oplus (0\to I)\oplus (I'\stackrel{\rm id}{\to} I')$.
Further, it is easy to see that $C_1\in\Prod C$ implies $\Ccal_\gamma\subset\Ccal_\alpha$, so $\Ccal_{{\alpha\oplus \gamma}}=\Ccal_\gamma$.
We infer  $\Ccal_{\tilde{\omega}}=\Ccal_\gamma\cap\Ccal_{(0\to I)}$.
So the injective copresentation 
 $\omega=\gamma\oplus (0\to I)$ of $C$ satisfies $\Ccal_\omega=\Ccal_{\tilde{\omega}}=\Cogen C$. This completes the proof.\end{proof}

It was shown in \cite{B} that every cotilting module is pure-injective, and cotilting classes are  definable. The characterisation  given in Theorem~\ref{cosilting}(2) now allows to deduce the same properties  for  cosilting modules. 
\begin{corollary}\cite{BP,WZ}\label{pi} Every cosilting module is pure-injective, every cosilting class is definable.
\end{corollary}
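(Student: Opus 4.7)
The plan is to leverage Theorem~\ref{cosilting}(2), which identifies the cosilting $A$-module $C$ with a cotilting module over the quotient ring $\overline{A} = A/\Ann{C}$. By \cite{B}, ${}_{\overline{A}}C$ is then pure-injective and $\Cogen _{\overline{A}} C$ is a definable subclass of $\lmod{\overline{A}}$. The task reduces to transferring these two properties across the inclusion $\lmod{\overline{A}} \hookrightarrow \lmod{A}$.

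For pure-injectivity of $C$ as an $A$-module, I would verify the splitting criterion: every pure embedding $\iota \colon C \hookrightarrow N$ in $\lmod{A}$ must split. Applying the right-exact functor $\overline{A} \otimes_A -$ yields a morphism $C \cong \overline{A} \otimes_A C \to \overline{A} \otimes_A N$ in $\lmod{\overline{A}}$; $A$-purity of $\iota$ guarantees that this map is still a monomorphism, and in fact a pure embedding over $\overline{A}$. The cotilting case then supplies a retraction $r \colon \overline{A} \otimes_A N \to C$, and composing $r$ with the canonical projection $N \twoheadrightarrow \overline{A} \otimes_A N$ produces the desired $A$-linear retraction of $\iota$.

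For definability of $\Cogen _A C$, I would first observe that any $M \in \Cogen _A C$ embeds into a power of $C$ and is therefore annihilated by $\Ann{C}$; hence $\Cogen _A C$ and $\Cogen _{\overline{A}} C$ agree as abstract subcategories. Direct products and direct limits of $\overline{A}$-modules are computed identically in $\lmod{A}$ and $\lmod{\overline{A}}$, and submodules of $\overline{A}$-modules are automatically $\overline{A}$-modules. The only remaining point is that purity in $\lmod{A}$ and purity in $\lmod{\overline{A}}$ coincide on embeddings between $\overline{A}$-modules; this will follow from the natural isomorphism $X \otimes_A L \cong (X \otimes_A \overline{A}) \otimes_{\overline{A}} L$, valid for $X \in \ModA$ and $L \in \lmod{\overline{A}}$, which allows one to test $A$-purity against a right $A$-module $X$ by passing first to the right $\overline{A}$-module $X \otimes_A \overline{A}$.

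The principal obstacle is the bookkeeping needed to show that pure-exactness is correctly preserved and reflected under the projection $A \twoheadrightarrow \overline{A}$. Once this transfer principle is in place, both halves of the corollary follow formally from Theorem~\ref{cosilting}(2) together with the cotilting result of \cite{B}.
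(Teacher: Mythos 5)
Your argument follows the same route the paper sketches: invoke Theorem~\ref{cosilting}(2) to identify $C$ with a cotilting module over $\overline{A}=A/\Ann{C}$, apply Bazzoni's result from \cite{B}, and transfer pure-injectivity and definability back to $\lmod{A}$. The transfer details you supply (purity is preserved and reflected along $A\twoheadrightarrow\overline{A}$ on $\overline{A}$-modules, and the retraction over $\overline{A}$ composes with $N\twoheadrightarrow N/IN$ to split the original pure embedding) are correct and fill in exactly what the paper leaves implicit.
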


We are ready for a generalisation of Theorem~\ref{att} and \cite[Theorem 6.1]{Ba}.
\begin{theorem}\cite{WZ,BZ}\label{deftfree}
The following statements are equivalent for a  torsionfree class $\Fcal$ in $\lmod{A}$.
\begin{enumerate}
\item $\Fcal=\Cogen C$ for some cosilting module $C$.
\item For every left $A$-module $M$ there is an exact sequence $0\to K\to L\stackrel{g}{\to} M$ such that  $g$ is a right $\Fcal$-approximation and $K$ is Ext-injective in $\Fcal$.
\item Every left $A$-module admits a minimal right $\Fcal$-approximation.
\end{enumerate}
\end{theorem}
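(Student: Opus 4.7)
The plan is to prove the cycle \textbf{(1)} $\Rightarrow$ \textbf{(3)} $\Rightarrow$ \textbf{(2)} together with the direct construction \textbf{(3)} $\Rightarrow$ \textbf{(1)}, relying on Corollary~\ref{pi} (definability of cosilting classes), Proposition~\ref{constr} (construction of cosilting modules out of $\Prod C$-approximations of injective cogenerators), and a Wakamatsu-type argument exploiting that $\Fcal$ is a torsionfree class closed under extensions. The implication \textbf{(2)} $\Rightarrow$ \textbf{(1)} is then derived by first upgrading \textbf{(2)} to \textbf{(3)}.

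\smallskip

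For (1)$\Rightarrow$(3), Corollary~\ref{pi} tells us that $\Fcal$ is definable; combined with closure under products and submodules, this forces closure under direct limits, and general covering theory for definable classes (Enochs, Crawley-Boevey) then yields that every module admits a minimal right $\Fcal$-approximation. For (3)$\Rightarrow$(2), fix a minimal right $\Fcal$-approximation $g:L\to M$ with kernel $K$, take any extension $0\to K\to Y\to X\to 0$ with $X\in\Fcal$, and form the pushout $P$ of $K\hookrightarrow L$ and $K\hookrightarrow Y$. Then $P\in\Fcal$ by extension-closure, the canonical map $P\to M$ (factoring through $\im g$) lifts through $g$ to some $h:P\to L$, and the restriction $h|_L$ satisfies $g(h|_L)=g$. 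By minimality, $h|_L$ is an automorphism of $L$; since $g(h|_L)=g$ gives $h|_L(K)\subseteq K$, the induced retraction of $L\hookrightarrow P$ restricts to a retraction $Y\to K$, splitting the given extension and showing $K\in\Fcal^{\perp_1}$.

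\smallskip

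The crux is (3)$\Rightarrow$(1). Let $E$ be an injective cogenerator of $\lmod{A}$, and let $g:L\to E$ be a minimal right $\Fcal$-approximation with $K=\ker g$. First, $\Cogen L=\Fcal$: the inclusion $\subseteq$ is clear, and any $M\in\Fcal$ embeds into $E^I$ for some $I$, so lifting this embedding through the product approximation $g^I:L^I\to E^I$ (still a right $\Fcal$-approximation) yields a monomorphism $M\hookrightarrow L^I$. Next, $L$ is Ext-injective in $\Fcal$: given an extension $0\to L\stackrel{\iota}{\to} Y\to X\to 0$ with $X\in\Fcal$ (so $Y\in\Fcal$), push out along $g$ to obtain a sequence $0\to E\to Z\to X\to 0$ which splits by injectivity of $E$; the resulting retraction $Z\to E$ yields $\phi:Y\to E$ with $\phi\iota=g$, and lifting $\phi$ through $g$ gives $h:Y\to L$ with $g(h\iota)=g$. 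Minimality of $g$ forces $h\iota$ to be an automorphism of $L$, splitting the extension. Since $\Prod L\subseteq\Fcal$, the map $g$ is also a right $\Prod L$-approximation of $E$, so Proposition~\ref{constr} (with $C=L$ and $\kappa=1$) delivers the cosilting module $K\oplus L$ with cosilting class $\Cogen L=\Fcal$.

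\smallskip

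The main obstacle is the passage (2)$\Rightarrow$(1): the naive attempt, namely setting $C=L$ with $g:L\to E$ now coming from (2) and repeating the argument above, breaks down because (2) does not provide the minimality of $g$ required to deduce Ext-injectivity of $L$. The remedy is to first prove (2)$\Rightarrow$(3): condition (2) states precisely that $(\Fcal,\Fcal^{\perp_1})$ admits enough special precovers, so it is a complete cotorsion pair in which $\Fcal$ is additionally closed under submodules. Standard results on such cotorsion pairs (e.g.\ those of \v{S}\v{t}ov\'\i\v{c}ek on deconstructibility and direct-limit closure) then imply that $\Fcal$ is closed under direct limits, hence definable, whereupon the argument of (1)$\Rightarrow$(3) applies.
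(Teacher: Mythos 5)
Your implications (1)$\Rightarrow$(3) (definability of cosilting classes plus covering theory for definable classes), (3)$\Rightarrow$(2) (your pushout argument is a correct in-line proof of Wakamatsu's Lemma, which the paper merely cites), and (3)$\Rightarrow$(1) are all sound. The last one is in fact a nice direct argument that the paper does not spell out: you use minimality of $g\colon L\to E$ together with the injectivity of $E$ to split any extension $0\to L\to Y\to X\to 0$ with $X\in\Fcal$, then feed $g$ into Proposition~\ref{constr}.

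The gap is in (2)$\Rightarrow$(3). First, condition (2) does \emph{not} require $g$ to be an epimorphism, so the sequence $0\to K\to L\stackrel{g}{\to}M$ is an $\Fcal$-precover but not a \emph{special} $\Fcal$-precover in the cotorsion-pair sense, and it does not follow that $(\Fcal,\Fcal^{\perp_1})$ is a (complete) cotorsion pair: to conclude ${}^{\perp_1}(\Fcal^{\perp_1})\subseteq\Fcal$ one would precisely need surjectivity of $g$. Second, and more fundamentally, there is no ``standard result'' deriving closure of $\Fcal$ under direct limits from completeness of $(\Fcal,\Fcal^{\perp_1})$ plus closure under submodules; the results of \v{S}\v{t}ov\'\i\v{c}ek and Enochs you gesture at run essentially in the opposite direction (deconstructibility or direct-limit closure yields completeness/covers, not the converse). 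As written, you are tacitly assuming definability of $\Fcal$, which is what is to be proved. The paper avoids this by proving (2)$\Rightarrow$(1) directly: take $M=E$ an injective cogenerator, set $C=K\oplus L$ so that $\Fcal=\Cogen L=\Cogen C$, and verify the Ext-injectivity of $L$ required by the criterion in Theorem~\ref{cosilting}(3) by working over $\overline{A}=A/\Ann\Fcal$, where $E'=\{x\in E\mid \Ann C\cdot x=0\}$ is an injective cogenerator and the short exact sequence $0\to K\to L\to E'\to 0$ of $\overline{A}$-modules exhibits $L$ as an extension of Ext-injectives (this transfers back to $A$ because $\Fcal$ is extension closed). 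Replacing your (2)$\Rightarrow$(3) step with this argument repairs the proof.
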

Let us sketch the proof. If  $\Fcal$ is definable, then every module admits a minimal right $\Fcal$-approximation, see \cite[Corollary 2.6]{Ba} and the references therein. Moreover, Wakamatsu's Lemma ensures that every minimal right $\Fcal$-approximation has an Ext-injective kernel. Therefore (1) $\Rightarrow$ (3) $\Rightarrow$ (2). The implication (2) $\Rightarrow$ (1) is shown by looking at the special case when  $M=E$ is an injective cogenerator of $\lmod{A}$. In this case  $\Fcal=\Cogen L=\Cogen C$ for $C=K\oplus L$, and  by  Theorem~\ref{cosilting}, the  module $C$ is  cosilting provided it is Ext-injective in $\Fcal$. So it remains  to verify Ext-injectivity of $L$.  To this end, one works over ${\overline{A}}=\Ann C=\Ann\Fcal$, where the injective cogenerator $E'=\{x\in E\mid \Ann C\cdot x=0\}$ admits a short exact sequence $0\to K\to L\to E'\to 0$, and $L$ is in $\Ker\Ext^1_{\overline{A}}(\Fcal,-)$ since so are $K$ and $E'$  (cf.~the proof of Theorem~\ref{cosilting}).

\begin{corollary}\label{bijection}
The assignment $C\mapsto \Cogen C$ defines a bijection between
\begin{enumerate}
\item[(i)] equivalence classes of cosilting left $A$-modules;
\item[(ii)] definable torsionfree classes in  $\lmod{A}$.
\end{enumerate} 
\end{corollary}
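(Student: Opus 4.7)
The plan is to observe that the corollary is essentially a bookkeeping consequence of the results already established, so the proof amounts to checking that the assignment $C \mapsto \Cogen C$ is well-defined, has the claimed codomain, and that both injectivity and surjectivity are covered by earlier statements.

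First, I would verify well-definedness of the assignment on equivalence classes. By the very definition given in Section 3, two cosilting modules $C, C'$ are declared equivalent precisely when $\Cogen C = \Cogen C'$, so the map $C \mapsto \Cogen C$ factors through equivalence classes tautologically, and in fact injectivity on equivalence classes holds for the same tautological reason.

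Next I would check that the codomain is correct, namely that $\Cogen C$ genuinely is a definable torsionfree class in $\lmod{A}$ whenever $C$ is cosilting. The torsionfree property was recorded just before Theorem~\ref{att} (every cosilting class $\Ccal_\omega = \Cogen C$ is a torsionfree class, being an intersection of kernels of Hom-functors that is closed under submodules, products and extensions), and definability is precisely Corollary~\ref{pi}.

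For surjectivity, given any definable torsionfree class $\Fcal$ in $\lmod{A}$, I would invoke the implication (3)$\Rightarrow$(1) of Theorem~\ref{deftfree}: since $\Fcal$ is definable, every module admits a minimal right $\Fcal$-approximation, so by Theorem~\ref{deftfree} there exists a cosilting module $C$ with $\Fcal = \Cogen C$. This exhibits $[C]$ as a preimage of $\Fcal$.

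There is no substantive obstacle here; all the genuine work (the characterisation of cosilting classes via approximations in Theorem~\ref{deftfree}, and the definability in Corollary~\ref{pi}) has already been carried out. The only thing to be a little careful about is not conflating the two notions of equivalence: one could alternatively phrase equivalence via $\Prod C = \Prod C'$ as mentioned in Remark~\ref{prod}, but for this corollary the $\Cogen$ version is the one that makes the bijection immediate.
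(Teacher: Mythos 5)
Your proposal is correct and matches the reasoning the paper has in mind: injectivity and well-definedness are built into the definition of equivalence, the codomain is right because cosilting classes are torsionfree and definable (Corollary~\ref{pi}), and surjectivity follows because a definable torsionfree class admits minimal right approximations (the Bazzoni fact cited in the sketch of Theorem~\ref{deftfree}), so condition (3) there holds and (3)$\Rightarrow$(1) produces the cosilting module. You also correctly flag the only subtlety worth noting, namely that one should use the $\Cogen$ formulation of equivalence rather than the $\Prod$ one for the bijection to be immediate.
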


\smallskip

Notice that the statements above rely on the existence of {\em minimal} injective copresentations. Breaz and {\v Z}emli{\v c}ka show in \cite{BZ} that the dual version of Theorem~\ref{deftfree} holds true over perfect or hereditary rings. In general, however, one only has that the conditions dual to statements (2) and (3) in Theorem~\ref{cosilting} are equivalent  \cite[Proposition 3.2]{AMV1} and are satisfied by every silting module. In \cite[Proposition 3.10]{AMV1} this is phrased by saying that every silting module is {\em finendo quasitilting}.  But the converse is not true. A counterexample will be given in Example~\ref{notcoft}.

Moreover, there is a further asymmetry. Indeed,   definable classes give  rise to minimal right approximations, and also  to left approximations,  but not necessarily  to {\it minimal} left approximations. So not all silting (nor tilting)  classes satisfy the dual of  condition (3) in Theorem~\ref{deftfree}. Moreover, not all definable
torsion classes are generated by a finendo quasitilting module, an example is given in \cite[Proposition 7.2]{Ba}. However, we will see in the forthcoming section that definable torsion classes coincide with silting classes over noetherian rings.


\section{Duality} 
For a better understanding of the asymmetries discussed  above, 
we  need to review the interplay between definable subcategories of $\ModA$ and $\lmod{A}$.
Let us briefly recall that a subcategory $\mathcal{D}$ of $\ModA$ is definable if and only if it is the intersection of the kernels of a set of coherent functors $\ModA\to \Ab$, that is, of additive functors commuting with direct limits and direct products. Since the coherent functors are precisely the cokernels of morphisms of functors $\Hom_A(\sigma,-):\Hom_A(M,-)\longrightarrow\Hom_A(N,-)$  induced by morphisms $\sigma:N\to M$ in 
$\modA$, one obtains that $\Dcal$ is definable if and only if it is
of the form $\Dcal=\Dcal_\Sigma$ for a set $\Sigma$ of morphisms between finitely presented right $A$-modules, see e.g.~\cite[Sections 2.1 and 2.3]{CB}. 

Further, denoting by $(\modA,\Ab)$ and $(\lfmod{A},\Ab)$ the categories of additive covariant functors on $\modA$ and $\lfmod{A}$, respectively, one has an assignment  
$$(\modA,\Ab)\to (\lfmod{A},\Ab), F\mapsto F^{\vee}$$
where $F^\vee$ is defined on a module $N$ in $\lfmod{A}$ by
\[F^{\vee}(N)=\Hom(F, -\otimes N).\]
Notice that Hom-functors  $F=\Hom_A(M,-)$ given by $M\in\modA$ are mapped to $\otimes$-functors $F^\vee=-\otimes_A M$, see e.g.~\cite[Example 10.3.1]{P}.

This assignment was first studied by Auslander and by Gruson and Jensen. It 
induces a duality between the finitely presented objects in $(\modA,\Ab)$ and $(\lfmod{A},\Ab)$, and it allows to 
associate to every definable category  $\Dcal$ in $\ModA$ a \emph{dual definable category}  $\Dcal^\vee$ in $\lmod{A}$. 
Indeed,  $ \Dcal^\vee=\Fcal_\Sigma$ where $\Sigma$  is the maximal collection of   morphisms in $\modA$ such that $\Dcal=\Dcal_\Sigma$.
In this way one obtains a  bijection between definable subcategories of $\ModA$ and $\lmod{A}$ interchanging definable torsion classes  with definable torsionfree classes.  
For details we refer to \cite[Section 5]{Ba} and the references therein.

We now review some results from \cite{AH} explaining  how the assignment $\Dcal\mapsto\Dcal^\vee$ acts on the definable torsion or torsionfree classes given by silting and cosilting modules, respectively.
We fix a commutative ring $k$ such that $A$ is a $k$-algebra, and given an $A$-module $M$, we denote by $M^+$ its dual with respect to an injective cogenerator of $\rmod k$. For example, take $k=\Z$ and $M^+$ the character dual of $M$.

\begin{proposition}\label{dual} 
Let $\sigma \in \Mor{\ProjA}$.		
	\begin{enumerate}
\item   $\sigma^+$ is a morphism between injective left $A$-modules with $ \mathcal{C}_{\sigma^+} =\mathcal{F}_\sigma$.
\item  If $\Dcal_\sigma$ is a silting class, then $\Dcal_\sigma\,^\vee =\Fcal_{\sigma}$.	 
\item  If $T$ is a silting module with respect to $\sigma$, then $T^+$ is a cosilting module  with respect to   $\sigma^+$.
\end{enumerate}
		\end{proposition}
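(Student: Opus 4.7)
The plan is to prove (1), (2), (3) in order, with the hardest part being (3).

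For (1), I will exploit the character-dual adjunction $\Hom_A(X,M^+)\cong (M\otimes_A X)^+$ natural in the right $A$-module $M$ and the left $A$-module $X$. Applied to $\sigma:P_{-1}\to P_0$, this yields a natural isomorphism of complexes $\Hom_A(X,\sigma^+)\cong(\sigma\otimes_A X)^+$. Since $(-)^+$ is faithfully exact, $\Hom_A(X,\sigma^+)$ is surjective iff $(\sigma\otimes_A X)^+$ is surjective iff $\sigma\otimes_A X$ is injective, so $\Ccal_{\sigma^+}=\Fcal_\sigma$. The same adjunction shows $P^+$ is injective for $P$ projective, so $\sigma^+$ is a map between injective left $A$-modules.

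For (2), the silting hypothesis and Theorem~\ref{deft}(iii) give $\Dcal_\sigma=\Dcal_\Sigma$ for some $\Sigma\subset\Mor\projA$. The Auslander--Gruson--Jensen duality $F\mapsto F^\vee$ recalled in Section~4 sends the definable class $\Dcal_\Sigma$ to $\Fcal_\Sigma$, so $\Dcal_\sigma^\vee=\Fcal_\Sigma$. It remains to check $\Fcal_\Sigma=\Fcal_\sigma$. In fact the argument of (1) gives, for \emph{any} $\tau\in\Mor\ProjA$, the intrinsic equivalence $N\in\Fcal_\tau$ iff $N^+\in\Dcal_\tau$; applying this to both $\sigma$ and the elements of $\Sigma$, one obtains $N\in\Fcal_\sigma$ iff $N^+\in\Dcal_\sigma=\Dcal_\Sigma$ iff $N\in\Fcal_\Sigma$, so $\Dcal_\sigma^\vee=\Fcal_\sigma$.

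For (3), dualizing $P_{-1}\to P_0\to T\to 0$ yields an injective copresentation $0\to T^+\to P_0^+\to P_{-1}^+$ involving $\sigma^+$, so I only need $\Cogen T^+=\Ccal_{\sigma^+}$. By (1) and (2), $\Ccal_{\sigma^+}=\Fcal_\sigma=(\Gen T)^\vee=\{N\mid N^+\in\Gen T\}$, reducing the task to the equivalence $N\in\Cogen T^+\Leftrightarrow N^+\in\Gen T$. The direction $\Leftarrow$ is immediate: a surjection $T^{(I)}\twoheadrightarrow N^+$ dualizes to $N^{++}\hookrightarrow(T^{(I)})^+\cong(T^+)^I$, which composed with the natural embedding $N\hookrightarrow N^{++}$ places $N$ in $\Cogen T^+$. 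The direction $\Rightarrow$ is the main technical point: an embedding $N\hookrightarrow(T^+)^I\cong(T^{(I)})^+$ dualizes to a surjection $(T^{(I)})^{++}\twoheadrightarrow N^+$, so by closure of the torsion class $\Gen T$ under quotients it suffices to prove $(T^{(I)})^{++}\in\Gen T$. Writing $\Gen T=\Dcal_\Sigma$ as in (2), membership $M\in\Dcal_\Sigma$ is equivalent to the vanishing $M\otimes_A\Coker(\tau^*)=0$ for each $\tau\in\Sigma$, and the identity $M^{++}\otimes_A L\cong(M\otimes_A L)^{++}$, valid for every finitely presented left $A$-module $L$ (exactness of $(-)^{++}$ together with commutativity with finite direct sums, applied to a finite presentation of $L$), forces this vanishing to persist from $M$ to $M^{++}$. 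Hence $T^{(I)}\in\Gen T$ yields $(T^{(I)})^{++}\in\Gen T$, and the proof is complete.

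The principal obstacle is this last closure of a silting class under the double character dual: it is what forces the detour through a describing set $\Sigma\subset\Mor\projA$ from Theorem~\ref{deft} rather than working directly with the possibly non-finitely-generated $\sigma$, since the tensor-dual identity above is only available once the test modules $\Coker(\tau^*)$ are finitely presented.
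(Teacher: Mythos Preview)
The paper does not supply its own proof of this proposition; it is stated as a result imported from \cite{AH}. Your argument is correct and self-contained, so there is nothing to compare against here. A couple of remarks on presentation:

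For (2), your appeal to the Auslander--Gruson--Jensen duality to conclude $\Dcal_\Sigma\,^\vee=\Fcal_\Sigma$ is slightly loose, since the paper \emph{defines} $\Dcal^\vee$ via the \emph{maximal} collection $\Sigma_{\max}\subset\Mor{\modA}$ with $\Dcal=\Dcal_{\Sigma_{\max}}$, not via your particular $\Sigma\subset\Mor{\projA}$ coming from Theorem~\ref{deft}. But your own observation that $N\in\Fcal_\tau\Leftrightarrow N^+\in\Dcal_\tau$ for every $\tau$ (which holds for any $\tau$ in $\Mor{\modA}$, not just in $\Mor{\ProjA}$) immediately gives $\Fcal_\Sigma=\Fcal_{\Sigma_{\max}}$ whenever $\Dcal_\Sigma=\Dcal_{\Sigma_{\max}}$, so the gap closes itself. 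In fact this same observation yields the intrinsic description $\Dcal^\vee=\{N\mid N^+\in\Dcal\}$ for any definable $\Dcal$, which you use in (3) without quite isolating it.

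For (3), your key step---closure of $\Gen T=\Dcal_\Sigma$ under the double dual $(-)^{++}$---is handled correctly via the isomorphism $M^{++}\otimes_A L\cong (M\otimes_A L)^{++}$ for finitely presented $L$. A shorter route, once you have the description $\Dcal^\vee=\{N\mid N^+\in\Dcal\}$ and the fact that $(\Dcal^\vee)^\vee=\Dcal$, is simply: $M\in\Dcal\Rightarrow M^+\in\Dcal^\vee\Rightarrow M^{++}\in\Dcal$. Your explicit tensor computation has the merit of making transparent exactly where the finite-type condition on $\Sigma$ enters, which is the point you emphasise at the end.
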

				
\begin{definition}\label{cofint} A cosilting left $A$-module $C$ (or the cosilting class $\Cogen C$) is said to be {\em of cofinite type} if there is a set $\Sigma\subset\Mor\projA$ such that $\Cogen C=\Fcal_\Sigma$.
		\end{definition}

\begin{corollary}\label{duality} The assignment $\Dcal\mapsto \Dcal^\vee$ defines a bijection between silting classes  in $\ModA$ and cosilting classes of cofinite type in $\lmod{A}$. \end{corollary}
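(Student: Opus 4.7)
The plan is to combine Theorem~\ref{deft} (which characterises silting classes as those of the form $\Dcal_\Sigma$ with $\Sigma \subset \Mor\projA$) with Proposition~\ref{dual} and the general duality between definable subcategories of $\ModA$ and $\lmod{A}$. The key computational fact to establish is that when $\Dcal = \Dcal_\Sigma$ is silting, one has $\Dcal^\vee = \Fcal_\Sigma$. Once this is in place, the bijection is immediate because cosilting classes of cofinite type are, by Definition~\ref{cofint}, exactly the cosilting classes of the form $\Fcal_\Sigma$ for some $\Sigma \subset \Mor\projA$.

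First I would check that the assignment is well-defined. Let $\Dcal$ be a silting class. By Theorem~\ref{deft}(iii), $\Dcal = \Dcal_\Sigma$ for some $\Sigma \subset \Mor\projA$. Set $\sigma = \bigoplus_{\alpha \in \Sigma} \alpha \in \Mor\ProjA$; since Hom and $\otimes$ commute with direct sums in the appropriate variable, we have $\Dcal_\sigma = \Dcal_\Sigma = \Dcal$ and $\Fcal_\sigma = \Fcal_\Sigma$. Hence Proposition~\ref{dual}(2) gives $\Dcal^\vee = \Fcal_\sigma = \Fcal_\Sigma$. To see that $\Fcal_\Sigma$ is actually a cosilting class, pick a silting module $T$ with a projective presentation $\tau$ realising $\Dcal = \Gen T = \Dcal_\tau$. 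Then Proposition~\ref{dual}(3) says $T^+$ is a cosilting module with $\Cogen T^+ = \Ccal_{\tau^+} = \Fcal_\tau$, and Proposition~\ref{dual}(2) applied to $\tau$ forces $\Fcal_\tau = \Dcal_\tau^\vee = \Dcal^\vee = \Fcal_\Sigma$. Thus $\Dcal^\vee = \Cogen T^+$ is a cosilting class, and it is of cofinite type since $\Sigma \subset \Mor\projA$.

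Next I would prove surjectivity. Suppose $\Cogen C = \Fcal_\Sigma$ is a cosilting class of cofinite type, with $\Sigma \subset \Mor\projA$. By Theorem~\ref{deft}(iii), $\Dcal := \Dcal_\Sigma$ is a silting class in $\ModA$, and the same direct-sum trick together with Proposition~\ref{dual}(2) gives $\Dcal^\vee = \Fcal_\Sigma = \Cogen C$. Hence every cosilting class of cofinite type is hit. Finally, injectivity is automatic: both silting classes and cosilting classes are definable (by \cite[3.5 and 3.10]{AMV1} and Corollary~\ref{pi}), so the restriction of the canonical bijection between definable subcategories of $\ModA$ and $\lmod{A}$ to silting classes is injective by construction.

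The only genuinely non-cosmetic point is the step $\Dcal_\Sigma^\vee = \Fcal_\Sigma$ for a \emph{set} $\Sigma$, whereas Proposition~\ref{dual}(2) is stated for a single morphism. This is the main obstacle but it is mild: it is absorbed by replacing $\Sigma$ with its direct sum $\sigma \in \Mor\ProjA$ and invoking the single-morphism statement. Everything else is a formal consequence of the duality of definable subcategories and the characterisation in Theorem~\ref{deft}.
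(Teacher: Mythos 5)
Your proof is correct and follows exactly the route the paper implies: since the corollary is stated without an explicit proof, it is evidently meant to be read off by combining Theorem~\ref{deft}(iii), Proposition~\ref{dual}(2)--(3), Definition~\ref{cofint}, and the general duality bijection between definable subcategories. The only small attribution slip is that the chain $\Cogen T^+ = \Ccal_{\tau^+} = \Fcal_\tau$ uses Proposition~\ref{dual}(1) for the second equality, not part (3), but this does not affect the argument.
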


If we restrict to tilting classes, that is, to classes $\Dcal=\Dcal_\Sigma$ with $\Sigma$ consisting of monomorphisms in $\Mor\projA$, then we recover the bijection between tilting classes and cotilting classes of cofinite type established in \cite{Ba}. Indeed, in this case $\Fcal_\Sigma =\Ker \Tor_1^A(\Scal, -)$
where $\Scal$ is the set of finitely presented right $A$-modules of projective dimension at most one that arise as cokernels of the monomorphisms in $\Sigma$. So Definition~\ref{cofint} agrees with the  definition  of a  cotilting class  of cofinite type in \cite{Ba}. 

\medskip

Here is a useful criterion for a torsionfree  class to be  cosilting of cofinite type.
\begin{lemma}\label{coft}
A torsion pair $(\mathcal{T},\mathcal{F})$ in $\lmod{A}$ is generated by  a set of finitely presented left $A$-modules if and only if $\mathcal{F}$ is a cosilting class of cofinite type.
\end{lemma}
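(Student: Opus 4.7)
The plan is to package both directions via a single duality dictionary. For $\sigma\in\Mor\projA$, the dual morphism $\sigma^{\ast}\colon Q^{\ast}\to P^{\ast}$ lies between finitely generated projective \emph{left} $A$-modules, so $S_\sigma:=\Coker(\sigma^{\ast})$ is a finitely presented left $A$-module. Using the natural isomorphism $\Hom_A({}_A P,X)\cong P^{\ast}\otimes_A X$, valid for every finitely generated projective left module $P$, the map $\Hom_A(\sigma^{\ast},X)$ corresponds to $\sigma\otimes_A X$, yielding the crucial equivalence
\[\Hom_A(S_\sigma,X)=0\ \Longleftrightarrow\ \sigma\otimes_A X\text{ is injective}\]
for every left $A$-module $X$. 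Conversely, any finitely presented left module $S$ with presentation $Q\xrightarrow{\alpha}P\to S\to 0$ by f.g.\ projective left modules returns a morphism $\alpha^{\ast}\in\Mor\projA$ satisfying the same dictionary, so the assignment $\sigma\leftrightarrow S_\sigma$ works in both directions.

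For $(\Leftarrow)$, assume $\mathcal{F}$ is cosilting of cofinite type, so by Definition~\ref{cofint} there is $\Sigma\subset\Mor\projA$ with $\mathcal{F}=\mathcal{F}_{\Sigma}$. The dictionary rewrites this as $\mathcal{F}=\{X\in\lmod{A}\mid\Hom_A(S_\sigma,X)=0\ \forall\sigma\in\Sigma\}=\mathcal{S}^{o}$ for the set $\mathcal{S}=\{S_\sigma\mid\sigma\in\Sigma\}$ of finitely presented left modules, exhibiting $(\mathcal{T},\mathcal{F})=({}^{o}\mathcal{F},\mathcal{F})$ as the torsion pair generated by $\mathcal{S}$.

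For $(\Rightarrow)$, suppose $(\mathcal{T},\mathcal{F})$ is generated by a set $\mathcal{S}$ of finitely presented left modules, so $\mathcal{F}=\mathcal{S}^{o}$. Choosing a projective presentation of each $S\in\mathcal{S}$ by f.g.\ projective left modules and dualizing produces $\sigma_S\in\Mor\projA$, and the dictionary gives $\mathcal{F}=\mathcal{F}_{\Sigma}$ with $\Sigma=\{\sigma_S\}$, the cofinite-type shape demanded by Definition~\ref{cofint}. The remaining task, and what I expect to be the main obstacle, is to verify that $\mathcal{F}$ is a genuine cosilting class. Here I would invoke Corollary~\ref{bijection}, which identifies cosilting classes with definable torsionfree classes, and reduce to showing definability of $\mathcal{F}$. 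Finite presentation of the generators supplies exactly this: for $S\in\lfmod{A}$ the functor $\Hom_A(S,-)$ is coherent and hence commutes with direct limits and direct products, while closure of $\mathcal{F}$ under pure submodules is automatic since pure inclusions are injective. Thus $\mathcal{F}$ is a definable torsionfree class, Corollary~\ref{bijection} applies, and the proof closes.
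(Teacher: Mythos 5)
Your proof is correct and takes essentially the same route as the paper: both directions hinge on the dictionary $\sigma\leftrightarrow S_\sigma$ obtained by dualizing a finite projective presentation and using the natural isomorphism $\Hom_A(\sigma^{\ast},X)\cong\sigma\otimes_A X$ for $\sigma\in\Mor\projA$. The paper's proof is terser (it constructs the set $\Sigma$ with $\mathcal{F}=\mathcal{F}_\Sigma$ and says ``the other implication is proven similarly''), and in particular it leaves implicit the check that the resulting $\mathcal{F}$ is actually a cosilting class rather than merely of the shape $\mathcal{F}_\Sigma$; you fill that in cleanly by observing that a torsionfree class cut out by $\Hom_A(S,-)=0$ with $S$ finitely presented is closed under direct limits, products, and (pure) submodules, hence definable, and then invoking Corollary~\ref{bijection}. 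That added step is a genuine and worthwhile clarification, not a detour.
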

\begin{proof} Let  $\mathcal{U}$ be a set in $\lfmod{A}$ such that $\mathcal{F}=\{M\in \lmod{A}\,\mid\, \Hom_{A}({U},{M})=0 \text{ for all } U\in \mathcal{U}\}.$ 
 Choosing a projective presentation
 $\alpha_U\in\Mor\Aproj$ for each $U\in\mathcal{U}$ and applying $^\ast =\Hom_A(-,A)$ on it, we obtain a set $\Sigma=\{\alpha_U\,^\ast\,\mid\,U\in\Ucal\}\subset\Mor\projA$
such that $\mathcal{F}= \mathcal{F}_{\Sigma}.$
The other implication is proven similarly.
\end{proof}

\medskip

Now recall  that over a left noetherian ring every torsion pair  $(\mathcal{T},\mathcal{F})$  in $\lmod{A}$ restricts to a torsion pair $(\mathcal{U},\mathcal{V})$ in $\lfmod{A}$ with $\Ucal=\Tcal\cap\lfmod{A}$ and  $\Vcal=\Fcal\cap\lfmod{A}$, and moreover, taking direct limit closures, the latter torsion pair $(\mathcal{U},\mathcal{V})$ extends to a torsion pair $(\varinjlim\mathcal{U},\varinjlim\mathcal{V})$
in $\lmod{A}$, see \cite[Lemma 4.4]{CB1}. So, if
 $\Fcal$  is definable, that is, closed under direct limits, we see that  $(\mathcal{T},\mathcal{F})=(\varinjlim\mathcal{U},\varinjlim\mathcal{V})$ is generated by $\Ucal$. Using Lemma~\ref{coft}, we  conclude that every definable torsionfree class in $\lmod{A}$
is a  cosilting class of cofinite type. This also implies that
every definable torsion class in $\ModA$ is a  silting class. 

\begin{theorem}\cite[Corollary 3.8]{AH}\label{coftnoe}
 If $A$ is a left noetherian ring,  the assignment $\Dcal\mapsto \Dcal^\vee$ defines a bijection between silting classes  in $\ModA$ and cosilting classes in $\lmod{A}$, and there are bijections 
  between
\begin{enumerate}
\item[(i)]  equivalence classes of  silting right $A$-modules;
\item[(ii)] equivalence classes of  cosilting left $A$-modules;
\item[(iii)] definable torsion classes in $\ModA$;
\item[(iv)]  definable torsionfree classes in $\lmod{A}$.
\end{enumerate}
The bijection  $(i)\to (iii)$ is given by the assignment $T\mapsto \Gen T$, the bijection $(ii)\to (iv)$ is defined dually.
\end{theorem}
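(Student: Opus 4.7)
The core input is the observation, essentially assembled already in the paragraph preceding the theorem, that over a left noetherian ring every definable torsionfree class in $\lmod{A}$ is cosilting of cofinite type. First, I would record this step precisely: given a definable torsionfree class $\Fcal \subseteq \lmod{A}$ with associated torsion pair $(\Tcal, \Fcal)$, Crawley-Boevey's \cite[Lemma 4.4]{CB1} yields a restricted torsion pair $(\Ucal, \Vcal) = (\Tcal \cap \lfmod{A}, \Fcal \cap \lfmod{A})$ in $\lfmod{A}$ whose direct-limit closure is again a torsion pair in $\lmod{A}$. Since $\Fcal$ is closed under direct limits, it coincides with $\varinjlim \Vcal$, so $(\Tcal, \Fcal)$ is generated by the set $\Ucal \subset \lfmod{A}$, and Lemma~\ref{coft} then shows $\Fcal$ is cosilting of cofinite type.

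Combining this with Corollary~\ref{duality}, which already gives a bijection between silting classes in $\ModA$ and cosilting classes of cofinite type in $\lmod{A}$, the preceding step upgrades the target to all cosilting classes in $\lmod{A}$, establishing the first assertion of the theorem. Under the duality $\Dcal \mapsto \Dcal^{\vee}$ of definable categories, definable torsion classes correspond to definable torsionfree classes, so to complete the four-way bijection it suffices to identify silting classes with definable torsion classes on the right, and cosilting classes with definable torsionfree classes on the left.

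Next, the bijections (i) $\leftrightarrow$ silting classes and (ii) $\leftrightarrow$ (iv) are already in place: the first follows from the definition of silting equivalence together with Theorem~\ref{deft}, and the second is Corollary~\ref{bijection}. For (iii), silting classes in $\ModA$ are always definable torsion classes by Theorem~\ref{deft} and the discussion after it. For the converse, if $\Dcal \subset \ModA$ is any definable torsion class, then $\Dcal^{\vee}$ is a definable torsionfree class in $\lmod{A}$, hence cosilting of cofinite type by the first step; so $\Dcal^{\vee} = (\Dcal')^{\vee}$ for some silting class $\Dcal'$, and injectivity of the duality $\Dcal \mapsto \Dcal^{\vee}$ forces $\Dcal = \Dcal'$. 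This proves every definable torsion class is silting and closes the loop.

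The main obstacle is conceptual rather than technical: one must spot that Crawley-Boevey's restriction-extension lemma over a noetherian ring is exactly the tool that promotes the ``closed under direct limits'' condition into the ``generated by finitely presented objects'' condition needed to feed Lemma~\ref{coft}. Once this single implication is secured, the four bijections follow by stitching together Section~2, Corollary~\ref{bijection}, and Corollary~\ref{duality}, with the stated description $T \mapsto \Gen T$ being read off from the construction in Section~2 and its dual from Corollary~\ref{bijection}.
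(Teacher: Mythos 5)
Your proposal is correct and matches the paper's own proof: the paper establishes the theorem in the paragraph immediately preceding it, invoking Crawley-Boevey's restriction–extension lemma for torsion pairs over a noetherian ring to show that a definable torsionfree class is generated by finitely presented modules, then applying Lemma~\ref{coft} and Corollary~\ref{duality}. You simply unpack the final clause (``this also implies that every definable torsion class in $\ModA$ is a silting class'') by running the duality $\Dcal \mapsto \Dcal^\vee$ explicitly, which is exactly the intended reading.
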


The theorem above   can be regarded as a ``large'' version  of the following result for  finite dimensional algebras due to Adachi-Iyama-Reiten.
\begin{theorem}\cite[Theorem 2.7]{AIR}\label{AIR} If $A$ is a finite dimensional algebra over a field, there are bijections between
\begin{enumerate}
\item[(i)]  isomorphism classes of basic support $\tau$-tilting (i.e.~finite dimensional silting) right $A$-modules;
\item[(ii)] isomorphism classes of basic finite dimensional cosilting left $A$-modules;
\item[(iii)] functorially finite torsion classes in $\modA$;
\item[(iv)]  functorially finite torsionfree classes in $\lfmod{A}.$
\end{enumerate}
The bijection  $(i)\to (iii)$ is given by the assignment $T\mapsto \gen T=\Gen T\cap\modA$, the bijection $(ii)\to (iv)$ is defined dually.
\end{theorem}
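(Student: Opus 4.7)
The plan is to derive Theorem~\ref{AIR} from the large version Theorem~\ref{coftnoe} by cutting the bijections there down to the finite dimensional level. A finite dimensional algebra $A$ is in particular noetherian, so Theorem~\ref{coftnoe} already supplies a correspondence between equivalence classes of silting right $A$-modules, equivalence classes of cosilting left $A$-modules, definable torsion classes in $\ModA$, and definable torsionfree classes in $\lmod{A}$. The remaining task is to match each of the four lists (i)--(iv) in Theorem~\ref{AIR} with the corresponding sub-collection inside Theorem~\ref{coftnoe}, and to check that the four bijections restrict accordingly.

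For (i)$\leftrightarrow$(iii), I would argue as follows. If $T \in \rfmod{A}$ is finite dimensional silting, then $\gen T = \Gen T \cap \modA$ is a torsion class in $\modA$. Contravariant finiteness is provided by the trace $d(M) \hookrightarrow M$ of $T$ in $M$, which already lies in $\modA$. Covariant finiteness follows from a standard evaluation construction: since $\Hom_A(M,T)$ is a finite dimensional $k$-vector space for $M \in \modA$, a $k$-basis $f_1, \ldots, f_d$ yields a left $\gen T$-approximation $M \to T^d$ with target in $\modA$. Moreover, since $A$ is noetherian, $\Gen T$ is recovered from $\gen T$ as its direct limit closure in $\ModA$, so the assignment $T \mapsto \gen T$ is injective on equivalence classes by Theorem~\ref{coftnoe}. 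Conversely, starting from a functorially finite torsion class $\Ucal \subseteq \modA$, I would build $T$ as the basic additive generator of the Ext-projective objects in $\Ucal$, i.e.\ those $M \in \Ucal$ with $\Ext_A^1(M,\Ucal) = 0$. Covariant finiteness of $\Ucal$ delivers a minimal left $\Ucal$-approximation $f\colon A \to T_0$ inside $\modA$, and $T_0$ together with a minimal Ext-projective completion $T_1$ of $\Coker f$ is the desired support $\tau$-tilting module; verifying that $T = T_0 \oplus T_1$ satisfies $\gen T = \Ucal$ is the Bongartz-style completion argument of AIR.

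For (ii)$\leftrightarrow$(iv), I would run the dual argument over $\lmod{A}$, using Theorem~\ref{deftfree} in place of its silting analogue and Ext-injective objects in place of Ext-projectives. For (i)$\leftrightarrow$(ii), I would pass through the standard $k$-linear duality $D = \Hom_k(-,k) \colon \rfmod{A} \to \lfmod{A}$: given a finite dimensional basic silting $T$, the module $DT$ is a finite dimensional left $A$-module, and $D$ carries $\Gen T$ bijectively to $\Cogen DT$, while applying $D$ to the silting exchange sequence of $T$ produces an injective copresentation of $DT$ fulfilling condition (3) of Theorem~\ref{cosilting}, so that $DT$ is cosilting. The main obstacle will be the direction (iii)$\to$(i), the Bongartz-style completion; the other implications are comparatively formal consequences of the large bijections already established and of the duality $D$.
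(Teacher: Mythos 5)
The theorem you are proving is stated in the paper as a citation to Adachi--Iyama--Reiten, \cite[Theorem 2.7]{AIR}; the paper gives no proof of its own, only pointing out that Theorem~\ref{coftnoe} is a ``large'' analogue of it. So your plan of \emph{deriving} the classical AIR statement from Theorem~\ref{coftnoe} is already going in the opposite direction from the exposition, and more importantly it does not actually save the hard work: you concede that the implication (iii)$\to$(i) is the Bongartz-style completion, but that step is precisely the technical heart of the original AIR proof and is not a consequence of Theorem~\ref{coftnoe}. The large theorem guarantees that a definable torsion class $\varinjlim\Ucal$ is generated by \emph{some} silting module, but it gives no control on its size; showing that one can choose a finite dimensional representative is exactly what AIR prove directly via Ext-projectives. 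So the appeal to Theorem~\ref{coftnoe} here is decorative rather than structural.

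There is also a concrete error in the step establishing covariant finiteness of $\gen T$. You claim that if $f_1,\dots,f_d$ is a $k$-basis of $\Hom_A(M,T)$, then the induced map $M\to T^d$ is a left $\gen T$-approximation. That map is only a left $\add T$-approximation, and these are not the same. A minimal counterexample: let $A$ be the path algebra of $1\to 2$, let $T=A$ (so $\gen T=\modA$), and let $M=S_1$ be the simple at the source. Then $\Hom_A(S_1,A)=0$, so your evaluation map is $S_1\to 0$, which is certainly not a left $\modA$-approximation of $S_1$ (the identity of $S_1$ does not factor through it). The class $\gen T$ is indeed covariantly finite, but the correct reason is the Auslander--Smal{\o} characterisation: a torsion class $\Tcal$ in $\modA$ is functorially finite if and only if $\Tcal=\Gen X$ for some $X\in\modA$, applied with $X=T$. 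Alternatively, one can push out the approximation sequence $A\to T_0\to T_1\to 0$ of Proposition~\ref{Prop silting} against an arbitrary $M\in\modA$. Either way, the evaluation map to $T^d$ has to be abandoned.
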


Next, we apply the criterion in Lemma~\ref{coft} to hereditary torsion pairs. Recall that a torsion pair $(\mathcal{T},\mathcal{F})$ is {\it hereditary} if the torsion class $\mathcal{T}$ is closed under submodules, or equivalently, the torsionfree class $\mathcal{F}$ is closed under injective envelopes.  
Moreover, $(\mathcal{T},\mathcal{F})$ has {\em finite type} if $\mathcal{F}$ is closed under direct limits.

Hereditary torsion pairs are in bijection with the Gabriel topologies on $A$. We refer to \cite[Ch.VI]{Ste} or \cite[Sections 11.1.1 and 11.1.2]{P} for details. Here we only mention that every hereditary torsion pair $(\mathcal{T},\mathcal{F})$ in $\lmod{A}$ is associated to a   {\em Gabriel filter} $\Gcal$ which  is formed  by  the left ideals $I$ of $A$ such that $A/I\in\Tcal$. When the torsion pair is of finite type, then $\Gcal$ has a basis of finitely generated ideals, that is, every ideal in $\Gcal$ contains a finitely generated ideal from $\Gcal$. This implies that the torsion pair $(\mathcal{T},\mathcal{F})$ is generated by the set $\Ucal=\{A/I\,\mid\, I\in\Gcal\text{ finitely generated }\}\subset\lfmod{A}$, and so $\mathcal{F}$ is a cosilting class  of  cofinite type.
The dual definable category $\Fcal^\vee$ is then given by the right $A$-modules with $M\otimes A/I=0$ for all finitely generated ideals $I\in\Gcal$, and one easily checks that this amounts to $M$ being $I$-{\em divisible}, i.e.~$MI=M$, for all such $I$. We will say that  $\Fcal^\vee$ is the {\em class of divisibility} by these ideals. 
\begin{corollary}\label{hercos}
The assignment $\Dcal\mapsto \Dcal^\vee$ restricts to a bijection between the silting classes occurring as classes of divisibility by sets of finitely generated left ideals, and the torsionfree classes in hereditary torsion pairs of finite type.
\end{corollary}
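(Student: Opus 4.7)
The plan is to show that the bijection of Corollary~\ref{duality} between silting classes in $\ModA$ and cosilting classes of cofinite type in $\lmod{A}$ restricts to the stated subclasses on either side; since injectivity is inherited, it suffices to check that $\vee$ exchanges the two families.

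For one direction, let $(\Tcal, \Fcal)$ be a hereditary torsion pair of finite type, with associated Gabriel filter $\Gcal$ having a basis of finitely generated ideals. The paragraph immediately before the corollary already identifies $\Fcal^\vee$ with the class of right $A$-modules $M$ satisfying $MI = M$ for every finitely generated $I \in \Gcal$, which is by definition a class of divisibility by finitely generated left ideals.

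For the converse, suppose $\Dcal = \{M \in \ModA : MI = M \text{ for all } I \in \Scal\}$ is a silting class, with $\Scal$ a set of finitely generated left ideals. For each $I = Af_1 + \cdots + Af_n \in \Scal$, I would dualize the standard projective presentation $A^n \to A \to A/I \to 0$ via $\Hom_A(-, A)$ to obtain a morphism $\sigma_I \colon A \to A^n$ in $\Mor{\projA}$. A short direct computation then shows that $\Hom_A(\sigma_I, M)$ is surjective iff $MI = M$, while dually $\sigma_I \otimes_A X$ is injective iff $\Hom_A(A/I, X) = 0$. With $\Sigma = \{\sigma_I : I \in \Scal\}$ we therefore have $\Dcal = \Dcal_\Sigma$, and Proposition~\ref{dual} yields
\[
\Dcal^\vee = \Fcal_\Sigma = \{X \in \lmod{A} : \Hom_A(A/I, X) = 0 \text{ for all } I \in \Scal\}.
\]

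It remains to verify that $\Dcal^\vee$ is the torsionfree class of a hereditary torsion pair of finite type. Being a cosilting class, $\Dcal^\vee$ is a definable torsionfree class, hence closed under direct limits --- which, for a hereditary torsion pair, is equivalent to finite type. The crucial step, and the main obstacle, is closure under injective envelopes: if $X \in \Dcal^\vee$ and $I \in \Scal$, then the $I$-annihilator $\{y \in E(X) : Iy = 0\}$ is a submodule of the injective envelope $E(X)$ whose intersection with $X$ is zero by hypothesis, and hence is itself zero by essentiality, so $E(X) \in \Dcal^\vee$. Closure under injective envelopes is equivalent to hereditariness of the associated torsion pair, completing the verification that $\vee$ exchanges the two families.
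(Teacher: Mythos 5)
Your overall strategy — reduce to showing that $\vee$ exchanges the two families, read off one direction from the paragraph preceding the statement, and establish the other direction by computing $\Dcal^\vee$ and checking the hereditary and finite-type conditions — matches the structure of what the paper leaves implicit, and the computation of $\Dcal^\vee$ (the dualization of the presentations $A^n\to A\to A/I\to 0$ and the identification $\Dcal^\vee=\{X\in\lfmod{A} : \Hom_A(A/I,X)=0 \text{ for } I\in\Scal\}$) is correct. Finite type also follows correctly from definability.

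However, the verification of hereditariness contains a genuine gap. You assert that the $I$-annihilator $\{y\in E(X):Iy=0\}$ is a submodule of $E(X)$. For a \emph{left} ideal $I$ of a noncommutative ring $A$ this is false: if $Iy=0$ and $a\in A$, then $I(ay)=(Ia)y$, and $Ia\not\subset I$ in general since $I$ is only a left ideal. Consequently the essentiality argument cannot be applied to this set. The standard fix — pass to the image $Ay$ of the nonzero morphism $A/I\to E(X)$, intersect with $X$, and pick $0\neq x=ay\in X$ — yields only that $(I:a)x=0$, and $(I:a)$ need not lie in $\Scal$, so one does not immediately contradict $X\in\Dcal^\vee$. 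Your argument is therefore valid in the commutative case (where the paper in fact appeals to a separate result, [AH, Lemma 4.2], covering exactly this point), but does not establish the corollary over a general ring. Some additional input is needed in direction (b), either by exploiting that $\Scal$ may be enlarged to the full Gabriel filter of the associated hereditary torsion pair and showing that this does not change $\Dcal^\vee$, or by a different argument showing the torsion class ${}^o(\Dcal^\vee)$ is closed under submodules.

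A small secondary point: you invoke Proposition~\ref{dual} to identify $\Dcal^\vee$ with $\Fcal_\Sigma$, but that proposition is stated for a single morphism $\sigma\in\Mor\ProjA$ witnessing the silting class. Passing to a set $\Sigma\subset\Mor\projA$ requires knowing that the dual definable category $\Dcal^\vee=\Fcal_{\Sigma}$ does not depend on the chosen presentation $\Dcal=\Dcal_\Sigma$; this is true but deserves a word of justification (it is the well-definedness of the Auslander--Gruson--Jensen duality at the level of Serre subcategories of the functor category).
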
 

Over a commutative ring,  every cosilting   class of cofinite type arises from a hereditary torsion pair of finite type, because it turns out that every  torsionfree class of the form $\Fcal_\sigma$ for some $\sigma\in\Mor\projA$ is closed under injective envelopes \cite[Lemma 4.2]{AH}. This  yields the following classification result.

\begin{theorem}\cite[Theorems 4.7 and 5.1]{AH}\label{classcomm}
If $A$ is commutative ring, there is a bijection between
	\begin{enumerate}
			\item[(i)] equivalence classes of silting $A$-modules,
			\item[(ii)] hereditary torsion pairs of finite type in $\ModA$.
	\end{enumerate}
	If $A$ is commutative noetherian, there are further bijections with
	\begin{enumerate}
	\item[(iii)] equivalence classes of cosilting $A$-modules,
	\item[(iv)] subsets $P \subseteq \Spec A$ closed under specialisation.
	\end{enumerate}
\end{theorem}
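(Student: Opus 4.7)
My plan is to assemble the theorem from the two previously established bijections---Corollary~\ref{duality} and Corollary~\ref{hercos}---together with a single commutativity-specific input about the classes $\Fcal_\sigma$, and then to handle the noetherian half by combining Theorem~\ref{coftnoe} with the classical Gabriel correspondence.

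For the bijection between (i) and (ii), I would first apply Corollary~\ref{duality}: equivalence classes of silting $A$-modules biject, via $T\mapsto(\Gen T)^\vee$, with cosilting classes of cofinite type in $\ModA$ (left and right modules agree since $A$ is commutative). Corollary~\ref{hercos} further singles out, among those, the duals of silting classes that are classes of divisibility by sets of finitely generated ideals, and identifies them with torsionfree classes of hereditary torsion pairs of finite type. So the theorem reduces to the assertion that over a commutative ring \emph{every} cosilting class of cofinite type is of this form---equivalently, that for every $\sigma\in\Mor\projA$ the class $\Fcal_\sigma$ is closed under injective envelopes. This is the main obstacle, and the only place commutativity really enters. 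My approach would exploit that injective envelopes over a commutative ring decompose as direct sums of indecomposable injectives $E(A/\p)$ with $\p\in\Spec A$, and that formation of the injective envelope is compatible with localisation. Given $X\in\Fcal_\sigma$ and an essential extension $X\subseteq Y$, I would check injectivity of $\sigma\otimes_A Y$ prime-by-prime: since $\sigma$ is a morphism of finitely generated projectives, $\sigma\otimes_A-$ commutes with localisation, and essential extensions do not enlarge the set of associated primes; so reducing to each $A_\p$ and using injectivity of $\sigma_\p\otimes_{A_\p}X_\p$ should yield injectivity of $\sigma_\p\otimes_{A_\p}Y_\p$, and hence of $\sigma\otimes_A Y$ globally. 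This is the content of \cite[Lemma 4.2]{AH}.

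For the noetherian half, Theorem~\ref{coftnoe} directly delivers the bijection between (i) and (iii) since over a noetherian ring every cosilting module is of cofinite type, so the duality $\Dcal\mapsto\Dcal^\vee$ restricts to a bijection between equivalence classes of silting and cosilting modules. To match hereditary torsion pairs of finite type with specialisation closed subsets of $\Spec A$, I would invoke the classical theorem of Gabriel in the commutative noetherian setting: to $(\Tcal,\Fcal)$ assign $P=\{\p\in\Spec A\mid A/\p\in\Tcal\}$, which is specialisation closed; conversely, a specialisation closed set $P$ determines the hereditary torsion class generated by $\{A/\p\mid\p\in P\}$, whose Gabriel filter has a basis of finitely generated ideals by noetherianness and is therefore of finite type. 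The verification that these assignments are mutually inverse is standard, using that every finitely generated $A$-module admits a finite filtration with subquotients of the form $A/\p_i$.
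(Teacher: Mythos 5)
Your proposal follows exactly the same structure as the paper: reduce (i)$\leftrightarrow$(ii) to Corollary~\ref{duality} and Corollary~\ref{hercos}, identify the missing ingredient as the closure of each $\Fcal_\sigma$ under injective envelopes and cite \cite[Lemma 4.2]{AH} for it, then combine Theorem~\ref{coftnoe} with the classical Gabriel correspondence \cite[Ch.~VI, \S 6.6]{Ste} for the noetherian clauses (iii) and (iv). That is precisely the paper's own reduction.

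One caution about your heuristic for \cite[Lemma~4.2]{AH}. The steps you sketch---Matlis decomposition of the injective envelope into a direct sum of copies of $E(A/\p)$, compatibility of injective envelopes with localisation, and control of associated primes under essential extension---all rest on the noetherian hypothesis, whereas the bijection (i)$\leftrightarrow$(ii) is asserted for an \emph{arbitrary} commutative ring. Over a non-noetherian commutative ring, injectives need not decompose into indecomposables, localisation need not preserve injectivity or commute with injective envelopes, and ``associated primes'' must be replaced by weakly associated primes. Moreover, even granting the local reduction, you have not explained why closure of $\Fcal_{\sigma_\p}$ under essential extensions should be easier over $A_\p$: the local-global passage defers rather than dispatches the real difficulty. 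Since you explicitly cite the lemma rather than claim to reprove it, your overall argument for the theorem is still complete, but the sketch you offer would not establish the lemma in the stated generality and should not be presented as if it would.
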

The bijection between (ii) and (iv) is well known, see  \cite[Chapter VI, \S 6.6]{Ste}. 
An explicit construction of a silting module in (i) and a cosilting module in (iii) is provided in \cite{AH}.

\smallskip

We close this section with an example of a cosilting module not of cofinite type.
\begin{lemma}
Let $A$ be a ring, and $S$ a simple module such that $S$ is a finitely generated module over $\End_AS$ and  $\Ext^1_A(S,S)=0$.
Then $S$ satisfies  condition (3) in Theorem~\ref{cosilting} and its dual. In particular, $S$ is a cosilting module. 
\end{lemma}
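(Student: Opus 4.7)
The plan is to reduce everything to semisimple algebra via Jacobson's density theorem, after which both conditions become essentially formal. Let $D=\End_A S$, a division ring by Schur, and let $\bar{A}=A/\Ann S$. Since $_DS$ is finitely generated, density forces the canonical map $A\to\End_D S$ to be surjective, so $\bar{A}$ is simple Artinian with unique simple module $S$, and $\bar{A}\cong S^n$ as an $A$-module, with $n=\dim_D S$. Every $\bar{A}$-module is therefore semisimple of the form $S^{(J)}$. The key consequence is the identification
\[\Cogen S \;=\; \Add S \;=\; \Gen S \;=\; \Prod S,\]
since any $X\in\Cogen S$ embeds into some $S^I$ and is hence a semisimple $\bar{A}$-module, while $S^{(I)}\hookrightarrow S^I$ splits as a map of $\bar{A}$-modules (hence of $A$-modules) by semisimplicity.

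Granted this identification, the Ext-conditions follow cheaply from $\Ext^1_A(S,S)=0$ together with the fact that $\Ext^1_A$ converts coproducts in the first variable and products in the second into products. Namely, $\Ext^1_A(S^{(I)},S)\cong \prod_I\Ext^1_A(S,S)=0$ gives the Ext-injectivity $\Ext^1_A(\Cogen S, S)=0$; and since any $N\in\Prod S$ is a summand of some $S^I$, we get $\Ext^1_A(S,N)$ as a summand of $\Ext^1_A(S,S^I)\cong\prod_I\Ext^1_A(S,S)=0$, yielding the Ext-projectivity $\Ext^1_A(S,\Gen S)=0$.

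It remains to build the approximation sequences. For Theorem~\ref{cosilting}(3), pick an injective cogenerator $E$ and let $W\subseteq E$ denote its $S$-isotypic socle component: then $W\cong S^{(I)}\in\Prod S$, and the inclusion $g\colon S^{(I)}\hookrightarrow E$ is a right $\Cogen S$-approximation because any map from a module in $\Cogen S=\Add S$ to $E$ has semisimple $S$-isotypic image and must land in $W$. The exact sequence $0\to 0\to S^{(I)}\stackrel{g}{\to} E$ then meets all requirements, with $C_0=S^{(I)}$ and $C_1=0$ both in $\Prod S$. For the dual (silting) condition, take the projective generator $P=A$ and the canonical projection $f\colon A\twoheadrightarrow A/\Ann S\cong S^n$: any morphism from $A$ into a module in $\Gen S=\Add S$ kills $\Ann S$ and hence factors through $f$, so $f$ is a left $\Gen S$-approximation, with $T_0=A/\Ann S\in\Add S$ and $T_1=0\in\Add S$. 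The sole nontrivial ingredient is the opening identification of $\Cogen S$, $\Add S$ and $\Prod S$, which uses the $D$-finiteness hypothesis via Jacobson density; thereafter the argument reduces to bookkeeping inside a semisimple subcategory of $\ModA$.
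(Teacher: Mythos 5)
Your proof is correct, and it takes a slightly different route from the paper's, primarily at the opening step. The paper establishes $\Add S = \Prod S$ by citing a general result of Krause and Saor\'{\i}n on modules of finite length over their endomorphism ring; you instead invoke Jacobson density to identify $\overline{A}=A/\Ann S$ as a simple Artinian ring, from which $\Cogen S = \Add S = \Gen S = \Prod S$ all fall out at once since every $\overline{A}$-module is semisimple and $S$-isotypic. Your version is more self-contained (no external reference) and arguably gives a clearer structural picture, at the cost of being specific to simple modules rather than deducing the equality from the general finite-length criterion. The Ext-vanishing via $\Ext^1_A(S^{(I)},S)\cong\prod_I\Ext^1_A(S,S)$ and $\Ext^1_A(S,S^I)\cong\prod_I\Ext^1_A(S,S)$ matches what the paper does implicitly. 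For the approximation sequences you make different but equally valid choices: the paper uses the codiagonal map $S^{(\Hom_A(S,E))}\to E$ (which may have nonzero kernel, but that kernel lands in $\Prod S$ by the semisimplicity argument) and the diagonal map $A\to S^{\Hom_A(A,S)}$, whereas you take the inclusion of the $S$-isotypic socle of $E$ and the surjection $A\twoheadrightarrow A/\Ann S\cong S^n$, both of which make the third term of the relevant sequence zero and so render the kernel/cokernel membership check trivial. The essential mathematical content is the same.
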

\begin{proof}
Since $\End_AS$ is a skew-field, the module $S$ has finite length over $\End_AS$, and therefore $\Add S=\Prod S$, see \cite{KS}. By assumption, $\Gen S=\Add S=\Cogen S$ are contained both in $S^{\perp_1}$ and ${}^{\perp_1} S$. Moreover, if $E$ is an injective cogenerator of $\ModA$, the codiagonal map
$S^{(I)}\to E$ given by $I=\Hom_A(S,E)$ is a right $\Cogen S$-approximation with kernel in $\Prod S$, yielding condition (3) in Theorem~\ref{cosilting}. For the dual condition take the diagonal map 
$A\to S^J$ given by $J=\Hom_A(A,S)$.
\end{proof}
\begin{example}\cite[Example 5.4]{AH}\label{notcoft}
Let $(A,\mathfrak m)$ be a valuation domain whose maximal ideal $\mathfrak m=\mathfrak m^2$ is idempotent and non-zero.
Then $S=A/\mathfrak m$ satisfies the conditions of the Lemma above (indeed, we will see in Example~ \ref{exs2}(2) and Theorem~\ref{epicl}(2) that the idempotency of $\mathfrak m$ implies $\Ext^1_A(S,S)=0$), so it is a cosilting module with $\Cogen S=\Gen S=\Add S=\{M\in\ModA\,\mid\,M\mathfrak m=0\}$.

Notice that $\Cogen S$ does not contain the injective envelope of $S$, and so it does not arise from a hereditary torsion pair. It follows from Theorem~\ref{classcomm} that $S$ is not a cosilting module of cofinite type.
Moreover, despite the fact that $S$ satisfies the statement dual to condition (3) in Theorem~\ref{cosilting}, it is not a silting module. This follows again from Theorem~\ref{classcomm}, because  if 
$\Gen S$ were a silting class, it would  be the class of divisibility by a set of finitely generated left ideals. But the only ideal $I$ with $SI=S$ is $I=A$, which would entail $\Gen S=\ModA$, a contradiction.  
\end{example}

\section{Ring epimorphisms}
We have seen above that silting modules are closely related with localisation at Gabriel topologies, and silting classes are often given by divisibility conditions. In this section, we discuss the connections between silting and ring theoretic localisation. We first recall the relevant terminology.

\begin{definition}\cite[Ch.XI]{Ste}\cite{GL}
A {ring homomorphism} $f:A\rightarrow B$  is a 
 {\em ring epimorphism} if it is an epimorphism in the category of rings with unit, or equivalently, if the functor given by restriction of scalars  $f_\ast:\ModB\rightarrow \ModA$ is a full embedding.
 
Further, $f$ is a 
{\em homological ring epimorphism} if it is a ring epimorphism and $\Tor_i^A(B,B)=0$ for all $i>0$, or equivalently,  the functor given by restriction of scalars  $f_\ast:\D(B)\rightarrow \D(A)$ is a full embedding.

Finally, $f$ is a 
{\em (right) flat ring epimorphism} if it is a ring epimorphism and $B$ is a flat right $A$-module.
\end{definition}

 Two ring epimorphisms $f_1:A\rightarrow B_1$ and  $f_2:A\rightarrow B_2$ are  {\em equivalent} if there is a ring isomorphism $h: B_1\rightarrow B_2$ such that $f_2=h\circ f_1$. We then say that they lie in the same {\em epiclass} of $A$.

\begin{definition}\cite{GL},\cite[Theorem 1.6.3]{I} A full subcategory $\Xcal$ of $\ModA$ is called {\em bireflective} if the inclusion functor $\Xcal\hookrightarrow\ModA$ admits both a left and right adjoint, or equivalently, if it is closed under products, coproducts, kernels and cokernels.

Moreover, a full subcategory $\Wcal$ of $\modA$ is said to be {\em wide} if it is an abelian subcategory of $\modA$ closed under extensions.
\end{definition}

\begin{theorem}\label{epicl} (1) \cite{GdP} There is a bijection between
\begin{enumerate}
\item[(i)] epiclasses of ring epimorphisms;
\item[(ii)] bireflective subcategories of $\ModA$.
\end{enumerate}
It assigns to a ring epimorphism  $f:A\ra B$  the essential image $\Xcal_B$ of the restriction functor $f_\ast:\ModB\hookrightarrow \ModA$.

(2) \cite{BD}\cite[Theorem 4.8]{Sch} The following statements are equivalent for  a ring epimorphism $f:A\ra B$: \begin{enumerate}
\item $\Xcal_B$ is closed under extensions in $\ModA$;
\item $\Tor_1^A(B,B)=0$;
\item the functors $\Ext^1_A$ and $\Ext^1_B$ coincide on $B$-modules;
\item the functors $\Tor_1^A$ and $\Tor_1^B$ coincide on $B$-modules. 
\end{enumerate}

(3) \cite{GdP,GL}, \cite[Theorem 1.6.1]{I} If $A$ is a finite dimensional algebra,  the assignment $f\mapsto \Xcal_B\cap\modA$ defines a bijection between
\begin{enumerate}
\item[(i)] \mbox{epiclasses of ring epimorphisms $f:A\to B$ with $B$  finite dimensional  and $\Tor_1^A(B,B)=0$}; 
\item[(ii)] functorially finite wide subcategories of $\modA$. 
\end{enumerate}
\end{theorem}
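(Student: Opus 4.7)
The statement combines three largely independent classical results, so I would prove each part in turn.

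For part (1), to go from a ring epimorphism $f\colon A\to B$ to a bireflective subcategory, I would check that the fully faithful $f_*\colon\ModB\hookrightarrow\ModA$ has both a left adjoint $-\otimes_A B$ and a right adjoint $\Hom_A(B,-)$; these are well defined thanks to the isomorphism $B\otimes_A B\cong B$, which characterises ring epimorphisms. Fullness of $f_*$ is exactly the definition of $f$ being a ring epimorphism, so $\Xcal_B$ is simultaneously reflective and coreflective, hence bireflective. Conversely, given a bireflective subcategory $\Xcal$ with left adjoint $\ell\colon\ModA\to\Xcal$, I would set $B=\ell(A)$ and transport the ring structure of $A\cong\End_A(A)^{\mathrm{op}}$ through the adjunction $\Hom_A(A,X)\cong\Hom_\Xcal(B,X)$ for $X\in\Xcal$, which identifies $B$ with $\End_\Xcal(B)^{\mathrm{op}}$ as a ring. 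The unit $A\to B$ of the adjunction is then a ring epimorphism with essential image $\Xcal$, and the two assignments are inverse to each other up to equivalence.

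For part (2), the implications (ii)$\Leftrightarrow$(iv) and (ii)$\Leftrightarrow$(iii) come from standard change-of-rings arguments: given a projective resolution $P_\bullet\to M$ of a $B$-module $M$ in $\ModA$, comparing it with the $B$-projective resolution $P_\bullet\otimes_A B\to M$ in $\ModB$ yields an obstruction sequence whose error term is controlled in low degrees by $\Tor_*^A(B,B)$. For (i)$\Leftrightarrow$(iii), I would use the Yoneda interpretation: the comparison map $\Ext^1_B(M,N)\to\Ext^1_A(M,N)$ for $B$-modules $M,N$ is always injective (a $B$-extension is an $A$-extension), and its surjectivity means every $A$-extension of $B$-modules is itself a $B$-module, i.e.~$\Xcal_B$ is closed under extensions in $\ModA$.

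For part (3), in the forward direction, given $f\colon A\to B$ ring epimorphism with $B$ finite dimensional and $\Tor_1^A(B,B)=0$, the subcategory $\Wcal=\Xcal_B\cap\modA$ is abelian by (1), extension-closed by (2), and thus wide. It is functorially finite because the adjoints $-\otimes_A B$ and $\Hom_A(B,-)$ preserve finite dimension (finite dimensionality of $B$ on both sides), so they restrict to give left and right $\Wcal$-approximations in $\modA$. Conversely, given a functorially finite wide subcategory $\Wcal\subseteq\modA$, I would take a left $\Wcal$-approximation $A\to B$ of the regular module, observing that $B$ is finite dimensional and lies in $\Wcal$. The plan is to prove that $\Xcal:=\Xcal_B$, constructed via part~(1) applied to the induced map $A\to\End_\Wcal(B)^{\mathrm{op}}\cong B$, satisfies $\Xcal\cap\modA=\Wcal$. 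Here the wideness (closure under cokernels and kernels inside $\modA$) is what promotes the approximation to a bireflective inclusion, and functorial finiteness is what keeps $B$ finite dimensional; once this is done, $\Tor_1^A(B,B)=0$ follows from part (2) together with extension-closure of $\Wcal$.

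The main obstacle is the backward direction of (3): upgrading a functorially finite wide subcategory $\Wcal$ to an honest bireflective subcategory of $\ModA$ while controlling finite dimensionality of the associated ring $B=\ell(A)$. Wideness is essential to get the \emph{ring epimorphism} property (not merely a ring homomorphism), and functorial finiteness is essential to keep $B$ finite dimensional; weaving these together to realise $\Wcal$ as $\Xcal_B\cap\modA$ for a concrete $B$ is the delicate point, and it is where the hypothesis that $A$ is finite dimensional is indispensable.
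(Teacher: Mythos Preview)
The paper does not supply its own proof of this theorem; it is quoted from the literature (Gabriel--de la Pe\~na, Bergman--Dicks, Schofield, Geigle--Lenzing, Iyama) and used as background. There is therefore nothing to compare against directly, but your outline is the standard one found in those references, and parts (1) and (2) are correctly sketched.

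For part (3), the backward direction you single out as the ``main obstacle'' does hide one point that your sketch elides. A left $\Wcal$-approximation $f\colon A\to B$ only guarantees that $\Hom_A(B,W)\to\Hom_A(A,W)$ is \emph{surjective} for $W\in\Wcal$; for $B$ to be the reflection $\ell(A)$ you also need injectivity. This is where minimality and wideness combine: if $g\colon B\to W$ satisfies $gf=0$, then $f$ factors through $\ker g\in\Wcal$, and the approximation property gives a map $h\colon B\to\ker g$ with $(\iota h)f=f$; if $f$ is chosen \emph{left minimal} (possible since $A$ is finite dimensional), $\iota h$ is an automorphism, forcing $\ker g=B$ and $g=0$. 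You should say ``minimal left $\Wcal$-approximation'' rather than just ``left $\Wcal$-approximation''. Relatedly, the isomorphism $\End_\Wcal(B)^{\mathrm{op}}\cong B$ you invoke is not automatic from $B\in\Wcal$; it is precisely the reflection identity $\Hom_A(B,B)\cong\Hom_A(A,B)\cong B$, so it only becomes available once the previous point is settled.
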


The following result provides a large supply of ring epimorphisms with the properties in Theorem~\ref{epicl}(2).
\begin{theorem}\cite[Theorem~4.1]{Sch}\label{def:universallocalisation}
Let $A$ be a ring and $\Sigma$ be a set of morphisms in  $\Mor\projA$. Then
there is a ring homomorphism 
$f: A\rightarrow A_\Sigma$, called
\emph{universal localisation} of $A$ at
$\Sigma$, such that
\begin{enumerate}
\item[(i)] $f$ is \emph{$\Sigma$-inverting,} i.e.~$\sigma\otimes_A A_\Sigma$ is an isomorphism  for every
$\sigma$ in  $\Sigma$,  and 
\item[(ii)] $f$ is \emph{universal
$\Sigma$-inverting}, i.e.~for any $\Sigma$-inverting morphism $f': A\rightarrow B$
there exists a unique ring homomorphism ${g}:
A_\Sigma\rightarrow B$ such that $g\circ f=f'$.
\end{enumerate}
Moreover, $f\colon A\rightarrow A_\Sigma$ is a ring epimorphism
with  $Tor_1^{A}(A_\Sigma,{A_\Sigma})=0$.
\end{theorem}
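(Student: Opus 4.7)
The plan is to construct $A_\Sigma$ explicitly by generators and relations, to deduce the universal property and the ring-epimorphism property at a formal level, and finally to tackle the vanishing of $\Tor_1^A(A_\Sigma,A_\Sigma)$, which will be the real work.

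\textbf{Construction.} For each $\sigma\colon P\to Q$ in $\Sigma$, choose complements so that $P$ and $Q$ sit as direct summands of free $A$-modules, and represent $\sigma$ (after padding by zeroes) by a matrix $M_\sigma$ with entries in $A$. Let $R$ be the ring obtained from $A$ by freely adjoining, for each such $\sigma$, a matrix $M_\sigma^{-1}$ of formal entries in the appropriate size. Take $A_\Sigma = R/I$, where $I$ is the two-sided ideal generated by the entries of $M_\sigma M_\sigma^{-1} - \mathrm{id}$ and $M_\sigma^{-1} M_\sigma - \mathrm{id}$. The structural map $f\colon A\to A_\Sigma$ is the evident composition.

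\textbf{Universal property.} Each $\sigma \in \Sigma$ becomes invertible after $-\otimes_A A_\Sigma$ by construction. Conversely, given any $\Sigma$-inverting $f'\colon A\to B$, the inverse of $\sigma\otimes_A B$ defines, over $B$, a matrix satisfying the relations imposed on $M_\sigma^{-1}$; these assignments extend uniquely to a ring homomorphism $g\colon A_\Sigma\to B$ with $g\circ f=f'$.

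\textbf{Ring epimorphism.} Consider the two ring homomorphisms $u_1, u_2\colon A_\Sigma \rightrightarrows A_\Sigma\otimes_A A_\Sigma$ given by $x\mapsto x\otimes 1$ and $x\mapsto 1\otimes x$. Their common precomposition with $f$ is $\Sigma$-inverting, so by uniqueness in the universal property applied to the target $A_\Sigma\otimes_A A_\Sigma$ we obtain $u_1 = u_2$; equivalently, the multiplication $A_\Sigma\otimes_A A_\Sigma \to A_\Sigma$ is an isomorphism, which is the characteristic property of a ring epimorphism.

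\textbf{Tor vanishing.} This is the delicate step. For each $\sigma\colon P\to Q$ in $\Sigma$ the two-term complex $P\xrightarrow{\sigma} Q$ is a bounded complex of finitely generated projectives that becomes acyclic after $-\otimes_A A_\Sigma$. The plan is to splice these complexes together, following Schofield, to produce an explicit projective resolution of $A_\Sigma$ as an $A$-module whose differentials remain split-exact after $-\otimes_A A_\Sigma$, which forces $\Tor_1^A(A_\Sigma, A_\Sigma) = 0$. A more conceptual route is via Theorem~\ref{epicl}(2): it suffices to show that the bireflective subcategory $\Xcal_{A_\Sigma}\subseteq \ModA$ is extension closed, which by writing $A_\Sigma$ as a directed colimit of partial localisations inverting finitely many matrices at a time can be reduced to the single-matrix case. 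The principal obstacle is precisely this vanishing statement: the construction of $A_\Sigma$ is non-canonical, no Ore-type flatness is available to exploit, and the interaction between the inverted matrices can be intricate; Schofield's original proof depends on a careful bookkeeping of the two-term complexes associated with each $\sigma$, and it is here that the real content of the theorem lies.
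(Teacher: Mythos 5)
The paper does not prove this statement; it is quoted from Schofield \cite[Theorem~4.1]{Sch}, so there is no internal proof to compare against. Judged on its own terms, your construction, the verification of the universal property, and the coequaliser argument for the ring epimorphism are all correct and standard. But you do not actually prove the $\Tor$-vanishing: you describe a ``plan,'' defer to Schofield's bookkeeping, and acknowledge that this is where the content lies. That is an honest assessment, but it leaves the proposal incomplete precisely where the theorem is non-trivial.

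Two further remarks on the sketch you do give. First, the ``splicing'' idea as you phrase it --- a projective resolution of $A_\Sigma$ over $A$ whose differentials remain (split) exact after $-\otimes_A A_\Sigma$ --- would force $\Tor_i^A(A_\Sigma,A_\Sigma)=0$ for \emph{all} $i>0$, i.e.\ that $f$ is a homological ring epimorphism. This is strictly stronger than what the theorem claims and is false for general universal localisations (higher $\Tor$ need not vanish; the equivalence of ``universal localisation'' and ``homological epimorphism'' is special to hereditary rings, cf.\ Example~\ref{exs2}(6), and fails in general). So the plan, if carried out literally, cannot work. Second, the ``more conceptual route'' you mention is the right one, but the reduction ``one matrix at a time'' is both unjustified and unnecessary: the class $\Xcal_\Sigma=\{X\mid\Hom_A(\sigma,X)\text{ bijective for all }\sigma\in\Sigma\}$ is closed under extensions by a direct application of the five lemma to the two short exact rows $\Hom_A(Q,-)$ and $\Hom_A(P,-)$ (exact since $P,Q$ are projective) applied to $0\to X\to Y\to Z\to 0$. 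What genuinely requires proof --- and is the missing step --- is that the essential image $\Xcal_{A_\Sigma}$ of restriction along $f$ actually \emph{equals} $\Xcal_\Sigma$; only then can one invoke Theorem~\ref{epicl}(2). That identification is the real content, and your proposal does not establish it.
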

Notice that $A_\Sigma$ coincides with the universal localisation at   $\Sigma ^\ast=\{ \sigma^\ast\,\mid\,\sigma\in\Sigma\}\subset\Mor{\Aproj}$. Moreover, the essential image of the restriction functor along $f\colon A\rightarrow A_\Sigma$ consists of the right $A$-modules such that
$X\otimes_A\sigma^\ast$ is an isomorphism  for every $\sigma\in\Sigma$, and it equals $$\Xcal_\Sigma=\{X\in \ModA \,\mid\, \Hom_A(\sigma,X)\ \text{is bijective for all } \sigma\in\Sigma \}.$$

\medskip

We are going to discuss how classes of the latter shape are related with silting modules.

\begin{definition} A module $T_1$ (the reason for this notation will become clear later) is called a {\em partial silting module} if  it admits a projective presentation $ P_{-1}\stackrel{\sigma}{\longrightarrow} P_0\to T_1\to 0$ such that 
$\Dcal_\sigma$ is a torsion class containing $T_1$. \end{definition}
 
Extending \cite[Theorem 2.10]{AIR}, it was shown in  \cite[Theorem 3.12]{AMV1} that every partial silting module $T_1$ can be completed to a silting module ${T}$ which is called a {\em Bongartz completion} of $T_1$ and generates the same torsion class $\Dcal_\sigma$. 
Moreover,  $\Gen T_1\subset \Dcal_\sigma\subset T_1\,^{\perp_1}$, and $(\Gen T_1, T_1\,^o)$ is a torsion pair. Then  $$\Xcal_\sigma=\Dcal_\sigma\cap(\Coker\sigma)^o=\Gen T\cap T_1\,^o.$$ 

As shown in \cite[Proposition 3.3]{AMV2}, the class $\Xcal_\sigma$ is bireflective and extension closed. Thus  it can  be realised as $\Xcal_B$ for some  ring epimorphism $f:A\to B$ as in Theorem~\ref{epicl}(2), and by \cite[Theorem 3.5]{AMV2} the ring $B$ can be described as an idempotent  quotient of  $\End_A{{T}}$.

A ring epimorphism  arising from a partial silting module as above will be called {\em silting ring epimorphism}.
\begin{theorem}\label{silt} \cite{MS2} Every universal localisation is a silting ring epimorphism.\end{theorem}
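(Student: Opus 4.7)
Given the universal localisation $f\colon A\to B:=A_\Sigma$ at $\Sigma\subset\Mor\projA$, the aim is to exhibit a partial silting module $T_1$ with some projective presentation $\sigma'\in\Mor\ProjA$ such that $\Xcal_{\sigma'}=\Xcal_B$ (which coincides with $\Xcal_\Sigma$ by Theorem~\ref{def:universallocalisation}). Since both sides are bireflective subcategories of $\ModA$, this equality will force $f$ and the silting ring epimorphism attached to $T_1$ to lie in the same epiclass by Theorem~\ref{epicl}(1).

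I would first show that $\Dcal_\Sigma$ is a silting class. Packaging $\Sigma$ as the single morphism $\tau=\bigoplus_{\sigma\in\Sigma}\sigma\in\Mor\ProjA$ gives $\Dcal_\tau=\Dcal_\Sigma$. Because each $\sigma\in\Sigma$ has a finitely generated projective source, any map from that source into a coproduct lands in a finite subcoproduct; hence each $\Dcal_\sigma$ is closed under arbitrary coproducts, and so is $\Dcal_\Sigma$. Combined with the automatic closure of such classes under products, extensions and epimorphic images, this makes $\Dcal_\Sigma$ a definable torsion class, and Theorem~\ref{deft} identifies it as a silting class. Fix a silting module $T$ with $\Gen T=\Dcal_\Sigma$; in particular $B\in\Xcal_\Sigma\subseteq\Gen T$.

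Next I would construct $T_1$ and $\sigma'$ directly from $f$. Set $T_1:=\Coker f$, which lies in $\Gen T$ as an epimorphic image of $B\in\Gen T$. Lift a projective presentation $Q_{-1}\xrightarrow{\beta} Q_0\to B\to 0$ along $f$ to a map $\tilde f\colon A\to Q_0$, and define $\sigma':=(\beta,\tilde f)\colon Q_{-1}\oplus A\to Q_0$, whose cokernel is precisely $T_1$. A direct computation shows that $X\in\Xcal_{\sigma'}$ if and only if the map $\Hom_A(B,X)\to X$ induced by $f$ is an isomorphism and $\Ext^1_A(B,X)=0$; together these conditions say that $X$ lies in the essential image of $f_\ast$, i.e.\ $X\in\Xcal_B$. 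This yields $\Xcal_{\sigma'}=\Xcal_B$.

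It remains to verify that $T_1$ is partial silting with respect to $\sigma'$, for which I would show $\Dcal_{\sigma'}=\Gen T$ (a class that already contains $T_1$). Once this identification is in place, $\Dcal_{\sigma'}$ is a torsion class containing $T_1$, so $T_1$ is partial silting, and combining with the identity $\Xcal_{\sigma'}=\Gen T\cap T_1^o$ from the text preceding the statement gives $\Gen T\cap T_1^o=\Xcal_B$, completing the proof. The hardest step is the inclusion $\Gen T\subseteq\Dcal_{\sigma'}$: it uses the vanishing $\Tor_1^A(B,B)=0$ from Theorem~\ref{def:universallocalisation} together with the coproduct closure of $\Gen T$ to ensure that the $\sigma$-divisibility inherited by $X\in\Gen T$ is compatible with the resolution of $B$ used to build $\sigma'$. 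This is precisely where the homological properties of the universal localisation play their decisive role.
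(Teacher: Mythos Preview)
The paper does not supply a proof of this statement; it is quoted from \cite{MS2} without argument. So there is nothing in the text to compare your approach against, and I can only assess the proposal on its own terms.

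Your argument has a genuine gap in the computation of $\Xcal_{\sigma'}$ (and consequently of $\Dcal_{\sigma'}$). Viewing $\sigma'$ as the cone of the chain map $\tilde f\colon A\to\beta$, the triangle $A\to\beta\to\sigma'\to A[1]$ yields for any module $X$ an exact sequence
\[
0\to\Hom_{\D(A)}(\sigma',X)\to\Hom_A(B,X)\xrightarrow{\ f^*\ }X\to\Hom_{\D(A)}(\sigma',X[1])\to\Hom_{\D(A)}(\beta,X[1])\to 0,
\]
where $\Hom_{\D(A)}(\beta,X[1])=\Coker\bigl(\Hom_A(Q_0,X)\xrightarrow{\beta^*}\Hom_A(Q_{-1},X)\bigr)$. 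Hence $X\in\Xcal_{\sigma'}$ iff $f^*$ is bijective \emph{and} $\beta^*$ is surjective, i.e.\ $\Xcal_{\sigma'}=\Xcal_B\cap\Dcal_\beta$. Your ``direct computation'' tacitly replaces $\Coker(\beta^*)$ by $\Ext^1_A(B,X)$; this is only correct when $\beta$ is a monomorphism, that is, when $\pdim{B_A}\le 1$. For an arbitrary universal localisation there is no such bound, and for $X\in\Xcal_B$ the term $\Hom_A(\Ker\beta,X)$ can contribute nontrivially to $\Hom_{\D(A)}(\beta,X[1])$, giving $\Xcal_{\sigma'}\subsetneq\Xcal_B$.

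The same obstruction undermines the ``hardest step'': membership in $\Dcal_{\sigma'}$ forces $X\in\Dcal_\beta$, and nothing links an arbitrary presentation $\beta$ of $B$ to the set $\Sigma$ defining $\Gen T=\Dcal_\Sigma$. The vanishing $\Tor_1^A(B,B)=0$ does ensure $\Ext^1_A(B,X)=0$ for $X\in\Xcal_B$, but it tells you nothing about $\Ker\beta$, nor about modules in $\Dcal_\Sigma\setminus\Xcal_B$. In effect your construction reproduces the special situation of Example~\ref{arising}(3), where $\pdim{B_A}\le 1$ is assumed, rather than the general statement. A correct proof has to produce a presentation $\sigma'$ whose associated class $\Dcal_{\sigma'}$ is \emph{by construction} tied to $\Dcal_\Sigma$, not to an unrelated resolution of $B$.
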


\begin{examples}\label{exs2}
(1)  A module $T_1$ is partial silting with respect to an monomorphic projective presentation $\sigma:P_{-1}{\hookrightarrow} P_0$ if and only if it is partial tilting, and in this case $\Xcal_\sigma=T_1^{\perp_1}\cap T_1\,^o$ is the perpendicular category of $T_1$ studied in \cite{GL,CTT}.

\smallskip

(2)
For any ideal $I$  of $A$, the canonical surjection $f:A\to \overline{A}=A/I$ is a ring epimorphism. Moreover, since $I/I^2\cong\Tor_1^A(\overline{A},\overline{A})$, the ideal $I=I^2$ is idempotent if and only if $\Xcal_{\overline{A}}$ is closed under extensions, in which case $\Xcal_{\overline{A}}=I^o$. 

An important example of an idempotent ideal is provided by the trace ideal  $I=\tau_P(A)$   of a projective right $A$-module $P$. Notice that every idempotent ideal $I$ has the form $I=AeA$  for some $e=e^2\in A$ whenever $A$ is a (one-sided) perfect ring \cite[Proposition 2.1]{Michler}, or when $A$ is commutative and $I$ is finitely generated, in which case $f$ is even a split epimorphism  \cite[Lemma 2.43]{Lam}. Moreover,  every idempotent ideal $I$ with $_AI$ being finitely generated is the trace ideal     of a countably generated projective right $A$-module, see \cite{Whitehead}.

\smallskip

(3) Let us focus on the case when $I=\tau_P(A)$ is the trace ideal of a projective right $A$-module $P$,  and $f:A\to \overline{A}=A/I$. In this case  $\Xcal_{\overline{A}}$ consists of the modules $M$ with $\tau_P(M)=MI=0$, so it is the perpendicular category of the partial tilting module $P$, and $f$ is therefore a silting ring epimorphism.
 In fact, $f$ is even a universal localisation.
This is clear when $P$ is finitely generated (then $f$ is  the universal localisation at $\Sigma=\{ 0\to P\}$). For the general case, one uses the following argument due to Pavel P\v{r}\'ihoda.  First of all, keeping in mind that  every projective module is a direct sum of countably generated projectives by a celebrated result of Kaplansky, we can assume w.l.o.g. that $P$ is countably generated. Then $P$ can be written as a direct limit of a direct system of finitely generated free modules $F_1\stackrel{\alpha_1}{\to} F_2\stackrel{\alpha_2}{\to}F_3\stackrel{\alpha_3}{\to} \ldots$
where each map $\alpha_i$ is given by multiplication with a matrix $X_i$ having its entries in the trace ideal $I$, and moreover, for each $i> 1$ there is a map $\beta_i:F_{i+1}\to F_i$ such that $\beta_{i}\alpha_{i}\alpha_{i-1}=\alpha_{i-1},$
see \cite[Theorem 1.9]{Whitehead}, \cite[Proposition 1.4]{HP}. Now we set $\Sigma=\{ 1_{F_i} - \beta_i\alpha_i\mid i> 1\}$.
Since $f$ is $\Sigma$-inverting,   the universal property in Theorem~\ref{def:universallocalisation} implies that $\Xcal_{\overline{A}}\subset\Xcal_\Sigma$. Conversely, the fact that $A\to A_\Sigma$ is $\Sigma$-inverting entails  $\alpha_i\otimes_A A_\Sigma=0$ for all $i\ge 1$, hence $P\otimes_A A_\Sigma=0$. But then $\Hom_A(P,A_\Sigma)\cong\Hom_{A_\Sigma}(P\otimes_A A_\Sigma, A_\Sigma)=0$, that is, $A_\Sigma$ belongs to $\Xcal_{\overline{A}}$, and so do all $A_\Sigma$-modules.

\smallskip

(4) If $A$ is a semihereditary ring, then all flat ring epimorphisms $A\to B$ are universal localisations \cite[Proposition 5.3]{angarc}, and the converse is true  if $A$ is also commutative \cite[Theorem 7.8]{BS}. Moreover, $\Tor_i^A$ vanishes for $i\ge 2$ (see e.g.~\cite[Theorem 4.67]{Lam}), hence every epimorphism as in Theorem~\ref{epicl}(2) is homological, and its kernel is an idempotent ideal by \cite[Lemma 4.5]{BS}.
 On the other hand, for a semihereditary commutative ring, the fact that every homological epimorphism  is a universal localisation amounts to the validity of the Telescope Conjecture for $\D(A)$, and it  fails in general, see \cite[Section 8]{BS} and the example  below.

\smallskip

(5) Let $(A,\mathfrak m)$ be as in Example~\ref{notcoft}. Then  $f:A\to \overline{A}=A/\mathfrak m$ is a homological ring epimorphism.  But $f$ is not a silting ring epimorphism, and thus not a universal localisation. Indeed, if there were a partial silting module $T_1$ with Bongartz completion $T$ such $\Xcal_{ \overline{A}}=\Gen T\cap T_1\,^o$, then by Theorem~\ref{classcomm} the silting class $\Gen T$ would be the class of divisibility by a set of finitely generated ideals of $A$, and then also by all ideals in the  corresponding Gabriel filter $\Gcal$. Since $\mathfrak m$ is the unique maximal ideal and $\overline{A}\in\Gen T$ is certainly not $\mathfrak m$-divisible, we infer that $\Gcal$ can only contain the ideal $A$. But then $\Gen T=\ModA$, and $T$ and $T_1$ are projective, hence free, thus $\Gen T\cap T_1\,^o=0$ cannot coincide with $\Xcal_{ \overline{A}}$.

\smallskip

(6) 
 If $A$ is a hereditary ring, then  homological ring epimorphisms and universal localisations of $A$ coincide \cite{KSt}. 
 Moreover, by \cite[Theorem 2.3]{Scho},
 there is a bijection between wide subcategories of $\modA$ and universal localisations of $A$, which maps a wide subcategory $\Wcal$ to the universal localisation at (projective resolutions of the modules in) $\Wcal$. Conversely, every $A\to A_\Sigma$ is associated to the wide subcategory $\Wcal$ formed by the $\Sigma$-{\em trivial}  modules, that is, the modules $M\in\modA$ admitting a projective resolution $\sigma:P_{-1}\hookrightarrow P_0$ with $\sigma\otimes_A A_\Sigma$ being an isomorphism.
 
 \smallskip
 
 (7) It follows from (6) that silting ring epimorphisms and universal localisations coincide for  hereditary rings. The same holds true for commutative noetherian rings of Krull-dimension at most one, but it fails already in  Krull-dimension  two, see \cite{MS2,AMSTV}.  In particular,  there is no  analog of Theorem~\ref{deft} for classes $\Xcal_\sigma$: in general, the map $\sigma\in\Mor{\ProjA}$ cannot be replaced by a set $\Sigma\subset\Mor{\projA}$. However, it is  shown in \cite{MS2} that one can always find a set $\Sigma$ of morphisms between {\em countably} generated projective modules such that $\Xcal_\sigma=\Xcal_\Sigma$.
 \end{examples}

\medskip

Our next aim is to investigate the relationship between silting modules and silting ring epimorphisms. We will need the following  construction of silting modules which is dual to Proposition~\ref{constr}.
\begin{proposition}\cite{AMV4}\label{Prop silting}
Let $T$ be a module with a projective presentation $P_{-1}\stackrel{\sigma}{\longrightarrow}  P_0$ such that $\Hom_{\mathsf{D}(A)}(\sigma,\sigma^{(I)}[1])=0$ for any set $I$. Assume there are a cardinal $\kappa$ and a left $\Add T$-approximation $f:A\longrightarrow T^{(\kappa)}$. Then $T\oplus \Coker f$ is a silting $A$-module with silting class $\Gen T$. Moreover, if $A {\longrightarrow}  \,{\sigma^{(\kappa)}} \longrightarrow \omega \longrightarrow$ is the triangle in $\mathsf{D}(A)$ induced by $f$, then $\Coker f$ is a partial silting module with respect to  $\omega$ and 
$\Gen{T}=\Dcal_{\omega}$.
\end{proposition}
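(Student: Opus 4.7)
The strategy dualises Proposition~\ref{constr}: I would form the mapping cone of (a lift of) the approximation $f$ and verify the required silting properties via the long exact sequence coming from the resulting triangle.

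Concretely, I would first lift $f\dd A\ra T^{(\kappa)}$ through the epimorphism $P_0^{(\kappa)}\twoheadrightarrow T^{(\kappa)}$ to a map $\bar f\dd A\ra P_0^{(\kappa)}$, and regard this as a morphism of complexes $\tilde f\dd A^\bullet\ra\sigma^{(\kappa)}$, where $A^\bullet$ denotes $A$ concentrated in degree zero. Let $\omega$ be the cone of $\tilde f$: a 2-term complex of projectives with $P_{-1}^{(\kappa)}\oplus A$ in degree $-1$, $P_0^{(\kappa)}$ in degree $0$, and differential $(\sigma^{(\kappa)},\bar f)$. By construction $H^0(\omega)=\Coker f$, so $\omega$ is a projective presentation of $\Coker f$, and we obtain a triangle $A\ra\sigma^{(\kappa)}\ra\omega\ra A[1]$ in $\D(A)$.

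The main step is to prove $\Gen T=\Dcal_\omega$. Applying $\Hom_{\D(A)}(-,X)$ to the triangle and using $\Hom_{\D(A)}(A,X[1])=0$ as well as $\Hom_{\D(A)}(\sigma^{(\kappa)},X[i])\cong \Hom_{\D(A)}(\sigma,X[i])^\kappa$, one obtains for each $X\in\ModA$ the exact sequence
\[\Hom_A(T^{(\kappa)},X)\stackrel{\Hom(f,X)}{\longrightarrow} X\ra \Hom_{\D(A)}(\omega,X[1])\ra \Hom_{\D(A)}(\sigma,X[1])^\kappa\ra 0.\]
A preliminary observation is that the rigidity hypothesis forces $\Add T\subset\Dcal_\sigma$: applying $\Hom_{\D(A)}(\sigma,-)$ to the canonical triangle $\Ker\sigma^{(I)}[1]\ra\sigma^{(I)}\ra T^{(I)}\ra\Ker\sigma^{(I)}[2]$, together with $\Hom_{\D(A)}(\sigma,M[j])=0$ for any module $M$ and $j\ge 2$ (since $\sigma$ has amplitude $[-1,0]$), the assumption forces $\Hom_{\D(A)}(\sigma,T^{(I)}[1])=0$. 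Since $\Dcal_\sigma$ is closed under epimorphic images (lift a map $P_{-1}\ra N$ through any epimorphism via projectivity of $P_{-1}$), this gives $\Gen T\subset\Dcal_\sigma$. Hence for $X\in\Gen T$ the rightmost term of the displayed sequence vanishes. Moreover, any morphism $A\ra X$ factors through $f$: lift it through an epimorphism $T^{(J)}\twoheadrightarrow X$ by projectivity of $A$, then factor the resulting map $A\ra T^{(J)}$ through $f$ using the left $\Add T$-approximation property. So $\Hom(f,X)$ is surjective, giving $\Hom_{\D(A)}(\omega,X[1])=0$, i.e.~$X\in\Dcal_\omega$. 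Conversely, if $X\in\Dcal_\omega$, the sequence yields that $\Hom_A(T^{(\kappa)},X)\ra X$ is surjective; every element $x\in X$ is then of the form $h_x(f(1))$ for some $h_x\dd T^{(\kappa)}\ra X$, forcing $X\in\Gen T$.

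Once $\Gen T=\Dcal_\omega$ is established, the remaining claims follow quickly. As $\Coker f\in\Gen T=\Dcal_\omega$ and the latter is a torsion class, $\Coker f$ is partial silting with respect to $\omega$. For the silting statement, form the projective presentation $\pi=\sigma\oplus\omega$ of $T\oplus\Coker f$: additivity gives $\Dcal_\pi=\Dcal_\sigma\cap\Dcal_\omega$, which equals $\Dcal_\omega=\Gen T=\Gen(T\oplus\Coker f)$ thanks to the inclusion $\Gen T\subset\Dcal_\sigma$ noted above. The main obstacle is the proof that $\Gen T=\Dcal_\omega$; in particular, the forward inclusion is where the rigidity hypothesis and the left approximation property must interact, since neither alone suffices to kill both factors appearing in the long exact sequence.
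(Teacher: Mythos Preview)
Your proof is correct and is precisely the dualisation of the paper's proof of Proposition~\ref{constr}, which is exactly the route the paper indicates (it states the result as dual to Proposition~\ref{constr} and cites \cite{AMV4} without reproducing the argument). The one small refinement is that your direct use of the truncation triangle $\Ker\sigma^{(I)}[1]\to\sigma^{(I)}\to T^{(I)}\to$ to deduce $\Gen T\subset\Dcal_\sigma$ neatly sidesteps any need for a minimal presentation, whereas the cosilting argument in Lemma~\ref{corigid}(2) relies on minimality of the injective copresentation.
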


Now, if  $T$ is a silting module with respect to a projective presentation $\sigma$, then by \cite{Wei} there is a triangle
\begin{equation}\label{approx tria}
A\stackrel{\phi}{\longrightarrow}\sigma_0\longrightarrow\sigma_1\longrightarrow
\end{equation}
in $\mathsf{D}(A)$, where $\sigma_0$ and $\sigma_1$ lie in $\Add{\sigma}$ and $\phi$ is a left $\Add{\sigma}$-approximation of $A$. Applying the functor $H^0(-)$ to this triangle, we obtain  a left $\Gen T$-approximation sequence for $A$ in $\ModA$  
\begin{equation}\label{approx seq}
A\stackrel{f}{\longrightarrow}T_0\longrightarrow T_1\longrightarrow 0
\end{equation}
where   $T_0,T_1\in\Add T$ and $T_0$ satisfies the assumptions of Proposition~\ref{Prop silting}, so $T_1$ is a partial silting module with respect to $\sigma_1$. 
Moreover,   $f$ is left minimal if so is $\phi$.
\begin{definition} 
Let $T$ be a silting module with respect to $\sigma$. If the map $\phi$ in the triangle (\ref{approx tria}) above can be chosen left-minimal, 
then $T$ is said to be a {\em minimal} silting module. 
\end{definition}
\begin{examples} \label{arising}
(1) If  $A$ is (right) hereditary or perfect, then any approximation sequence as in (\ref{approx seq}) can be lifted to a triangle as in (\ref{approx tria}), and a silting module  is minimal if and only if    the map $f$ in (\ref{approx seq}) can be chosen left-minimal, compare with \cite[Definition 5.4]{AMV2}.

\smallskip

(2) Every finite dimensional silting module over a finite dimensional algebra is minimal.

\smallskip

(3) The following  example extends  a construction of tilting modules from \cite{AS1}.
If $f:A\longrightarrow B$ is a homological ring epimorphism such that $B$ is an $A$-module of projective dimension at most one, then $B\oplus \Coker f$ is a minimal silting $A$-module. This follows immediately from  Proposition~\ref{Prop silting} keeping in mind that $f$ can always be regarded as a  minimal left $\Add B$-approximation of $A$, in fact, it is the $\Xcal_B$-reflection of $A$. Moreover,  $\Coker f$ is  partial silting  with respect to a projective presentation  $\omega$  such that $\Gen B=\Dcal_\omega$, and $f$ is the corresponding silting ring epimorphism, because 
$\Xcal_{\omega}=\Gen B\cap(\Coker f)^o=\Xcal_B$.
\end{examples}

The importance of minimal silting modules is due to the fact that an approximation triangle (\ref{approx tria}) with $\phi$ being left minimal is unique  up to isomorphism, and so is the module $T_1$. We can thus associate to $T$ a uniquely determined silting ring epimorphism $f:A\to B$ 
with $\Xcal_B=\Xcal_{\sigma_1}$.

Notice that the class $\Xcal_{\sigma_1}$ can also be described in a different way. For any torsion class $\Tcal$ in an abelian category $\Acal$, we  consider the subcategory   of $\Acal$ 
\begin{equation}\label{alpha}\mathfrak a(\Tcal):=\{X\in\Tcal: \text{ if } (g:Y\rightarrow X)\in\Mor\Tcal, \text{ then } \Ker(g)\in\Tcal\}\end{equation} studied in  \cite{IT}.  
It turns out that in our situation $\Xcal_{\sigma_1}=\mathfrak a(\Gen T)$, see \cite[Remark 5.7]{AMV2}. In summary:
\begin{proposition}\cite{AMV4}\label{inj1}
There is a commutative diagram 
$$\xymatrix{{\left\{\begin{array}{c}\text{\Small equivalence classes}\\ \text{\Small of minimal}\\ \text{\Small silting $A$-modules} \end{array}\right\}}\ar[rr]^{\alpha}\ar[dr]^{\mathfrak{a}\,\,\,} &  & {\left\{\begin{array}{c}\text{\Small epiclasses of ring} \\ \text{\Small epimorphisms $A\to B$}\\ \text{\Small with $\Tor_1^A(B,B)=0$} \end{array}\right\}}\ar[dl]_{\,\,\,\epsilon}\\ & {\left\{\begin{array}{c}\text{\Small  bireflective }\\ \text{\Small extension-closed} \\ \text{\Small subcategories}\\ \text{\Small of $\ModA$}\end{array}\right\}} &}$$
where $\alpha$ assigns to a  silting module $T$ the associated silting ring epimorphism, the map 
$\epsilon$ is the bijection from Theorem \ref{epicl}(1) and
$\mathfrak{a}$  is  the map defined above.
\end{proposition}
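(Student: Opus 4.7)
The plan is to establish that each of the three maps is well-defined and then observe that commutativity reduces to the identification $\Xcal_{\sigma_1}=\mathfrak a(\Gen T)$ recorded just before the statement. The crux is exploiting the minimality hypothesis on $T$ to pin down the partial silting module $T_1$ up to isomorphism, so that a canonical silting ring epimorphism can be attached to $T$.

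First I would verify that $\alpha$ is well-defined. Starting from a minimal silting module $T$ with projective presentation $\sigma$, by definition the approximation $\phi\colon A\to \sigma_0$ in the triangle (\ref{approx tria}) can be chosen left-minimal. Left-minimality in $\D(A)$ then forces $\sigma_1$, and hence $T_1=H^0(\sigma_1)=\Coker f$, to be unique up to isomorphism. Moreover, if $T'$ is equivalent to $T$ then $\Add T'=\Add T$, so a left $\Add T$-approximation of $A$ is equally a left $\Add T'$-approximation, and the resulting minimal triangles coincide; thus the partial silting module $T_1$ depends only on the equivalence class of $T$. By \cite[Proposition 3.3 and Theorem 3.5]{AMV2}, $T_1$ gives rise to a bireflective extension-closed subcategory $\Xcal_{\sigma_1}$ which is realised as $\Xcal_B$ for a ring epimorphism $A\to B$; by Theorem~\ref{epicl}(2) this epimorphism automatically satisfies $\Tor_1^A(B,B)=0$, and its epiclass is unambiguously determined by $\Xcal_B$ via $\epsilon$. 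This is the silting ring epimorphism $\alpha(T)$.

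Next I would check that $\mathfrak a$ is well-defined. The class $\mathfrak a(\Gen T)$ depends solely on the torsion class $\Gen T$, and hence on the equivalence class of $T$. Bireflectivity and closure under extensions follow from the equality $\mathfrak a(\Gen T)=\Xcal_{\sigma_1}$ recalled from \cite[Remark 5.7]{AMV2}, combined with the fact that $\Xcal_{\sigma_1}$ has these properties because $\sigma_1$ is a projective presentation of the partial silting module $T_1$.

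Finally, commutativity of the diagram is then immediate: unfolding the definitions, $\epsilon(\alpha(T))=\Xcal_B=\Xcal_{\sigma_1}$, while $\mathfrak a(T)=\mathfrak a(\Gen T)$, and the two coincide by the identification above. The main obstacle is really just the first step, namely turning the minimality of the left approximation $\phi$ in the triangulated setting into an isomorphism-invariant choice of $T_1$ at the module level; once this uniqueness is in place, the rest of the proposition is an assembly of results recalled earlier in the section.
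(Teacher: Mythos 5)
Your proposal is correct and follows essentially the same route as the paper: use minimality of $\phi$ to make $T_1$ and hence $\Xcal_{\sigma_1}$ well-defined up to isomorphism, invoke \cite[Proposition 3.3, Theorem 3.5]{AMV2} for bireflectivity and extension-closure, and reduce commutativity to the identification $\Xcal_{\sigma_1}=\mathfrak a(\Gen T)$ from \cite[Remark 5.7]{AMV2}. The one place where you are slightly imprecise is the claim that equivalent silting modules give ``the same'' minimal triangle (the triangles live over $\Add\sigma$, which a priori depends on the chosen presentation, not just on $\Add T$); but this does not matter since the identification $\Xcal_{\sigma_1}=\mathfrak a(\Gen T)$ already shows the resulting epiclass depends only on the torsion class $\Gen T$, hence on the equivalence class of $T$.
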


Furthermore, if $A$ is hereditary, then $T_0$ is a projective generator of $\mathfrak a(\Gen T)$, and the category of projective $B$-modules  is equivalent to $\Add T_0$, see \cite[Proposition 5.6]{AMV2}.  This shows that the epiclass of  $f$ determines $\Add T_0$ and thus also the silting class $\Gen T_0=\Gen T$. Hence  $\alpha$ is an injective map. 
On the other hand, we also have a reverse assignment. According to Example~\ref{arising}(3), to every homological ring epimorphism $f:A\to B$ we can associate the minimal silting module $T=B\oplus \Coker f$, and taking the silting ring epimorphism corresponding to $T$ we recover $f$. 

Combined with the results from \cite{Scho,KSt} explained in Example~\ref{exs2}(6), we obtain

\begin{theorem}\cite{AMV2}\label{her}
Let $A$ be a hereditary ring. There is a bijection between 
\begin{enumerate}
\item[(i)] equivalence classes of minimal silting $A$-modules;
\item[(ii)] epiclasses of homological ring epimorphisms of $A$;
\item[(ii')] epiclasses of universal localisations of $A$;
\item[(iii)] wide subcategories of $\modA$.
\end{enumerate}
Under this bijection,  epiclasses of injective homological ring epimorphisms of $A$ correspond to
 minimal tilting $A$-modules.
 \end{theorem}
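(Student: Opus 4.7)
The strategy is to reduce to a bijection between (i) and (ii) via the map $\alpha$ of Proposition~\ref{inj1}, and to invoke the already established correspondences for the remaining equivalences. Since $A$ is hereditary, $\Tor_i^A=0$ for all $i\ge 2$, so a ring epimorphism $f\colon A\to B$ satisfies $\Tor_1^A(B,B)=0$ exactly when it is homological; hence the codomain of $\alpha$ coincides with set (ii). The bijection (ii) $\Leftrightarrow$ (ii') is the theorem of \cite{KSt} recalled in Example~\ref{exs2}(6), and (ii') $\Leftrightarrow$ (iii) is \cite[Theorem 2.3]{Scho} cited in the same place. So everything reduces to proving that $\alpha$ is a bijection in the hereditary setting.

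For injectivity of $\alpha$, I would rely on the fact from \cite[Proposition 5.6]{AMV2} that over a hereditary ring the middle term $T_0$ of the left approximation sequence (\ref{approx seq}) is a projective generator of $\mathfrak{a}(\Gen T)=\Xcal_B$. Thus the epiclass of the associated ring epimorphism $f$ determines $\Xcal_B$ via Theorem~\ref{epicl}(1), hence $\Add T_0$, and therefore the silting class $\Gen T=\Gen T_0$, which pins down the equivalence class of $T$.

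For surjectivity of $\alpha$, I would invoke Example~\ref{arising}(3): given any homological ring epimorphism $f\colon A\to B$, the $A$-module $B$ has projective dimension at most one because $A$ is hereditary, so $T=B\oplus\Coker f$ is a minimal silting $A$-module, with $f$ itself playing the role of the minimal left $\Add B$-approximation (in fact the $\Xcal_B$-reflection) of $A$. Consequently the associated silting ring epimorphism $\alpha(T)$ has $\Xcal_{\alpha(T)}=\Xcal_B$, so $\alpha(T)$ is equivalent to $f$; combined with injectivity, this yields the bijection (i) $\leftrightarrow$ (ii).

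For the final assertion, I would observe that $T=B\oplus\Coker f$ is tilting precisely when $A\in\Cogen T$, since $\pdim T\le 1$ is automatic in the hereditary case. Because $f$ is the minimal left $\Add T$-approximation of $A$, the faithfulness of $T$ is equivalent to $f$ being injective, i.e.\ to $A$ embedding as a submodule of $B$. The point I expect to need the most care is verifying that in the approximation sequence (\ref{approx seq}) attached to $T=B\oplus\Coker f$ one genuinely has $T_0=B$ and $T_1=\Coker f$, so that $\alpha$ recovers $f$ on the nose and the tilting case is read off from the injectivity of the approximation map; once this identification is pinned down, the tilting correspondence and the full bijection fall into place.
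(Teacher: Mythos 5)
Your proof follows the paper's argument exactly: you reduce to showing $\alpha$ is a bijection, get injectivity from \cite[Proposition 5.6]{AMV2}, surjectivity from Example~\ref{arising}(3), and then attach conditions (ii$'$) and (iii) via \cite{KSt} and \cite{Scho}. The paper is in fact terser — it does not explicitly address the final assertion about injective homological epimorphisms and tilting modules, whereas your observation that faithfulness of $T$ is equivalent to injectivity of the minimal left approximation $f:A\to T_0$, combined with $\pdim T\le 1$ automatic over a hereditary ring, is a correct and welcome elaboration of that point.
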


\begin{example}
If $A$ is a commutative hereditary ring, then every silting module is minimal.
Indeed, by Theorem~\ref{classcomm} every silting module $T$  corresponds to a hereditary torsion pair of finite type, that is, to a  Gabriel filter $\Gcal$ with a basis of finitely generated ideals. Since all ideals are projective, $\Gcal$ is a {\it perfect} Gabriel topology according to  \cite[Chapter XI, Proposition 3.3 and Theorem 3.4]{Ste}, and it induces a flat ring epimorphism $f:A\to B$ satisfying the assumptions of Example~\ref{arising}(3). Then one can show as in \cite[Theorem 5.4]{Hrbek} that $T$ is equivalent to the minimal silting module $B\oplus\Coker f$. 
 
In particular, if $A$ is a Dedekind domain,   there is a bijection between equivalence classes of tilting modules and sets of maximal ideals, and the trivial module 0 is the only silting module that is not tilting, see \cite[Corollary 6.12]{AS1}. 
\end{example}

\section{The lattice of ring epimorphisms}

The partial order  on bireflective subcategories   given by inclusion corresponds under the bijection in Theorem~\ref{epicl}(1) to the   partial order on the epiclasses of $A$ defined by setting   $$f_1\ge  f_2$$  whenever there is a commutative diagram of ring homomorphisms
$$\xymatrix{A\ar[rr]^{f_2}\ar[dr]_{f_1} & & B_2\\ & B_1\ar[ur]_{g} & }$$
Since bireflective subcategories are determined by closure properties,
 the poset induced by $\ge$ is a lattice, and  the ring epimorphisms $A\to B$ with $\Tor_1^A(B,B)=0$ form a sublattice in it by Theorem~\ref{epicl}(2).
 We now compare these posets with other posets recently studied in cluster theory.

\medskip

First of all, observe that
over a finite dimensional algebra,  Theorem~\ref{her} has also a ``small'' version.

\begin{corollary}\cite[Section 2]{IT},\cite[Theorem 4.2]{Ma}\label{IngallsThomas} 
If $A$ is a finite dimensional hereditary algebra, there are bijections between 
\begin{enumerate}
\item[(i)] isomorphism classes of basic support tilting (i.e.~finite dimensional silting) modules;
\item[(ii)] epiclasses of homological ring epimorphisms  $A\to B$ with $B$ finite dimensional;
\item[(iii)] functorially finite wide subcategories of $\modA$.
\end{enumerate}
\end{corollary}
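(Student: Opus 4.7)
The plan is to assemble the corollary by combining the three large-scale bijections already at hand: the hereditary case of Theorem~\ref{her}, the Gabriel--de la Pe\~na--Geigle--Lenzing bijection of Theorem~\ref{epicl}(3), and the observation from Example~\ref{arising}(2) that every finite dimensional silting module over a finite dimensional algebra is automatically minimal. Roughly speaking, Theorem~\ref{her} already yields the ``large'' bijection between minimal silting modules, homological ring epimorphisms (which coincide with universal localisations since $A$ is hereditary), and wide subcategories of $\modA$; we only need to check that the finiteness conditions in (i), (ii), (iii) match up.

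First I would verify that the bijection of Theorem~\ref{her} restricts correctly on the silting/ring-epimorphism side. In one direction, Example~\ref{arising}(2) tells us that a basic finite dimensional silting module $T$ is minimal, so it has an associated homological ring epimorphism $f:A\to B$. By \cite[Theorem 3.5]{AMV2}, $B$ is an idempotent quotient of $\End_A T$, which is finite dimensional since $T$ is; hence $B$ is finite dimensional. In the reverse direction, by Example~\ref{arising}(3), any homological ring epimorphism $f:A\to B$ produces the minimal silting module $T=B\oplus\Coker f$; if $A$ and $B$ are finite dimensional, so is $\Coker f$, and hence so is $T$. Since these two assignments are mutually inverse inside the large bijection, they restrict to a bijection between (i) and (ii).

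Next I would identify (ii) with (iii). Since $A$ is hereditary, $\Tor_i^A=0$ for $i\ge 2$, so a ring epimorphism $f:A\to B$ satisfies $\Tor_1^A(B,B)=0$ if and only if it is homological; thus the class (ii) is exactly the class appearing in Theorem~\ref{epicl}(3) (restricted to finite dimensional $B$). That theorem then provides the bijection with (iii), sending $f$ to $\Xcal_B\cap\modA$.

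The main technical point to be careful about is the functorial finiteness of the associated wide subcategory. Under the composite (i)$\to$(iii), a finite dimensional silting $T$ with approximation sequence $A\to T_0\to T_1\to 0$ gives the wide subcategory $\mathfrak a(\Gen T)\cap\modA = \Xcal_B\cap\modA$ (see \cite[Remark 5.7]{AMV2} together with \cite[Proposition 5.6]{AMV2}); this is exactly the functorially finite wide subcategory produced by Theorem~\ref{epicl}(3) from the finite dimensional ring epimorphism $f:A\to B$. Conversely, a functorially finite wide subcategory yields a finite dimensional $B$ by Theorem~\ref{epicl}(3), and hence a finite dimensional silting module by the construction above. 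This is the step where one must invoke finite dimensionality most carefully, since in the non-finite-dimensional setting of Theorem~\ref{her} the corresponding wide subcategories need not be functorially finite. Combining the two restricted bijections then gives the required commutative triangle between (i), (ii), (iii).
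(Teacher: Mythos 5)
Your plan — combine Theorem~\ref{her}, Theorem~\ref{epicl}(3), and Example~\ref{arising}(2), then match the finiteness conditions — is exactly the ``small version'' the paper has in mind; the paper itself does not spell out a proof, simply citing \cite{IT,Ma} and remarking that the statement is a finite dimensional restriction of Theorem~\ref{her}. Your identification of (ii) with (iii) via the Gabriel--de la Pe\~na bijection and the observation that $\Tor_i^A$ vanishes for $i\geq 2$ over a hereditary ring is clean and correct.

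The one place where your justification differs from what the paper offers elsewhere is the claim that $B$ is finite dimensional. You invoke \cite[Theorem 3.5]{AMV2}, which realises $B$ as an idempotent quotient of $\End_A(\bar T)$ where $\bar T$ is a Bongartz completion of the partial silting module $T_1$; that $\bar T$ is equivalent to your $T$ (hence finite dimensional, with Morita equivalent endomorphism ring) needs a word, though the conclusion that $B$ is finite dimensional still follows. The paper itself provides two more direct routes. In the hereditary setting, the discussion right after Theorem~\ref{her} records that $\text{Proj-}B$ is equivalent to $\Add T_0$ (\cite[Proposition 5.6]{AMV2}), so $B$ is Morita equivalent to $\End_A T_0$, which is finite dimensional whenever $T_0$ is — this is the shortest argument. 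For general finite dimensional algebras, Proposition~\ref{inj} instead passes through \cite[Lemma 3.8]{MS1}: $\gen T$ is a functorially finite torsion class by \cite{AIR}, hence $\mathfrak a(\gen T)=\Xcal_{\sigma_1}\cap\modA$ is a functorially finite wide subcategory, and Theorem~\ref{epicl}(3) then yields finite dimensionality of the codomain. Using either of these would also remove the slight circularity in your last paragraph, where you describe $\Xcal_B\cap\modA$ as ``the functorially finite wide subcategory produced by Theorem~\ref{epicl}(3) from the finite dimensional ring epimorphism'' before you have an independent reason for that wide subcategory to be functorially finite. None of this is a genuine gap — your argument goes through — but it is worth flagging these alternatives since they are cleaner and are the ones the paper actually sketches.
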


In \cite{IT,IgS,R} further bijections are established, providing a combinatorial interpretation of finitely generated silting modules in terms of noncrossing partitions,   clusters, or antichains.
Observe that the poset of noncrossing partitions corresponds to the poset given by $\ge$ on the epiclasses  in condition (ii), and it does not form a lattice in general (unless we relax the condition that $B$ is finite dimensional, as discussed above).

\begin{examples}\label{posets}
(1) Let $A$ be a tame hereditary finite dimensional algebra with a tube $\mathcal U$ of rank $2$, and let $S_1,S_2$ be the simple regular modules in $\mathcal U$. The universal localisation $f_i:A\to A_{\{S_i\}}$ of $A$ at (a projective resolution of) $S_i$ has a finite dimensional codomain  $A_{\{S_i\}}$   for $i=1,2$.
The meet of $f_1$ and $ f_2$, however, is the universal localisation $A\to A_{\{S_1,S_2\}}$  of $A$ at $\mathcal U$ and $A_{\{S_1,S_2\}}$ is an infinite dimensional algebra by  \cite[Theorem 4.2]{CBreg}.
\smallskip

(2) Let $A$ be the Kronecker algebra, i.e.~the path algebra of the quiver $\xy\xymatrixcolsep{2pc}\xymatrix{ \bullet \ar@<0.5ex>[r]  \ar@<-0.5ex>[r] & \bullet } \endxy$ over an algebraically closed field $\mathbb K$. The lattice of universal localisations  has the following shape

\smallskip

$$\xymatrix{&&&& \ Id \ \ar@{-}[lllld]\ar@{-}[llld]\ar@{-}[lld]\ar@{-}[rrrrd]\ar@{-}[rrrd]\ar@{-}[rrd]\ar@<-2ex>@{-}[d]^{\ ...}\ar@<-1.6ex>@{-}[d]\ar@<-1.2ex>@{-}[d]\ar@<1.2ex>@{-}[d]\ar@<1.6ex>@{-}[d]\ar@<2ex>@{-}[d]\\ \lambda_0\ar@{-}[rrrrdddd]&\lambda_1\ar@{-}[rrrdddd]&\lambda_2\ar@{-}[rrdddd] & ... & *+[F]{\{\lambda_x| x\in \mathbb{P}^1_\mathbb{K}\}}\ar@{--}@/_1pc/[dd]^{...}\ar@{--}[dd]\ar@{--}@/_2pc/[dd]^{...}\ar@{--}@/_3pc/[dd]^{...}\ar@{--}[dd]\ar@{--}@/^3pc/[dd]_{...}\ar@{--}@/^2pc/[dd]_{...}\ar@{--}@/^1pc/[dd]_{...} & ... & \mu_2\ar@{-}[lldddd] & \mu_1\ar@{-}[llldddd] &\mu_0\ar@{-}[lllldddd] \\ \\ &&&& *+[F]{\{\lambda_{\mathbb{P}^1_\Kbb\setminus \{x\}}| x\in \mathbb{P}^1_\mathbb{K}\}}\ar@<-2ex>@{-}[d]^{\ ...}\ar@<-1.6ex>@{-}[d]\ar@<-1.2ex>@{-}[d]\ar@<1.2ex>@{-}[d]\ar@<1.6ex>@{-}[d]\ar@<2ex>@{-}[d]\\ &&&& *+[F]{\lambda_{\mathbb{P}^1_\Kbb}}\ar@{-}[d]\\ &&&&0}$$

\smallskip

Here the $\lambda_i$ are the homological ring epimorphisms corresponding to  preprojective silting modules, the $\mu_i$  correspond to preinjective silting modules, and the ring epimorphisms  in frames  are those with infinite dimensional codomain, that is, those given by universal localisation at (projective resolutions of) simple regular modules. The interval between $Id$ and $\lambda_{\mathbb{P}^1_\Kbb}$ represents the dual poset of subsets of the projective line $\mathbb{P}^1_\Kbb$ over $\mathbb K$. 

Up to equivalence, there is just one additional silting module which is not minimal and thus does not appear in the lattice above. It is called Lukas tilting module and it generates the class of all modules without preprojective summands.
 More details are given in \cite[Examples 5.10 and 5.18]{AMV2}.
\end{examples}

\medskip

We have seen in Section 5 that over a hereditary ring the assignment $\alpha:T\mapsto f$ from Proposition~\ref{inj1}   defines a bijective correspondence between minimal silting modules and universal localisations.
Another important  case where $\alpha$ plays a similar role  is established by Marks and Stovicek in \cite{MS1}. 

\begin{proposition}\cite{MS1}\label{inj}
If $A$ is a finite dimensional algebra, there is a commutative diagram of injections
$$\xymatrix{{\left\{\begin{array}{c}\text{\small equivalence classes of} \\ \text{\small finite dimensional}\\ \text{\small silting $A$-modules} \end{array}\right\}}\ar[rr]^{\alpha}\ar[dr]^{\mathfrak a} &  & {\left\{\begin{array}{c}\text{\small epiclasses of}\\ \text{\small universal localisations} \\ \text{\small  $f:A\to B$ with dim\,$B<\infty$} \end{array}\right\}}\ar[dl]^{\epsilon}\\ & {\left\{\begin{array}{c}\text{\small  functorially finite wide  } \\ \text{\small subcategories of $\modA$}
\end{array}\right\}} &}$$
where $\alpha$ assigns to  a silting module $T$ the associated silting ring epimorphism,  $\epsilon$ is the assignment $f\mapsto \Xcal_B\cap\modA$, 
and $\mathfrak a$  is  the map defined in (\ref{alpha}) for the abelian category  $\modA$.
\end{proposition}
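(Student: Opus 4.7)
The plan is to reduce everything to Proposition~\ref{inj1}. Since every finite-dimensional silting module over a finite-dimensional algebra is minimal (Example~\ref{arising}(2)), the assignments $\alpha$, $\mathfrak{a}$, and $\epsilon$ of Proposition~\ref{inj1} are already defined on our inputs, and the commutativity $\mathfrak{a} = \epsilon \circ \alpha$ follows from that diagram. Three things remain to be verified in the refined setting: that $\alpha$ actually lands in epiclasses of universal localisations with finite-dimensional codomain, that $\mathfrak{a}$ lands in functorially finite wide subcategories, and that all three maps are injective.

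For the target of $\alpha$: when $T$ is finite-dimensional silting, the ring $B$ is realised as an idempotent quotient of $\End_A T$ (as explained after Proposition~\ref{inj1}) and is therefore finite-dimensional; moreover, the partial silting module $T_1$ coming from the minimal approximation triangle is finite-dimensional, so it admits a projective presentation $\sigma_1 \in \Mor\projA$ between finitely generated projectives. The universal localisation $A \to A_{\{\sigma_1\}}$ then satisfies $\Xcal_{A_{\{\sigma_1\}}} = \Xcal_{\sigma_1} = \Xcal_B$, so by Theorem~\ref{epicl}(1), $\alpha(T)$ is equivalent to this universal localisation. For the target of $\mathfrak{a}$: from the identity $\Xcal_{\sigma_1} = \mathfrak{a}(\Gen T)$ noted above Proposition~\ref{inj1}, intersecting with $\modA$ gives $\mathfrak{a}(\Gen T) \cap \modA = \Xcal_B \cap \modA$. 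A short check — using that every map from $Y \in \Gen T$ into a finite-dimensional module factors through a finite-dimensional (hence $\gen T$-) submodule of $Y$ — shows this coincides with $\mathfrak{a}(\gen T)$ computed inside $\modA$. By Theorem~\ref{epicl}(3), $\Xcal_B \cap \modA$ is a functorially finite wide subcategory, and this computation simultaneously confirms commutativity of the refined diagram.

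For injectivity, $\epsilon$ is injective on the restricted domain by the bijection in Theorem~\ref{epicl}(3). By the commutativity $\mathfrak{a} = \epsilon \circ \alpha$, injectivity of $\alpha$ and of $\mathfrak{a}$ are equivalent, so it is enough to establish one of them. Suppose $\alpha(T)$ and $\alpha(T')$ are equivalent as ring epimorphisms, so $\Xcal_B = \Xcal_{B'}$. By Theorem~\ref{AIR}, each of $T,T'$ is recovered up to equivalence from the functorially finite torsion class $\gen T$, $\gen T'$, so the task reduces to recovering $\gen T$ from the epiclass of $\alpha(T)$. The plan is to exhibit the partial silting module $T_1$ canonically from the ring epimorphism — for instance via a suitable $\Xcal_B$-reflection sequence for $A$, in the spirit of Example~\ref{arising}(3) — and then to invoke the uniqueness up to equivalence of the Bongartz completion of $T_1$ to recover $\Gen T$, hence $\gen T$.

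The main obstacle is precisely this last reconstruction. Over a hereditary algebra it is the content of Theorem~\ref{her} and Corollary~\ref{IngallsThomas}, where the reverse map from wide subcategories to silting modules is immediate. In the general finite-dimensional setting one no longer has that luxury: one has to identify $T_1$ functorially from the ring epimorphism without using finite global dimension, which is the technically delicate step and the genuinely new input required beyond what Proposition~\ref{inj1} already gives.
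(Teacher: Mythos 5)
Your overall strategy coincides with the paper's: invoke Example~\ref{arising}(2) to see that every finite-dimensional silting module is minimal, feed this into Proposition~\ref{inj1} to get the commutative diagram, verify that the maps land in the refined targets, and deduce injectivity. The target verifications you give are sound --- the observation that $B$ is an idempotent quotient of $\End_A T$, hence finite-dimensional, and that $\sigma_1\in\Mor\projA$ makes $f$ a universal localisation (equivalent to $A\to A_{\{\sigma_1\}}$ by Theorem~\ref{epicl}(1)) agrees with what the paper does; and your reduction of the injectivity of $\alpha$ and $\mathfrak a$ to each other via injectivity of $\epsilon$ is correct.

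The genuine gap is exactly where you flag it: you cannot prove injectivity, and your proposed escape route will not close it. The paper settles injectivity of $\mathfrak a$ by citing \cite[Proposition~3.9]{MS1}, whose mechanism is on the wide-subcategory side rather than the ring-epimorphism side: for a functorially finite torsion class $\Tcal=\gen T$ in the image, one shows that $\Tcal$ is the \emph{smallest} torsion class in $\modA$ containing $\mathfrak a(\Tcal)$ --- this is what the remark after the proposition about \cite[Theorem~3.10]{MS1} is alluding to --- so $\gen T$, hence $T$ up to equivalence by Theorem~\ref{AIR}, is reconstructed directly from $\mathfrak a(\gen T)$. Your attempt to instead recover $T_1$ from the ring epimorphism via an $\Xcal_B$-reflection of $A$ in the spirit of Example~\ref{arising}(3), then Bongartz-complete, relies on the reflection sequence of $A$ producing the relevant partial silting module; that works for hereditary algebras (where $B$ has projective dimension at most one) but there is no reason for it to hold over a general finite-dimensional algebra, and it is not the argument of \cite{MS1}. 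So the structure of your proof is right, but the one nontrivial ingredient you identify as missing really is the content of \cite[Proposition~3.9]{MS1}, and it is proved by a torsion-theoretic argument, not by reconstructing $T_1$ from the ring epimorphism.
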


\begin{proof} If $T$ is a finite dimensional silting module, then it is minimal by Example~\ref{arising}(2), and it is associated to a partial silting module $T_1$ with  projective presentation $\sigma_1\in\Mor{\projA}$. By \cite[Lemma 3.8]{MS1} the functorially finite torsion class $\gen T$ 
  gives rise to  the functorially finite wide subcategory $\mathfrak a(\gen T)=\gen T\cap T_1\,^o=\Xcal_{\sigma_1}\cap\modA$, which is precisely the wide subcategory corresponding  to 
the silting ring epimorphism $f$ associated to $T_1$. Notice that $f$ is the universal localisation of $A$ at $\sigma_1$, and it has a finite dimensional codomain by Theorem~\ref{epicl}(3). Hence $\alpha$ is well-defined, and  the diagram commutes. Finally, $\epsilon$ is obviously injective, and
 $\mathfrak a$  is injective by \cite[Proposition 3.9]{MS1}.
\end{proof}

The image of  the map $\mathfrak a$ in the proposition above is determined in \cite[Theorem 3.10]{MS1}.  It consists of the functorially finite wide subcategories  $\Wcal\subset\modA$ for which the smallest torsion class in $\modA$ containing $\Wcal$ is functorially finite. In particular, $\mathfrak a$ is  bijective whenever all torsion classes are functorially finite. Let us turn to algebras with such property.

\begin{definition}\cite{AIR,DIJ} Let $A$ be  a finite dimensional algebra, and denote by $\tau$ the Auslander-Reiten transpose. A module   $M\in\modA$ is $\tau$-{\it rigid} if  $\Hom_A(M,\tau M)=0$, and it is $\tau$-{\it tilting} if, in addition,  the number of non-isomorphic indecomposable direct summands of $M$ coincides with the number of isomorphism classes of simple $A$-modules.
The algebra  $A$ is $\tau$-{\it tilting finite} if there are only finitely many isomorphism classes of  basic $\tau$-tilting $A$-modules.\end{definition}

We list some characterisations of $\tau$-{tilting finite} algebras from \cite{DIJ}, and we add two  new conditions from \cite{AMV4}.

\begin{theorem} The following statements are equivalent for a finite dimensional algebra $A$.
\begin{enumerate}
\item $A$ is $\tau$-{tilting finite}.
\item There are only finitely many finite dimensional silting $A$-modules up to equivalence.
\item Every torsion class in $\modA$ is functorially finite.
\item Every silting $A$-module is finite dimensional up to equivalence.
\item There are only finitely many epiclasses of ring epimorphisms  $A\to B$ with $\Tor_1^A(B,B)=0$.
\end{enumerate}
In particular, if $A$ is $\tau$-{tilting finite}, all ring epimorphisms $A\to B$ with $\Tor_1^A(B,B)=0$ are universal localisations with a finite dimensional codomain.
\end{theorem}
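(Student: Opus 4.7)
The plan is as follows. The equivalences (1) $\Leftrightarrow$ (2) $\Leftrightarrow$ (3) are established in \cite{DIJ}, using the identification of finite dimensional silting modules with support $\tau$-tilting modules recorded in Example~\ref{exs}(2).

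For (3) $\Leftrightarrow$ (4), the key tool is Theorem~\ref{coftnoe}. Since $A$ is noetherian, equivalence classes of silting right $A$-modules correspond to definable torsion classes in $\ModA$, which in turn correspond bijectively to torsion classes in $\modA$ via $\Tcal\mapsto\Tcal\cap\modA$ with inverse $\varinjlim$. For (3) $\Rightarrow$ (4): given a silting $T$, the torsion class $\Gen T\cap\modA$ is functorially finite by (3), so by Theorem~\ref{AIR} it equals $\gen T'$ for a finite dimensional silting $T'$; then $\Gen T = \varinjlim\gen T' = \Gen T'$, giving $T\sim T'$. Conversely (by contrapositive), a torsion class $\Tcal\subseteq\modA$ which is not functorially finite yields a definable torsion class $\varinjlim\Tcal = \Gen T$ for some silting $T$, and an equivalence $T\sim T'$ with $T'$ finite dimensional would force $\Tcal = \gen T'$ to be functorially finite by Theorem~\ref{AIR}, a contradiction.

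The equivalence with (5) and the \emph{in particular} statement both rest on Proposition~\ref{inj}. Since the map $\alpha$ there is injective into epiclasses of universal localisations of $A$ with finite dimensional codomain, which form a subclass of the epiclasses counted in (5), the implication (5) $\Rightarrow$ (2) is immediate. For the converse, I would prove the in particular claim: every ring epimorphism $f\colon A\to B$ with $\Tor_1^A(B,B) = 0$ is a universal localisation with finite dimensional codomain. Given such $f$, set $\Wcal := \Xcal_B\cap\modA$, a wide subcategory of $\modA$. Under (3) the smallest torsion class containing $\Wcal$ is functorially finite, so by \cite[Theorem 3.10]{MS1} $\Wcal$ lies in the image of $\mathfrak{a}$: Proposition~\ref{inj} produces a finite dimensional silting $T$ for which $g := \alpha(T)\colon A\to B'$ is a universal localisation at a morphism $\sigma_1\in\Mor\projA$, with $B'$ finite dimensional and $\Xcal_{B'}\cap\modA = \Wcal$.

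The main obstacle is to conclude $f\sim g$, equivalently $\Xcal_B = \Xcal_{B'}$. Since $g$ is a universal localisation at $\sigma_1$, the class $\Xcal_{B'} = \Xcal_{\sigma_1}$ is definable (being the kernel of a coherent functor), so over the noetherian $A$ it equals $\varinjlim\Wcal$; combined with the closure of $\Xcal_B$ under direct limits (restriction along $f$ preserves colimits), this gives $\Xcal_{B'}\subseteq\Xcal_B$. The reverse inclusion is the crux: from $\Xcal_{B'}\subseteq\Xcal_B$ one first extracts a canonical $B$-module structure on $B'$ and hence a ring homomorphism $h\colon B\to B'$ with $hf = g$, and then must produce an inverse. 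I would attempt this by verifying that $f$ is itself $\sigma_1$-inverting, whereupon the universal property of $g$ supplies an inverse $k\colon B'\to B$, and the epi property forces $kh = \mathrm{id}_B$ and $hk = \mathrm{id}_{B'}$. The verification of $\sigma_1$-invertibility of $f$ amounts to showing $\Hom_A(\sigma_1,X)$ is bijective for every $X\in\Xcal_B$; this is automatic on $X\in\Xcal_{B'}$, and the hard part will be extending it to all of $\Xcal_B$ via a reduction to finitely presented data using condition (4) and the noetherianity of $A$. With $\Xcal_B = \Xcal_{B'}$ in hand, Theorem~\ref{epicl}(1) delivers $f\sim g$, completing the in particular claim and hence (1) $\Rightarrow$ (5).
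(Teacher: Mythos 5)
The paper offers no proof of this theorem: it attributes the equivalence of (1)--(3) to \cite{DIJ} and the addition of conditions (4) and (5) to \cite{AMV4}, so there is no in-text argument to compare against. Your outline for (1)--(4) is sound and matches what those references would require: (1)$\Leftrightarrow$(2)$\Leftrightarrow$(3) via the identification of finite dimensional silting modules with support $\tau$-tilting modules, and (3)$\Leftrightarrow$(4) via Theorem~\ref{coftnoe} together with the correspondence, over a noetherian ring, between definable torsion classes in $\ModA$ and torsion classes in $\modA$ given by $\Tcal\mapsto\Tcal\cap\modA$ and $\varinjlim$.

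For (5) and the ``in particular,'' however, your argument has a genuine gap that you yourself flag but do not close. Having produced the universal localisation $g\colon A\to B'$ at $\sigma_1\in\Mor\projA$ with $\Xcal_{B'}\cap\modA=\Xcal_B\cap\modA=\Wcal$ and $\Xcal_{B'}\subseteq\Xcal_B$, the remaining task is the inclusion $\Xcal_B\subseteq\Xcal_{B'}$. Since $\sigma_1$ lies in $\Mor\projA$, the functor $\Hom_A(\sigma_1,-)$ preserves products, coproducts and cokernels, so bijectivity of $\Hom_A(\sigma_1,X)$ for all $X\in\Xcal_B$ is equivalent to bijectivity on the single module $B$, i.e.\ to $B\in\Xcal_{B'}=\Xcal_{\sigma_1}$. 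Your proposed ``reduction to finitely presented data'' does not reach this: $\Xcal_B$ is bireflective but not a priori definable, so there is no reason $B$ should be a direct limit (or pure submodule) of modules in $\Wcal$, and conditions (3), (4) and noetherianity offer no obvious mechanism to force $B\in\Xcal_{B'}$. Indeed, establishing $B\in\Xcal_{B'}$ is essentially equivalent to the assertion that $B$ is finite dimensional, which is precisely what the ``in particular'' claims; at this point your sketch becomes circular. Whatever argument \cite{AMV4} uses to rule out ring epimorphisms with infinite dimensional codomain under $\tau$-tilting finiteness, it must go beyond the definability considerations you invoke, and that step is the missing ingredient in your proposal.
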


\smallskip

\begin{theorem}\cite{MS1}\label{finite}
Let  $A$ be a finite dimensional $\tau$-{tilting finite} algebra. Then   
there are bijections between 
\begin{enumerate}
\item[(i)]  isomorphism classes of basic support $\tau$-tilting (i.e.~silting) modules;
\item[(ii)] epiclasses of ring epimorphisms  $A\to B$ with $\Tor_1^A(B,B)=0$;
\item[(ii')] epiclasses of universal localisations of $A$;
\item[(iii)] wide subcategories of $\modA$.
\end{enumerate}
\end{theorem}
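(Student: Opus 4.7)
The plan is to promote the injective maps of Proposition~\ref{inj} to bijections by invoking the characterisation of $\tau$-tilting finite algebras recorded in the preceding theorem. That characterisation supplies three key facts for repeated use: (a) every silting $A$-module is equivalent to a finite dimensional one, (b) every torsion class in $\modA$ is functorially finite, and (c) every ring epimorphism $f\colon A\to B$ with $\Tor_1^A(B,B)=0$ is a universal localisation with $B$ finite dimensional. Fact (c) yields the identification (ii)$=$(ii') at once, while (a) identifies our set (i) with the top-left set of Proposition~\ref{inj}.

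For the bijection (i)$\leftrightarrow$(ii') via $\alpha$, injectivity is already recorded in Proposition~\ref{inj}; I only need surjectivity. Given a universal localisation $f\colon A\to B$ with $B$ finite dimensional, Theorem~\ref{silt} realises $f$ as a silting ring epimorphism, so it arises from some partial silting module $T_1$. Its Bongartz completion $T$ then satisfies $\alpha(T)=f$, and by (a) one may replace $T$ by an equivalent finite dimensional silting module.

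For the bijection (i)$\leftrightarrow$(iii) I would use $\mathfrak{a}$, which is injective by Proposition~\ref{inj}. The image description from \cite[Theorem 3.10]{MS1} recalled in the paragraph after Proposition~\ref{inj}, together with (b), identifies the image of $\mathfrak{a}$ with the collection of \emph{functorially finite} wide subcategories of $\modA$. The residual task, and the chief obstacle, is therefore to verify that under $\tau$-tilting finiteness every wide subcategory of $\modA$ is already functorially finite.

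For this obstacle I would exploit the Ingalls--Thomas-type correspondence between torsion classes and wide subcategories. The assignment $\Tcal\mapsto\mathfrak{a}(\Tcal)$ defined by (\ref{alpha}) is available on all torsion classes in $\modA$. Given an arbitrary wide subcategory $\Wcal\subseteq\modA$, one forms the smallest torsion class $\mathsf{T}(\Wcal)$ in $\modA$ containing $\Wcal$, which is functorially finite by (b), and checks --- using that $\Wcal$ is closed under kernels, cokernels and extensions inside $\modA$, together with the standard analysis of this correspondence in \cite{MS1} --- that $\mathfrak{a}(\mathsf{T}(\Wcal))=\Wcal$. By Theorem~\ref{AIR}, the functorially finite torsion class $\mathsf{T}(\Wcal)$ equals $\gen T$ for a finite dimensional silting module $T$, and then $\mathfrak{a}(T)=\Wcal$ lies in the image of the map $\mathfrak{a}$ from Proposition~\ref{inj}; by the image description above, $\Wcal$ is functorially finite. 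This closes the proof.
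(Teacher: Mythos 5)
Your proposal follows, in outline, the strategy that the paper itself sketches in the paragraphs surrounding Proposition~\ref{inj} and the characterisation theorem for $\tau$-tilting finite algebras; the paper does not give its own proof but cites \cite{MS1}, and your reconstruction is faithful to that sketch. The identification of facts (a), (b), (c), the equality (ii)$=$(ii'), and the recognition that the substantive content lies in promoting the injections $\alpha$ and $\mathfrak a$ to bijections, are all correct. Your treatment of the ``chief obstacle'' is also right: the image description for $\mathfrak a$ refers to \emph{functorially finite} wide subcategories, so condition (b) alone does not finish the argument, and one must invoke the additional fact from \cite{MS1} that $\mathfrak a(\mathsf T(\Wcal))=\Wcal$ holds for every wide subcategory $\Wcal$. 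Combining this with Theorem~\ref{AIR} as you do shows that every $\Wcal$ lies in the image and is therefore functorially finite, which is exactly what is needed.

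The one place where I would push back is the surjectivity of $\alpha$ via Bongartz completion. You assert that if $f$ arises from a partial silting module $T_1$ (by Theorem~\ref{silt}) and $T$ is the Bongartz completion of $T_1$, then $\alpha(T)=f$. But $\alpha$ is defined through the minimal left $\Add T$-approximation sequence $A\to T_0\to T_1'\to 0$, and the partial silting module $T_1'$ appearing there is a priori not the same as the $T_1$ you started with; one must show that the two universal localisations coincide, which amounts to the equality $\Xcal_{\sigma}=\mathfrak a(\Gen T)$ for the projective presentation $\sigma$ of the original $T_1$. This can be argued, but you do not do so, and it is a genuine step. The cleaner route, and the one implicit in the paper, is to avoid a direct surjectivity argument for $\alpha$ altogether: once $\mathfrak a$ is bijective (as you establish) and $\epsilon$ is injective (Theorem~\ref{epicl}(3)), the commutativity $\mathfrak a=\epsilon\circ\alpha$ forces $\epsilon$ to be surjective, hence bijective, and then $\alpha=\epsilon^{-1}\circ\mathfrak a$ is automatically a bijection. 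This renders your Bongartz-completion argument unnecessary and removes the gap.
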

The theorem above applies in particular to all algebras of finite representation type, but also to many representation-infinite  algebras.

\begin{examples}\label{exs3}
(1) \cite[Lemma 3.5]{Ma} If $A$ is a local  finite dimensional algebra, then $A$ and 0 are the  only  silting modules, up to equivalence.

\smallskip

(2) Let $A$ be a preprojective algebra of Dynkin type.
Then  the collections in Theorem~\ref{finite} are further in bijection with the elements of the Weyl group of the underlying Dynkin quiver \cite{Mi}. The  interplay between    combinatorial  and ring theoretic data is used in \cite{Ma2} to describe the algebras arising as universal localisations of $A$ and to determine the homological ring epimorphisms.

\smallskip

(3) The combinatorics of universal localisations is also used in \cite{Ma} to classify silting modules over Nakayama algebras.

\smallskip

(4) Finally, let us remark that the maps $\alpha$ and $\mathfrak a$ in Proposition~\ref{inj} do not preserve the poset structure, see e.g.~\cite[Example 4.8]{Ma} or \cite[Example 5.18]{AMV2}.

\end{examples}

We close this survey with some open questions.

\smallskip

{\bf Question 1.} What is the image of the map $\mathfrak \alpha$ in Proposition~\ref{inj}? Recall that all universal localisations are silting ring epimorphisms by Theorem~\ref{silt}, but in general we do not know how finite dimensional silting modules interact with silting ring epimorphisms having a finite dimensional codomain. 

\smallskip

{\bf Question 2.} Let $T$ be a silting module such that  $A$ has a minimal  left $\Add T$-approximation $A\to T_0$. Is  the module $T_1$ in the sequence $ A\to T_0\to T_1\to 0$   always partial silting?

\smallskip

{\bf Question 3.} In Theorems~\ref{classcomm}, \ref{her}, \ref{finite}, we have seen several instances of the interplay between silting modules  and localisations. Is there a suitable notion of localisation with a  general statement encompassing all these cases?

\end{document}